\title{\bf Deep Reinforcement Learning in Finite-Horizon to Explore the Most Probable Transition Pathway}
\author{\bf\normalsize{
Jin Guo$^{1,}$\footnotemark[2],
Ting Gao$^{1,}$\footnotemark[1],
Peng Zhang$^{1,}$\footnotemark[3],
Jiequn Han$^{2,}$\footnotemark[4],
Jinqiao Duan$^{3,}$\footnotemark[5]
}\\[10pt]
\footnotesize{$^1$ Center for Mathematical Sciences, Huazhong University of Science and Technology, Wuhan 430074, China} \\
\footnotesize{$^2$ Center for Computational Mathematics, Flatiron Institute, New York, 10010, NY, USA}\\
\footnotesize{$^3$ Department of Mathematics and Department of Physics, Great Bay University, Dongguan, Guangdong 523000, China}
}
\date{}
\providecommand{\keywords}[1]{\textbf{\textit{Keywords:}} #1}
\begin{document}
\maketitle

\begin{abstract}
In many scientific and engineering problems, noise and nonlinearity are unavoidable, which could induce interesting mathematical problem such as transition phenomena. This paper focuses on efficiently discovering the most probable transition pathway for stochastic dynamical systems employing reinforcement learning. With the Onsager–Machlup action functional theory to quantify rare events in stochastic dynamical systems, finding the most probable pathway is equivalent to solving a variational problem on the action functional. When the action function cannot be explicitly expressed by paths near the reference orbit, the variational problem needs to be converted into an optimal control problem. First, by integrating terminal prediction into the reinforcement learning framework, we develop a Terminal Prediction Deep Deterministic Policy Gradient (TP-DDPG) algorithm to deal with the finite-horizon optimal control issue in a forward way. Next, we present the convergence analysis of our algorithm for the value function in terms of the neural network's approximation error and estimation error. Finally, we conduct various experiments in different dimensions for the transition problems in applications to illustrate the effectiveness of our algorithm.
\end{abstract}

\keywords{Reinforcement learning, Most probable transition pathway, Onsager–Machlup action functional, Finite-horizon, Optimal control, Terminal prediction.}

\section{Introduction}
\setlength{\parindent}{2em} Stochastic dynamical systems are models for complex phenomena \cite{duan2015introduction} in many scientific and engineering fields, such as chemistry, biology, physics, and mathematical finance \cite{lucarini2019transitions, chung2012experimental, yang2023neural}. Under long-term noise disturbances, a dynamical system may experience enormous fluctuations, which could lead to transitions between metastable states. However, such a noise-induced transition is impossible in deterministic cases. Thus a transition phenomenon between metastable states is an important issue in stochastic dynamical systems, especially in nonlinear cases. 

Both the Onsager-Machlup theory and the Freidlin-Wentzell theory can be utilized to investigate transition phenomena in stochastic dynamical systems. The difference lies in that the Freidlin-Wentzell theory of large deviations \cite{kifer1988random,dembo2009large} is primarily a measure of the probability of rare occurrences in the presence of small noise\cite{heymann2008geometric}. In contrast, the Onsager-Machlup theory can be utilized to characterize the transition phenomena with no constraints on noise intensity. Recall that Onsager and Machlup \cite{onsager1953fluctuations} develop an action functional for diffusion processes with linear drift components and constant diffusion coefficients. Then, Tisza and Manning \cite{tisza1957fluctuations} expand the results of \cite{onsager1953fluctuations} to nonlinear diffusion equations. Next, Hara and Takahashi \cite{hara2016stochastic} extend the Onsager-Machlup functional for Brownian motion on Riemann manifolds by employing probabilistic arguments. The expression of the Onsager-Machlup functional is more general than that of the Freidlin-Wentzell functional and, as recently demonstrated by Br{\"o}cker \cite{brocker2019correct}, the Onsager-Machlup functional is more typical than the other energy functional, thus its minimizer can be rigorously interpreted as the most probable transition path of the stochastic dynamical systems. In addition, Chao and Duan \cite{chao2019onsager} study Onsager-Machlup functional under l\'evy noise and determine the most probable transition path via computing the Euler Lagrangian equation.

\footnotetext[2]{Email: \texttt{jinguo0805@hust.edu.cn}}
\footnotetext[1]{Email: 
 \texttt{tgao0716@hust.edu.cn}}
\footnotetext[3]{Email: \texttt{kazusa\_zp@hust.edu.cn}}
\footnotetext[4]{Email: \texttt{jiequnhan@gmail.com}}
\footnotetext[5]{Email: \texttt{duan@gbu.edu.cn}}

There are some numerical methods for solving the most probable transition pathway for stochastic dynamical systems. In the Onsager-Machlup framework, usually we consider the action functional as the integral of a Lagrangian to discover the most probable transition pathway via the variational principle \cite{durr1978onsager}. Hu et al. \cite{hu2022onsager} use machine learning techniques to determine the most probable transition pathway with the Euler-Lagrange equation. For small noise, Heymann and Vanden-Eijnden \cite{heymann2008geometric} construct the geometric minimum action method (gMAM) to solve for the most probable transition path. The core of the method is to pre-define a spatial curve and then lead the curve to converge to the optimal path according to certain convergence rules (difference equations). However, in some scenarios, the rate functional cannot be explicitly represented by pathways in the neighborhood of the reference orbit. To address this issue, Wei et al. \cite{wei2022optimal} discover the transition path by converting it into an optimal control problem and developing a neural network to solve it under small noise. However, this method has a high computational cost to train the neural networks. Alternatively, Chen et al. \cite{chen2023data} utilize physics-informed neural networks (PINNs) to compute the most probable transition pathway by solving the Euler-Lagrange equation. Li et al. \cite{li2021machine} propose a machine learning framework to determine the most probable pathways and address the shortcoming of the shooting method by a reformulation of the boundary value problem associated with a Hamiltonian system. Then, Chen et al. \cite{chen2023detecting} tackle the optimal control problem for detecting the most probable transition pathway with Pontryagin's Maximum Principle, and employ neural networks to approximate the optimal solution. 

Reinforcement learning is a natural choice for solving optimal control problems. For example, Zhou et al. \cite{zhou2021actor} present a numerical approach for solving high-dimensional Hamilton-Jacobi-Bellman (HJB) type elliptic partial differential equations (PDE). They employ the $It\hat{o}$ formula to the value function and combine it with the HJB equation to update the critic network and optimize the actor network by minimizing the cost function. Inspired by these, we can take the Onsager-Machlup action functional as the running cost, and treat the terminal prediction loss as the terminal cost.
 
 Reinforcement learning algorithms can be classified into two types: model-free algorithms and model-based algorithms. On the one hand, the model-free algorithm does not require the model of the environment and interacts directly with the real environment to obtain feedback. Mnih et al. \cite{mnih2013playing} propose the Deep Q-network (DQN), which is based on Q-learning and CNN network structure and utilizes Q-network rather than Q-table to choose the action that maximizes Q value in a given state. Since the DQN algorithm tends to overestimate the Q-value, Hasselt et al. \cite{van2016deep} propose the Double DQN algorithm, in which another Q-network is introduced to make decisions, so that the original Q-network only estimates the Q-value without making decisions. Silver et al. \cite{silver2014deterministic} propose the Deterministic Policy Gradient (DPG) algorithm to choose a deterministic action for the current state. It can be shown theoretically that the gradient of the deterministic policy equals the expectation of the Q-function's gradient and the deterministic policy is more efficient than the stochastic one. Lillicrap et al. \cite{lillicrap2015continuous} propose Deep Deterministic Policy Gradient (DDPG) algorithm based on DPG \cite{silver2014deterministic} by altering the Q function to Q-network. Moreover, they add a target Q network and a target policy network to improve the stability of their algorithm. Schulman et al. \cite{schulman2015trust} present the Trust Region Policy Optimization (TRPO) method, which provides a monotonic approach to policy improvement in order to assure that the new policy is not worse than the original policy. On the other hand, model-based algorithms require the simulation of the environment and obtaining feedback via interaction with the simulated environment \cite{huang2023model}. Prior to executing an action, the agent has the capability to generate predictions regarding the subsequent state and corresponding rewards. There are some typical methods such as Dyna \cite{sutton1991dyna}, Model-based policy optimization (MBPO) \cite{janner2019trust}, Model-based value expansion (MVE) \cite{feinberg2018model}, etc

Some efforts have been made to seek transition pathways via reinforcement learning in real-world applications. For example, Zhang et al. \cite{zhang2021deep} formulate the transition of a chemical reaction as a game and improve the success rate of the game to be higher than 10\% with their algorithm, which is similar to imitation learning. They update the critic network and policy network by determining the gradient of contrastive loss and minimizing the KL divergence of the target distribution and the expert-guided sample distribution respectively. Inspired by Boltzmann Generator \cite{noe2019boltzmann}, Liu et al. \cite{liu2022pathflow} utilize the normalizing flow method to establish the relationship between the hidden space and the sampling space. They evaluate the learned invertible function via the Minimum Energy Path and the Minimum Free Energy Path in order to map the linear interpolation paths in the hidden space to physically meaningful transition paths. Nagami and Schwager \cite{nagami2021hjb} estimate the pathway and minimum time for a quadrotor to pass through 12 doors employing a combination of supervised learning and reinforcement learning. They initialize the reinforcement learning neural network by employing the network parameters trained by supervised learning with the help of the HJB equation. Rose et al. \cite{rose2021reinforcement} attempt to discover an alternative sampling dynamic to obtain rare trajectories in an optimal way, and show that the formulation of regularized reinforcement learning issue is equivalent to finding the dynamics of optimal sampling rare trajectories. In our case, the transition pathway needs to reach the target state in a finite time. This motivates us to deal with transition pathway challenges by extending a model-free reinforcement learning method to a finite horizon.

There are some research works on finite-horizon issues with reinforcement learning \cite{lei2020dynamic}. Vivek and Bhatnagar \cite{vp2021finite} propose a Q-learning algorithm to address finite-horizon problems, and analyze its stability and convergence. While we add terminal prediction into the DDPG algorithm and design terminal loss as a regularization condition. Since the Hamilton-Jacobi-Bellman(HJB) equation for the finite horizon optimal control problem is time-varying, Zhao and Gan \cite{zhao2020finite} come up with a circular fixed-finite-horizon based on reinforcement learning method to solve the time-varying HJB equation. Furthermore, they experimentally verify the effectiveness of their algorithm for the finite-horizon optimal control problem of continuous-time uncertain nonlinear systems. Instead, we employ actor network to choose actions instead of assuming the optimal action has an explicit expression. Moreover, Hur\'e et al. \cite{hure2021deep} propose several algorithms based on deep learning for solving discrete finite-horizon control problems and provide convergence analysis and numerical experiments \cite{bachouch2022deep}. On the contrary, we focus on solving reinforcement learning in a forward way, which does not need to have a pre-assumed distribution on the previous transition state.

To summarize, we turn the challenge of exploring the transition path between two metastable states into a finite-horizon optimal control problem with the Onsager-Machlup functional and terminal cost minimization. The following are our primary contributions in this paper:

\begin{itemize}

\item[$\bullet$] We design a new DDPG-based algorithm, called Terminal Prediction Deep Deterministic Policy Gradient (TP-DDPG) through integrating terminal prediction with reinforcement learning for solving the finite-horizon optimal control problem in a forward way.

\item[$\bullet$] We present the convergence analysis of TP-DDPG algorithm in terms of estimation error and approximation error, as well as a convergence study of the Q value critic network and the terminal loss predicted by actor network.

\item[$\bullet$] We conduct various experiments with stochastic dynamical systems in three different dimensions to obtain the most probable transition pathways, and observe some consistency between our numerical solution and the convergence theory. Besides, our algorithm is efficient and requires lower computational cost.

\end{itemize}
The paper is arranged as follows. In section 2, we briefly introduce the Onsager-Machlup theorem and formulate the problem of minimizing the Onsager-Machlup action functional to an optimal control problem. In section 3, we set up our algorithm and present a flowchart to explain the framework of our algorithm. In section 4, we perform some convergence analysis of the algorithm TP-DDPG. In section 5, we present some numerical experiments and compare our results with the solution of the Euler-Lagrange equation. Finally, we conclude the paper with discussions in section 6.

\section{Mathematical Setup}
\setlength{\parindent}{2em} Consider the following stochastic differential equation
\begin{equation}
\begin{cases}
     dX(t)=b(X(t))dt+ \sigma(X(t)) dB(t)\,,\\
    X(0)=X_0\,, \label{sde}
\end{cases}
   \end{equation}
for $t\in [0,T]$, where $b:\mathbb{R}^d \rightarrow \mathbb{R}^d$ is a drift function, $\sigma:\mathbb{R}^d \rightarrow \mathbb{R}^{d\times u}$ is a $d\times u$ matrix-valued function and $B$ is a Brownian motion in $\mathbb{R}^u$. The well-posedness of the stochastic differential equation (\ref{sde}) has been extensively studied \cite{karatzas1991brownian}. Here we are interested in the transition phenomena between two metastable states.

In the following, we first introduce some theories on Onsager–Machlup action functional and optimal control.

\subsection{Onsager–Machlup Action Functional}

\setlength{\parindent}{2em} To investigate transition phenomena, the probability of the solution trajectories in a small tube should be estimated. When the noise intensity is a constant, the Onsager-Machlup action functional can be employed to quantify the tubular domain surrounding a reference orbit $z(t)$ on $[0,T]$. The Onsager-Machlup theory of stochastic dynamical systems (\ref{sde}) gives an estimation of the probability
\begin{equation}
  \begin{aligned}
    \mathbb{P}\{\|X-z\|_T\leq\delta\}\propto C(\delta, T)exp\{-S^{OM}_T(z,\dot{z})\}\\
=\mathbb{P}^{x_0}\{\|B^{\varepsilon}_t-x_0\|_T\leq \delta\}exp\{-S^{OM}_T(z,\dot{z})\},\label{transiton probability}
\end{aligned}
\end{equation}
where $\delta$ is positive and sufficiently small, $\mathbb{P}^{x_0}$ denotes the probability under the condition of initial position $X(0)=x_0 $, $\|\cdot\|_T $ is defined as the uniform norm depending on $T$:
$$\|\varphi(t)\|_T:=\sup_{0\leq t\leq T}|\varphi(t)|,$$
and $B^{\varepsilon}_t=\varepsilon (B_t-B_0)+x_0$ (a shifted Brownian motion with magnitude $\varepsilon$). The Onsager-Machlup action functional is
\begin{equation}
    S^{OM}_T(z,\dot{z})=\frac{1}{2}\int_0^T \big\{[\dot{z}-b(z)][\sigma\sigma^T]^{-1}[\dot{z}-b(z)]+\nabla\cdot b(z)\big\}dt. \label{S^OM}
\end{equation}

Equation (\ref{transiton probability}) indicates that for a given transition time $T$, the most probable transition path is the smooth path that maximizes the probability near the tube of the reference orbit. Therefore, determining the most probable transition path could employ the calculus of variations to find the path $X_t$ that minimizes the Onsager-Machlup action functional, which is also called the principle of least action. We can write the corresponding Lagrangian to be
\begin{equation}    
L(z,\dot{z})=\frac{1}{2} \big\{[\dot{z}-b(z)][\sigma\sigma^T]^{-1}[\dot{z}-b(z)]+\nabla\cdot b(z)\big\}.\label{lagrangian}
\end{equation}
In the classical calculus of variations framework, the corresponding Euler-Lagrange equation is
$$\frac{d}{dt}\frac{\partial}{\partial \dot{z}}L(z,\dot{z})=\frac{\partial}{\partial z}L(z,\dot{z}).$$

\subsection{Formulation of Optimal Control Problem}

\setlength{\parindent}{2em} Consider the following ordinary differential equation (ODE)
\begin{equation}
    \begin{cases}
             \dot{X}(t)=b(t, X(t))+ \sigma(X(t)) u(t),\,\,\,\,\,t\in [0,T]\,,\\ 
X(0)=X_0\in \mathcal{X}\,,\label{ode}   
    \end{cases}
\end{equation}
where $X(0)$ is the initial state, and the map $u(\cdot):[0,T] \rightarrow R^{u}$ is called a control. Here state space $\mathcal{X}$ is an open, connected and compact set in $\mathbb{R}^d$, and the control space $\mathbb{A}$ is a convex closed domain in $\mathbb{R}^{u}$. 

We then consider an optimal control problem to minimize the following cost functional
$$
\mathcal{J}[X,u]=\int_0^T f(t, X(t),u(t))dt+g(X(T))\,,
$$
where $f:[0,T]\times \mathcal{X}\times \mathbb{A}\rightarrow \mathbb{R}$ is called the running cost, $g: \mathcal{X}\rightarrow  \mathbb{R}$ is called the terminal cost.

The minimization for the Onsager–Machlup action functional (\ref{S^OM}) can be reformulated as the following constrained optimal control problem:
\begin{equation}
    \begin{cases}
        \mathop{\mbox{inf}}\limits_{u\in \mathbb{A}}\,\,\,\, \mathcal{J}[X,u]=\int_{0}^T f(t, X(t),u(t))dt+g(X(T))\,,\\
\mbox{subject\,\,to}\,\,\dot{X}(t)=b(X(t))+\sigma(X(t)) u(t)\,,\\ 
\quad\quad\quad\quad X(0)=X_{0}\,,\label{optimal control problem}
 \end{cases}
\end{equation}
where $t=0$ is the initial time and $X_{0}$ is the initial state. Here $X(t):[0,T]\rightarrow \mathcal{X}$ is the state and $u(t):[0,T]\rightarrow \mathbb{A}$ is the control. From (\ref{S^OM}) and (\ref{ode}), the running cost $f(t, X(t),u(t))$ becomes
\begin{equation}
    f(t, X(t),u(t))=\frac{1}{2}[|u(t)|^2+\nabla\cdot b(X(t))]\,.\label{OM functional}
\end{equation}
The terminal cost $g(x)$ measures the discrepancy between the learned terminal state $x$ and the target terminal state $X_T$. We define it as the $L_2$-norm Euclidean distance:
\begin{equation} 
g(x)=\lambda\|x-X_T\|_2,
\end{equation}
where $\lambda$ is a scaling parameter.

The optimal orbit $X^*$ of the problem (\ref{optimal control problem}) is the most probable transition path from the metastable state $X_{0}$ to the metastable state $X_T$ within a given time $T$ for the system (\ref{sde}). Next, we introduce reinforcement learning to solve the control problem (\ref{optimal control problem}) directly.

\section{Reinforcement Learning Framework}

\subsection{Definition}

\setlength{\parindent}{2em} To properly formulate the preceding optimal control issue (\ref{optimal control problem}) as a reinforcement learning problem, we first give the basic definition of reinforcement learning. The process of reinforcement learning involves the agent's interaction with the environment. At each time step, the agent chooses an action $a_t$ based on the current state $s_t$ of the environment. The environment, in turn, provides feedback to the agent based on the selected action $a_t$. Following this interaction, the environment transitions to a new state $s_{t+1}$, and the agent receives a reward $r_t$. We divide time $T$ into $N$ time intervals, then we have $\Delta t=\frac{T}{N}$. For the optimal control problem (\ref{optimal control problem}) of obtaining the most probable transition path, the quadruplet of reinforcement learning is defined as follows,

\begin{itemize}

\item \textbf{State space}: The state $s_t$ is defined as the coordinates of $X(t)\in\mathbb{R}^d$ at timestep $t$.

\item \textbf{Action space}: The action space is defined as $U \subset \mathbb{R}^{u}$($a_t\in U$). At timestep $k$ ($k=0,1,\cdots,N$), $a_{k}$ corresponds to the control term $u(k\Delta t)$ in (\ref{optimal control problem}).

    \item \textbf{Cost (Reward)}: In the domain of reinforcement learning, the prevalent convention is to employ the reward rather than the cost. Consequently, the objective is typically formulated as the maximization of the cumulative reward, as opposed to the minimization of the cost. It is evident that these two perspectives are fundamentally equivalent, differing only in their sign conventions. In our study, we choose the notation cost instead of reward, because that not only aligns more closely with the control literature but also better conforms to our specific problem (\ref{optimal control problem}). For consistency with the optimal control problem (\ref{optimal control problem}), we will call the cost as running cost in the following. The running cost $r_t$ at timestep $t$ is defined as
    $$r_t=f(t, s(t),a(t))\Delta t.$$
    Our goal is to minimize the summation of running cost and terminal cost.
    \item \textbf{Transition dynamics}: The transition dynamics corresponding to the optimal control problem (\ref{optimal control problem}) can be expressed as $$s_{t+1}=s_t+b(s_t)\Delta t+ \sigma(s_t) a_t\Delta t\,.$$
\end{itemize}

\begin{figure}[ht]
    \centering
    \includegraphics[scale=0.7]{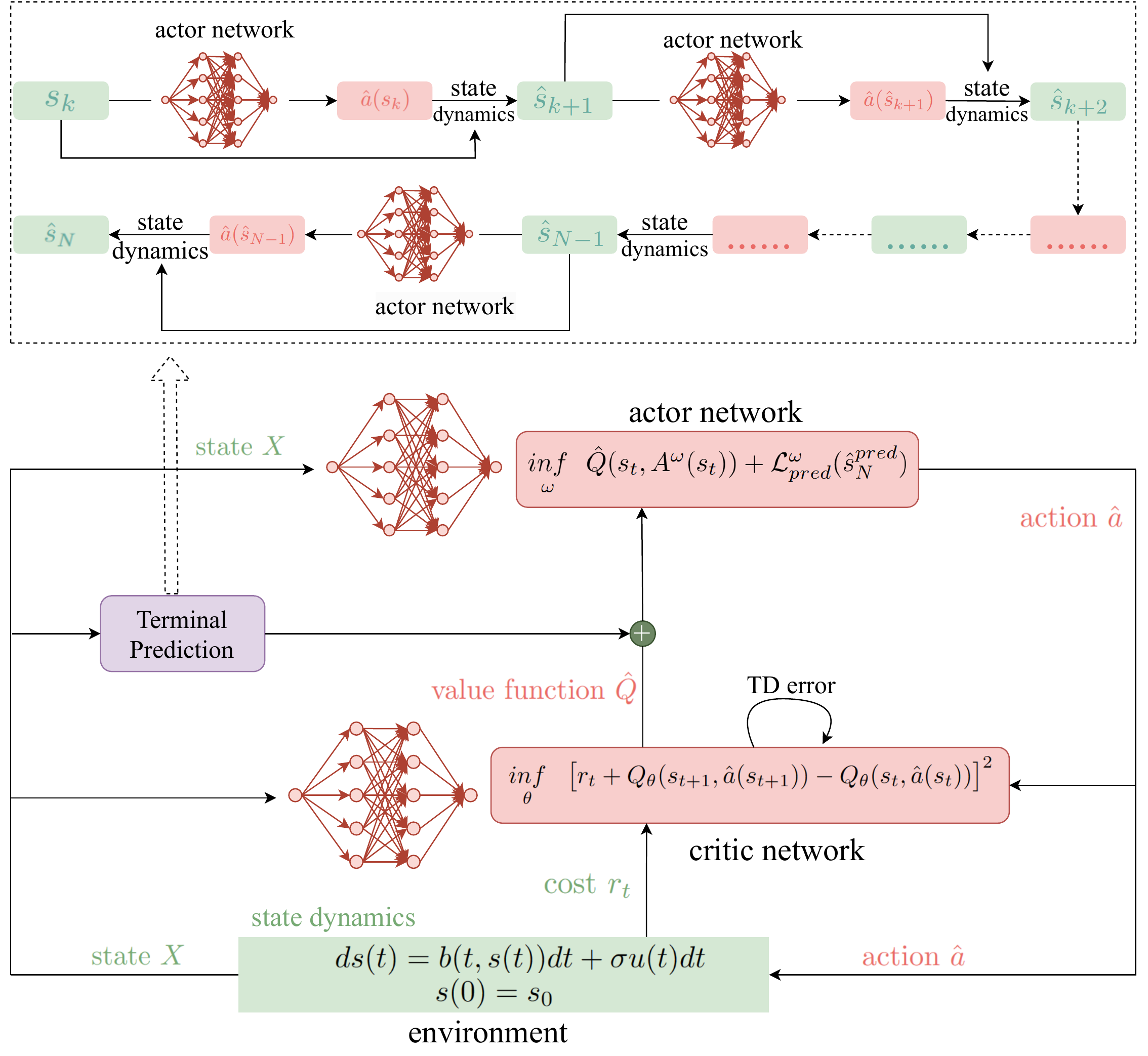}
    \caption{Flow chart of TP-DDPG.}
    \label{flow chart}
    \end{figure}

\subsection{Terminal Prediction}
In order to deal with the finite-horizon issue, we propose terminal prediction to tackle the difficulty in a forward way. As the name implies, the purpose of terminal prediction is to predict the state $\hat{s}_N^{pred}$ after $N-k$ timesteps with the help of the current actor network when the state $s_k$ of the $k$-th timestep is given. The loss of terminal predict is characterized as $\mathcal{L}_{pred}=\lambda\|\hat{s}_N^{pred}-X_T\|_2$, where $X_T$ is the target terminal point. Subsequently, we provide an elaboration on the concept behind the terminal prediction.

The $k$-th timestep state $s_k$ is input to the current actor network to yield $a_k=\hat{a}(s_k)$. Then $(s_k,\hat{a}(s_k))$ interact with the environment, resulting in the predicted next state $\hat{s}_{k+1}$ being determined by the dynamical equation (\ref{ode}). By Eulerian discretization of the state dynamics (\ref{ode}), we denote it as 
\begin{equation}
    X_{t+1}=F(X_t,u_t), \label{state dyn}
\end{equation}
then we have $\hat{s}_{k+1}=F(s_k,\hat{a}(s_k))$. Next, putting the state $\hat{s}_{k+1}$ into the current actor network to obtain $\tilde{a}_{k+1}=\hat{a}(\hat{s}_{k+1})$, then we get $\hat{s}_{k+2}=F(\hat{s}_{k+1},\tilde{a}_{k+1})=F\Big(F(s_k,\hat{a}(s_k)),\hat{a}\big(F(s_k,\hat{a}(s_k)\big)\Big)$ via the dynamic equation. Following $N-k$ timesteps, we get
$$\hat{s}_N^{pred}:=\hat{s}_N^{k,\hat{a}}=\underbrace{F\circ F \circ \cdots \circ F}_{N-k}(s_k,\hat{a}(s_k))\,,$$
where $\hat{s}_N^{k,\hat{a}}$denotes the predicted state $\hat{s}_N$ at time step $N$ derived from $s_k$ by the current actor network $\hat{a}$. In this manner, we obtain the predicted terminal state $\hat{s}_N^{k,\hat{a}}$ starting from timestep $k$ based on the current actor network. By predicting the terminal state at each step, the terminal loss compels the agent to approach the target terminal point. This prompts the actor network to update in a manner that minimizes the terminal cost, ultimately enabling the agent to successfully reach the target terminal point. We illustrate the process of terminal prediction in the upper segment of Figure \ref{flow chart}.

\subsection{Terminal Prediction Deep Deterministic Policy Gradient}

\setlength{\parindent}{2em} For the transition problem of stochastic dynamical systems, the state space and action space are continuous, Deep Deterministic Policy Gradient (DDPG) algorithm is a better candidate than other classical reinforcement learning algorithms to tackle the problem of discovering the most probable transition pathway.

Some existing reinforcement learning algorithms including DDPG are designed for the infinite problem. In our case, updating via running cost $r_t$, for $0\leq t \leq T-1$, $\hat{Q}(s_t,a_t)$ on finite horizon $[0,T]$ is the approximation of $$\sum_{k=t}^{T-1}r(s_k,a_k)\approx\int_t^Tf(X(\tau),u(\tau))d\tau\,.$$ 
In this situation, we define the $\hat{Q}_N=0$. If we solely rely on the Q-value to guide the update of the actor network, we would be unable to determine the optimal action that satisfies the optimal control problem (\ref{optimal control problem}). Because the cost  $\mathcal{J}[X_t,u_t]=\int_t^Tf(X(\tau),u(\tau))d\tau+g(X(T))$ is inconsistent with $\hat{Q}(s_t,a_t)$. In order to address the issue, we make some modifications to the DDPG algorithm with the help of terminal prediction and propose the new algorithm \ref{algorithm} called Terminal Prediction Deterministic Policy Gradient (TP-DDPG).

\begin{algorithm}
\caption{Terminal prediction Deep Deterministic Policy Gradient}\label{algorithm}
\KwIn{Randomly initialize critic network $Q_{\theta}(s,a)$ and actor network $A_{\omega}(s)$ with weights $\theta$ and $\omega$.}
\KwOut{Updated network $Q_\theta$ and network $A_{\omega}$ after $K$ episodes of the algorithm}
    \emph{Random run $M$ trajectories, collect samples $\{\mathcal{D}\}$}\;
    
    \For{$n\leftarrow 0$ \KwTo{episode}}
    { $s_0=s(0)$\;

    \emph{Set $Q_\theta(s_N) = 0$}\;
    
    \For{$t\leftarrow 0$ \KwTo{N}}{
     \emph{Select action with exploration noise $a_t\leftarrow A_\omega(s_t)+\epsilon, \,\,\epsilon\sim \mathcal{N}(0,\sigma^2_{act})$ according to the current policy}\;
     
     \emph{Execute action $a_t$ and observe running cost $r_t=\frac{1}{2}[a_t^2+\nabla  \cdot b(s_t)]\Delta t$, then get new state $s_{t+1}\leftarrow F(s_t,a_t)$}\;
     
     \emph{Store transition $(s_t,a_t,r_t,s_{t+1})$ in $\mathcal{D}_{[t]}$}\;

     \emph{Sample a minibatch of M transitions $\{(s_t^i,a_t^i,r_t^i,s_{t+1}^{i})\}_{i=1}^M$}\;

     \emph{Set $y_t^i\leftarrow r_t^i+Q_{\theta}(s_{t+1}^{i},A_{\omega}(s_{t+1}^{i}))$}\;

     \emph{Update critic by minimizing the TD loss:
     $$\mathcal{L}_Q\leftarrow\frac{1}{M}\sum\limits_{i=1}^M (y_t^i-(Q_{\theta}^i(s_t^i,a_t^i)))^2.$$}
     
     
     \emph{Terminal prediction $\mathcal{L}_{pred}\leftarrow g(\hat{s}_N^{i,pred})$}\;

     \emph{Update actor policy using the sampled policy gradient:}\;
     
     \emph{
     $\nabla_\omega \mathcal{L}_{act}\approx \frac{1}{M} \sum_{i=1}^M \Big[\nabla_a Q_\theta (s,a)|_{s=s_t^i,a=a_t^i}\cdot \nabla_{\omega}A(s)|_{s=s_t^i}+\nabla_{a}\mathcal{L}_{pred}\cdot \nabla_{\omega }A(s)|_{s=s_t^i}\Big] $.
     }
     }
    }
\end{algorithm}

In the algorithm TP-DDPG, the loss function of the actor network is defined as 
\begin{equation}
\mathcal{L}_{act}(s_k,a_k)=\hat{Q}(s_k,a_k)+\mathcal{L}_{pred},  \label{define actorl loss}
\end{equation}
where the first part is the state-action value $\hat{Q}(s_k,a_k)$ estimated by the critic network, and the second one is the loss of the terminal prediction. That means $\mathcal{L}_{act}$ is the estimation of total cost 
\begin{equation}
    \mathcal{\tilde{J}}_k=\sum_{i=k}^{N-1} f(X_i,u_i) \Delta t+g(X_N), \label{define total cost}
\end{equation}
where $\hat{Q}(s_k,a_k)$ estimates the running cost $\int_t^Tf(X(\tau),u(\tau))d\tau$, and $\mathcal{L}_{pred}$ calculates the approximated terminal cost. By the chain rule, we have
$$\frac{\partial\mathcal{L}_{pred}(\hat{s}_N^{k,\hat{a}})}{\partial \omega}=\frac{\partial g(\hat{s}_N^{k,\hat{a}})}{\partial \omega}=\frac{\partial g(\hat{s}_N^{k,\hat{a}})}{\partial \hat{a}}\cdot \frac{\partial \hat{a}}{\partial \omega}\,.$$ 
The actor network can be trained through the utilization of the back-propagation method, where the terminal loss is incorporated into the learning process at each interaction. This approach allows the agent tend to choose actions that are more likely to reach the target terminal state. Then the actor network is updates by
$$\frac{\partial\mathcal{L}_{act}}{\partial \omega}=\frac{\partial\mathcal{L}_{\mathcal{Q}}}{\partial \hat{a}}\cdot \frac{\partial \hat{a}}{\partial \omega}+\frac{\partial g(\hat{s}_N^{k,\hat{a}})}{\partial \hat{a}}\cdot \frac{\partial \hat{a}}{\partial \omega}\,.$$
The critic network is utilized to approximate the accumulative running cost $\sum_{k=t}^{N-1}  r_k(s_k,a_k)$, we update critic network by minimizing TD error:
$$\big(r_t^i+Q_{\theta}(s_{t+1}^{i},A_{\omega}(s_{t+1}^{i}))-(Q_{\theta}^i(s_t^i,a_t^i))\big)^2\,.$$
The flow chart is presented in Figure \ref{flow chart} to illustrate the concept of TP-DDPG.

\newtheorem{remark1}{Remark}[section]

\begin{remark1}
    Some studies investigate finite-horizon DDPG \cite{vp2021finite} in a backward manner, whereas our focus lies in addressing the finite-horizon optimal control problem through a forward approach. Besides, in our algorithm, the actor network updates its parameters to choose the action that leads to a lower total cost $\mathcal{J}$.
\end{remark1}
\begin{remark1}
In our experiments, we discover that at the beginning when the agent is unable to reach the target point, the terminal prediction loss dominates the update of the actor network, as $\mathcal{L}_{pred}$ is bigger at this stage. Following a few episodes of training, the agent is capable of achieving the target. In this situation, the Q value obtained by the critic network dominates the update of the actor network. In other words, our agent first finds a transition path that begins at $X_0$ and terminates at the target point $X_T$. Then, under the influence of the critic network, the agent updates the geometry of the pathway so that it approaches the smallest Onsager-Machlup action functional.
\end{remark1}

\section{Convergence Analysis}\label{Convergence analysis}

\setlength{\parindent}{2em} In our algorithm, the critic network is used to estimate the accumulative running cost $\int_t^T f(X(\tau),u(\tau))d\tau$, and the terminal prediction cost $\mathcal{L}_{pred}$ is utilized to estimate the terminal cost $g(X_N)$. Let $Q_n$ denote the optimal state-action value function by dynamic programming, for $n=N-1, \cdots, 0$, we have
\begin{equation*}
\begin{cases}
Q_N(x_N)=g(x_N)\,,\\
    Q_n(x_n,a)=f(x_n,a)\Delta t+\mathop{\mbox{inf}}\limits_{a\in \mathbb{A}}Q_{n+1}(x_{n+1}, a)\,,
\end{cases}
\end{equation*}
where $\Delta t=\frac{T}{N}$.

This section is devoted to the convergence of the estimator $\hat{Q}_n^M+g(X_N^{n,\hat{a}_n})$ of the state-action value function $Q_n$ obtained from a training sample of size $M$, where $X_N^{n,\hat{a}_n}$ stands for the terminal state predicted by current actor network $\hat{a}_n$
starting from $n$-th state.

This research investigates the estimation error caused by batch sampling and the approximation error of the neural network, leaving aside the optimization error. We first rewrite the update of the algorithm into a mathematical expression and then discuss the estimation error and approximation error of the actor and critic networks, respectively. In Lemma \ref{lemma 1} and Lemma \ref{lemma 2}, the boundaries of estimation error and approximation error caused by empirically estimating the optimal policy through the actor network are given. In Theorem \ref{theorem 1}, we investigate the two types of errors introduced by empirically estimating the state-action value function via the critic network and the relationship with the errors brought by the actor network. We refer to the paper \cite{hure2021deep}, \cite{kohler2006nonparametric}, and \cite{gyorfi2002distribution} for convergence analysis of the algorithm TP-DDPG. The research \cite{hure2021deep} presents the convergence of the proposed hybrid-now method, which is inspired by dynamic programming, and updates the network backward. While our method TP-DDPG updates neural networks in a forward way as it evolves according to state transition dynamics. That develops an essential difference between the two methods. The proof is mainly distinct from the literature in the following aspects:
\begin{itemize}
    \item \textbf{Diverse networks' update strategies}. The hybrid-now algorithm first updates the actor network and then the critic network, from the terminal point in a backward way. On the contrary, we update our networks first on critic then on actor and iterate the updating process in a forward manner with predicted terminal state. Besides, we let $\hat{Q}_N=0$ at the end of each episode which exerts a significant impact on the weight of the critic network to the next episode.
    \item \textbf{Assumption for the previous state}. The hybrid-now method needs to assume the prior knowledge on the distribution $\mu_n$ of
the previous state $X_n$ when the next state $X_{n+1}$ is known. Our TP-DDPG algorithm obtains a sequence of $\{X_n^{(m)}\}_{n=0,1\leq m\leq M}^N$ by generating $M$ trajectories according to the state transition dynamics without presumed prior distribution. 
    \item \textbf{Additional analysis for terminal prediction}. The TP-DDPG algorithm controls the agent to reach the target terminal point via terminal prediction, so it requires additional analysis of the error introduced by terminal prediction.
\end{itemize}

We provide certain notations in Table \ref{Notations} for reading convenience.

\begin{table*}[htb]
\centering
\caption{Notations}
\label{Notations}
\begin{tabular}{cc}
   \toprule
   Notation & Meaning   \\
   \midrule
$\mathcal{X}$ & State Space and $\mathcal{X}\in \mathbb{R}^d$ \\
\hline
$\mathbb{A}$ & Control Space and $\mathbb{A}\in \mathbb{R}^u$ \\
\hline

\multirow{2}{*}{$Q_n$} & \multirow{2}{*}{\makecell[c]{The theoretical optimal state-action function\\including both running and terminal cost}}\\
\specialrule{0em}{1pt}{1pt}\\
\hline
\multirow{2}{*}{$a^{opt}(X_n)$} & \multirow{2}{*}{\makecell[c]{The optimal feedback control for state-action\\function $Q_n$ under given state $X_n$}} \\
\specialrule{0em}{1pt}{1pt}\\
\hline
\multirow{2}{*}{$
\mathcal{Q}_M$} & \multirow{2}{*}{\makecell[c]{The set of neural networks to approximate\\ the state-action function excluding terminal cost}} \\
\specialrule{0em}{1pt}{1pt}\\
\hline
{$\mathcal{A}_M$} & {The class of neural networks for policy}\\
\hline
\multirow{2}{*}{$\mathbb{A}^\mathcal{X}$} & \multirow{2}{*}{\makecell[c]{The set of Borelian function from\\ state space $\mathcal{X}$ to the control space $\mathbb{A}$}} \\
\specialrule{0em}{1pt}{1pt}\\
\hline
\multirow{2}{*}{$\hat{a}_n$} & \multirow{2}{*}{\makecell[c]{The actor neural network after $n$ steps to\\ estimate the optimal policy}} \\
\specialrule{0em}{1pt}{1pt}\\
\hline
\multirow{2}{*}{$\hat{a}_n(X_n)$} & \multirow{2}{*}{\makecell[c]{The action output by actor network\\ after $n$ steps for a given state $X_n$}} \\
\specialrule{0em}{1pt}{1pt}\\
\hline
\multirow{2}{*}{$X_n^{(m)}$} & \multirow{2}{*}{The $n$-th state in $m$-th trajectories} \\
\specialrule{0em}{1pt}{1pt}\\
\hline
\multirow{2}{*}{$\hat{a}_n^\xi(X_n^{(m)})$} & \multirow{2}{*}{\makecell[c]{$\hat{a}_n^\xi(X_n^{(m)}):= \hat{a}_n(X_n^{(m)})+\xi_n^{(m)}$,\\ where $\xi_n^{(m)}\sim N(0,\sigma^2_{act})$}}\\
\specialrule{0em}{1pt}{1pt}\\
\hline
\multirow{2}{*}{$\tilde{Q}_n$} & \multirow{2}{*}{\makecell[c]{The untruncated estimation of\\the state-action function excluding terminal cost}} \\
\specialrule{0em}{1pt}{1pt}\\
\hline
\multirow{2}{*}{$\hat{Q}_n$} & \multirow{2}{*}{\makecell[c]{The truncated estimation of\\ the state-action function excluding terminal cost}} \\
\specialrule{0em}{1pt}{1pt}\\
\hline
\multirow{2}{*}{$X_N^{n,A}$} & \multirow{2}{*}{\makecell[c]{The terminal state predicted  by current actor network $A$\\ starting from $n$-th state $X_n$}}\\
\specialrule{0em}{1pt}{1pt}\\
\hline
\multirow{2}{*}{$\Bar{Q}_n$} & \multirow{2}{*}{The quantity estimated by $\hat{Q}_n$}\\
\specialrule{0em}{1pt}{1pt}\\
\hline
$(X_k^{'(m)})_{1\leq m \leq M,0\leq k \leq n} $ & The ghost sample of $(X_k^{(m)})_{1\leq m \leq M,0\leq k \leq n}$ \\
   \bottomrule
\end{tabular}
\end{table*}

In the following, we shall consider the following set of neural networks for the state-action value function approximation:
\begin{equation}
    ^{\eta}\mathcal{Q}^{\gamma}_{K}\triangleq\{x\in \mathcal{X}\times \mathbb{A}\mapsto \Phi(x;\theta)=\sum_{i=1}^K c_i\sigma_A(a_{i\cdot}x+b_i)+c_0,\theta=(a_i,b_i,c_i)_i\} \label{critic network}
\end{equation}
where $a_{i}\in \mathbb{R}^{d+u}$, $b_i,c_i\in \mathbb{R}$, and $\|a_i\|\leq\eta$, $|\sum_{i=1}^K c_i|\leq \gamma$, the $\|\cdot\|$ is the Euclidean norm in $\mathbb{R}^{d+u}$. Note that $\sigma_A$ represents the Arctan activation function, and $K$ denotes the number of neurons in one hidden layer. $\eta$ and $\gamma$ are often referred to in the literature as respectively kernel and total variation.

The actor network is designed to estimate feedback optimal control at time $n = 0,\cdots, N - 1$, the network architecture is depicted below:
\begin{equation*}
\begin{aligned}
    ^{\eta}\mathcal{A}^{\gamma}_{K}:=\big\{&x\in \mathcal{X}\triangleq A(x;\beta)=(A_1(x;\beta),\cdots,\, A_u(x;\beta))\in \mathbb{A},\\&A_i(x;\beta)=\sigma_{\mathbb{A}}\big(\sum_{j=1}^Kc_{ij}(a_{ij\cdot}x+b_{ij})_++c_{0j}\big), i=1,2,\cdots,\,u,\,\beta=(a_{ij},b_{ij},c_{ij})_{i,j}\big\},
    \end{aligned}
\end{equation*}
where $a_{ij}\in\mathbb{R}^u, b_{ij},c_{ij}\in \mathbb{R}$ and $\|a_{ij}\|\leq \eta, \sum_{i=1}^K|c_{ij}|\leq \gamma$, the $\|\cdot\|$ is the Euclidean norm in $\mathbb{R}^u$. The $u$ represents the dimension of $\mathbb{A}$. The activation function $\sigma_{\mathbb{A}}$ is determined by the form of the control space.
 The actor neural networks under consideration contain one hidden layer with $K$ neurons, Relu activation functions, and $\sigma_{\mathbb{A}}$ as the output layer.

Here are our assumptions on the boundedness and Lipschitz condition of the cost function:

\newtheorem{assumption 1}{Assumption}[section]
\begin{assumption 1}
There exist some positive constants $\| f\|_\infty$, $\| g\|_\infty$, $[f]_L$, and $[g]_L$ satisfying
\begin{equation}
    \begin{aligned}
        \|f(x,a)\|\leq\|f\|_{\infty}, \|g(x)\|\leq \|g\|_{\infty}  \,\,\,\forall x\in \mathcal{X},a\in \mathbb{A}\,,\\
|f(x,a)-f(x',a')|\leq [f]_L(|x-x'|+|a-a'|)\,,\\
|g(x)-g(x')|\leq [g]_L|x-x'|,\,\,\,\forall x,x'\in \mathcal{X},a,a'\in \mathbb{A}\,.
    \end{aligned}
\end{equation}
\end{assumption 1}

Under the boundedness condition, the state-action value function is also bounded as $\|Q_n\|_\infty \leq (N-n)\|f\|_{\infty}\Delta t+\|g\|_{\infty}$

We also assume a Lipschitz condition on the dynamics.

\newtheorem{assumption}[assumption 1]{Assumption}
\begin{assumption}
For couples $(x^{(m)}, a^{(m)})$ and $(x'^{(m)}, a'^{(m)})$ in $\mathcal{X} \times \mathbb{A}$, there exists a constant $\rho_M<1$ such that
\begin{equation}
    \begin{aligned}
        |F(x^{(m)},a^{(m)})-F(x'^{(m)},a'^{(m)})|\leq \rho_M(|x^{(m)}-x'^{(m)}|+|a^{(m)}-a'^{(m)}|)\,,
    \end{aligned}
\end{equation}
where $F$ is the discretion dynamics satisfies (\ref{state dyn}).
\end{assumption}

When feeding data into the neural network, we denote $\eta_M, \gamma_M$ and $K_M$ to be parameters of (\ref{critic network}) such that
\begin{equation}
    \eta_M, \gamma_M,K_M\xrightarrow{M\rightarrow \infty} \infty \,\,s.t. \frac{\gamma^4_MK_Mlog(M)}{M},\frac{\sqrt{\log{(M)}}\gamma_M}{\sqrt{M}}(\eta_M\gamma_M+[g]_L\frac{\rho_M-\rho_M^{N-n+1}}{1-\rho_M})\xrightarrow{M\rightarrow \infty} 0\,.
\end{equation}
where $M$ represents the batch size. For the simplicity of notations, we denote $\mathcal{Q}_M:= {^{\eta}\mathcal{Q}^{\gamma}_{K}}$

Denote the set of Borelian functions from the state space $\mathcal{X}$ into the control space $\mathbb{A}$ as $\mathbb{A}^\mathcal{X}$ and represent the set of neural networks as $\mathcal{A}_M:={^\eta\mathcal{A}_K^\gamma}$. For $n=0,\cdots, N-1$, $\hat{a}_n\in\mathcal{A}_M$ stands for the actor neural network after $n$ steps.

The actor network and critic network's modified formula should be written as the following mathematical expression:

\begin{itemize}
\item[$\bullet$] Sampling $M$ trajectories into buffer.
\item[$\bullet$] For $n=0,\cdots,N-2$, we take $M$ samples $\big\{\big(X_{n}^{(m)},a_n^{(m)},r(X_{n}^{(m)},a_n^{(m)}),X_{n+1}^{(m)}\big)_{m=1}^M\big\}$ from the buffer at each timestep along with $n$, where $a_n^{(m)}:=\hat{a}_n^{\xi}\big(X_n^{(m)}\big), r\big(X_{n}^{(m)},a_n^{(m)}\big):=f\big(X_{n}^{(m)},\hat{a}_n^{\xi}\big(X_n^{(m)}\big)\big)\Delta t. $

\begin{itemize}
    \item[(i)] Compute the untruncated estimation of the value function at time $n+1$.
    \begin{equation}
    \begin{aligned}
    \tilde{Q}_{n+1}\in \mathop{argmin}\limits_{\Phi\in\mathcal{Q}_M}
\frac{1}{M}\sum_{m=1}^M\bigg[-f\big(X_n^{(m)},\hat{a}_n^{\xi}\big(X_n^{(m)}\big)\big)\Delta t+\hat{Q}_n\big(X_n^{(m)},\hat{a}_n^{\xi}\big(X_n^{(m)}\big)\big)
-\Phi\Big(X_{n+1}^{(m)},\hat{a}_n(X_{n+1}^{(m)}))\Big)\bigg]^2\,,
\end{aligned}
    \end{equation}
where $\hat{a}_n^{\xi}\big(X_n^{(m)}\big)=\hat{a}_n(X_n^{(m)})+\xi_n^{(m)},\,\xi_n^{(m)}\sim N(0,\sigma^2_{act})$.

    Set the truncated estimated value function at time $n+1$
    \begin{equation}        \hat{Q}_{n+1}=max\big(min(\tilde{Q}_{n+1}+g,\|Q_{n+1}\|_{\infty}),-\|Q_{n+1}\|_{\infty}\big)-g\,.\label{truncated}
    \end{equation}

   \item[(ii)]
   Compute the approximated policy at time $n+1$
   \begin{equation}
       \begin{aligned}           \hat{a}_{n+1}\in\mathop{argmin}\limits_{A\in\mathcal{A}_M}\frac{1}{M}\sum_{m=1}^M\bigg[\hat{Q}_{n+1}\Big(X_{n+1}^{(m)},A\big(X_{n+1}^{(m)}\big)\Big)\Big)+g\Big(X_N^{(m),n+1,A}\Big)\bigg]\,, \label{actor update}
       \end{aligned}
   \end{equation}
     where $X_N^{(m),n+1,A}$ represents the terminal state predicted by network $A\in \mathcal{A}_M$ from $n+1$-th state $X_{n+1}^{(m)}$.
\end{itemize}

\end{itemize}
\newtheorem{remark}[assumption 1]{Remark}
\begin{remark}
In reinforcement learning, the DDPG applies TD error to update the critic network by
\begin{equation*}
    Q(s_t,a_t;\theta)\approx r(s_t,a_t)+Q(s_{t+1},a_{t+1};\theta)\,,
\end{equation*}
that is 
$$
Q(s_{t+1},a_{t+1};\theta)\approx - r(s_t,a_t) + Q(s_t,a_t;\theta) \,,
$$
where $a_k=\hat{a}(s_k)$. Our goal is to get the recurrent relation between $|\hat{Q}_{n+1}+g(X_N^{n+1,\hat{a}_{n+1}})-Q_{n+1}|$ and $|\hat{Q}_{n}+g(X_N^{n,\hat{a}_{n}})-Q_{n}|$.
\end{remark}
\begin{remark}
    Note that we truncate the estimated state-action value function $\hat{Q}_n$ with an a priori constraint (\ref{truncated}) on the true state-action value function $Q_n$. From a practical implementation point of view, this truncation step is natural and is also used to simplify the proof of convergence of the algorithm.
\end{remark}

First, we induce the estimation error at time $n+1$ associated with the TP-DDPG algorithm by
\begin{equation}
    \begin{aligned}
        \varepsilon_{n+1}^{esti}=\sup_{A\in\mathcal{A}_M}\Bigg|&\frac{1}{M}\sum_{m=1}^M\bigg[-f\big(X_n^{(m)},A(X_n^{(m)})\big)\Delta t+\hat{Q}_n\big(X_n^{(m)},A(X_n^{(m)})\big)+g\big(X_N^{(m),n,A}\big)\bigg]\\&-\mathbb{E}_M\bigg[-f\big(X_n,A(X_n)\big)\Delta t+\hat{Q}_n\big(X_n,A(X_n)\big)+g\big(X_N^{n,A}\big)\bigg]\Bigg|\,,
    \end{aligned}
\end{equation}
where the $\mathbb{E}_M$ means the expectation conditioned by the training set utilized in estimating optimal policies $(\hat{a}(X_k))_{1\leq k \leq n}$. The estimation error measures how closely the selected estimator (e.g., the mean square estimate) approximates a certain quantity (e.g., the conditional expectation). Obviously, we anticipate that the estimation to become more accurate when the size of the training set is sufficiently large.
\begin{remark}
    This estimation error $\varepsilon^{esti}_{n+1}$ is caused by the use of empirical cost functional in (\ref{actor update}) to approximate the optimal control function by the neural network in $\mathcal{A}_M$. For $\varepsilon^{esti}_{n+1}$, here $X_n$ represents previous state for $X_{n+1}$ obtained from the quadruplet $\big(X_{n}^{(m)},a_n^{(m)},r(X_{n}^{(m)},a_n^{(m)}),X_{n+1}^{(m)}\big)$.
\end{remark}
Then we have the following bound for the estimation error.
\newtheorem{lemma}[assumption 1]{Lemma}
\begin{lemma}\label{lemma 1}
    For $n=0,1,\cdots,N-1$, the following holds
    \begin{equation}
        \begin{aligned}
            \mathbb{E}\varepsilon^{esti}_{n+1}\leq (\sqrt{2}+16)\frac{(N-n+1)\|f\|_{\infty}+\|g\|_{\infty}}{\sqrt{M}}+16\bigg([f]_L+\eta_M\gamma_M+[g]_L\frac{\rho_M-\rho_M^{N-n+1}}{1-\rho_M}\bigg)\frac{\gamma_M}{\sqrt{M}}\,.
        \end{aligned}
    \end{equation}
    
\end{lemma}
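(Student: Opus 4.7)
The plan is to treat $\mathbb{E}\varepsilon_{n+1}^{esti}$ as a uniform deviation over the policy class $\mathcal{A}_M$ and control it by a standard symmetrization plus Rademacher complexity argument, tailored to the three-term structure of the functional
\[
\Psi_A(X_n)\;=\;-f\bigl(X_n,A(X_n)\bigr)\Delta t\;+\;\hat Q_n\bigl(X_n,A(X_n)\bigr)\;+\;g\bigl(X_N^{n,A}\bigr).
\]
First, I would introduce the ghost sample $(X_k'^{(m)})_{1\le m\le M,\,0\le k\le n}$ listed in Table \ref{Notations}, which is an independent conditional copy given the previously estimated $\hat a_0,\dots,\hat a_n,\hat Q_n$. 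A standard symmetrization step then gives
\[
\mathbb{E}\varepsilon_{n+1}^{esti}\;\le\;2\,\mathbb{E}\!\sup_{A\in\mathcal{A}_M}\!\Bigl|\tfrac{1}{M}\!\sum_{m=1}^M \sigma^{(m)}\Psi_A\bigl(X_n^{(m)}\bigr)\Bigr|,
\]
where the $\sigma^{(m)}$ are i.i.d.\ Rademacher signs independent of everything else.

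Second, I would split the supremum through the triangle inequality into three Rademacher processes corresponding to the three summands of $\Psi_A$. For each, the strategy is identical: upper-bound the process by an absolute-boundedness contribution plus a Lipschitz/contraction contribution. The bounded part uses $\|f\|_\infty$, $\|\hat Q_n\|_\infty\le(N-n)\|f\|_\infty\Delta t+\|g\|_\infty$ (from truncation \eqref{truncated}), and $\|g\|_\infty$; combined through a bounded-differences/McDiarmid step these produce the $(\sqrt 2+16)\bigl((N-n+1)\|f\|_\infty+\|g\|_\infty\bigr)/\sqrt M$ piece of the bound. For the Lipschitz part I would apply the Ledoux–Talagrand contraction principle to each of the three compositions, peeling off the Lipschitz constant outside and leaving the Rademacher complexity of $\mathcal{A}_M$, which for the one-hidden-layer architecture with parameters $\eta_M,\gamma_M$ admits the standard bound of order $\gamma_M/\sqrt M$. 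The $f$-term contributes the factor $[f]_L$; the $\hat Q_n$-term contributes $\eta_M\gamma_M$, since the Arctan critic network in $\mathcal{Q}_M$ is globally Lipschitz with constant $\eta_M\gamma_M$ on $\mathcal{X}\times\mathbb{A}$.

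The main technical obstacle is the terminal-prediction term $g(X_N^{n,A})$, because its dependence on $A$ passes through $N-n$ iterated applications of $F$. Here I would unfold
\[
X_N^{n,A}\;=\;\underbrace{F\circ F\circ\cdots\circ F}_{N-n}\bigl(X_n,A(X_n)\bigr)
\]
and apply Assumption 4.2 inductively: a uniform perturbation $\|A-A'\|_\infty$ of the policy is amplified at each composition by a factor of $\rho_M$, so that $\|X_N^{n,A}-X_N^{n,A'}\|\le\sum_{k=1}^{N-n}\rho_M^{k}\,\|A-A'\|_\infty=\frac{\rho_M-\rho_M^{N-n+1}}{1-\rho_M}\,\|A-A'\|_\infty$. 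Composing with $g$ (Lipschitz constant $[g]_L$) and invoking the contraction principle once more, this produces precisely the $[g]_L\tfrac{\rho_M-\rho_M^{N-n+1}}{1-\rho_M}$ coefficient appearing in the lemma.

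Finally, I would add up the three Lipschitz contributions — $[f]_L$, $\eta_M\gamma_M$, and $[g]_L(\rho_M-\rho_M^{N-n+1})/(1-\rho_M)$ — each multiplied by the common Rademacher-complexity factor $\gamma_M/\sqrt M$, together with the bounded-differences term. Tracking the universal constants through the symmetrization and contraction steps, as in \cite{hure2021deep,gyorfi2002distribution}, yields the prefactors $(\sqrt2+16)$ and $16$ in the stated bound. The delicate point is really the second paragraph's inductive propagation of the policy's Lipschitz constant through the forward dynamics; the geometric summation is what keeps the bound stable in the horizon $N-n$, and it is precisely what is absent in the backward hybrid-now analysis of \cite{hure2021deep}.
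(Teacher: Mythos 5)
Your proposal follows essentially the same route as the paper's proof: ghost-sample symmetrization, introduction of Rademacher variables, a split of the empirical process into a bounded reference part (the paper uses the zero-control process where you invoke bounded differences) plus a Lipschitz part peeled off by contraction, with the $\eta_M\gamma_M$ Lipschitz constant of the critic, the $\gamma_M/\sqrt{M}$ Rademacher complexity of $\mathcal{A}_M$, and the inductive $\rho_M$-propagation through the iterated dynamics yielding the geometric factor $[g]_L\frac{\rho_M-\rho_M^{N-n+1}}{1-\rho_M}$. The argument is correct and matches the paper's in all essential steps, differing only in presentation (contraction principle versus explicit comparison to the zero control).
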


\begin{proof}
See Appendix \ref{proof of lemma 1}.
\end{proof}

Since the class of neural networks $\mathcal{A}_M$ is not dense in the set $\mathbb{A}^\mathcal{X}$ of all Borelian functions, we consider the approximation error as a measure of how accurately the neural network function in set $\mathcal{A}_M$ approximates the regression function.

The approximation error at time $n+1$ is defined in the following
\begin{equation}
    \begin{aligned}
        \varepsilon_{n+1}^{approx}=&\inf_{A\in \mathcal{A}_M}\mathbb{E}_M\bigg[-f(X_n,A(X_n))\Delta t+\hat{Q}_n(X_n,A(X_n))+g(X_N^{n,A})\bigg]\\
        &-\inf_{A\in\mathbb{A}^\mathcal{X}}\mathbb{E}_M\bigg[-f(X_n,A(X_n))\Delta t+\hat{Q}_n(X_n,A(X_n))+g(X_N^{n,A})\bigg]\,.
    \end{aligned}
\end{equation}

\begin{lemma}\label{lemma 2}
    For $n=0,1,\cdots, N-1$, the following holds
    \begin{equation}
        \begin{aligned}
            \varepsilon_{n+1}^{approx}\leq &\inf_{A\in\mathcal{A}_M}\Big\{2[f]_L\Delta t\mathbb{E}_M\Big[\Big|A(X_n)-a^{opt}(X_n)\Big|\Big]+2\mathbb{E}_M\Big[\Big|\hat{Q}_n(X_n,A(X_n))+g(X_N^{n,A})-Q_n(X_n,a^{opt}(X_n))\Big|\Big]\Big\}\,.
        \end{aligned}
    \end{equation}
\end{lemma}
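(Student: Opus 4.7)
My plan is to bound $\varepsilon^{approx}_{n+1}$ by comparing both infima in its definition to a common reference quantity built from the true optimal value. Abbreviating
$$J(A):=\mathbb{E}_M\bigl[-f(X_n,A(X_n))\Delta t+\hat Q_n(X_n,A(X_n))+g(X_N^{n,A})\bigr],$$
the definition reads $\varepsilon^{approx}_{n+1}=\inf_{\mathcal{A}_M}J-\inf_{\mathbb{A}^\mathcal{X}}J$. Since $a^{opt}\in\mathbb{A}^\mathcal{X}$, I have the trivial inequality $\inf_{\mathbb{A}^\mathcal{X}}J\le J(a^{opt})$, and for any $A\in\mathcal{A}_M$, $\inf_{\mathcal{A}_M}J\le J(A)$; the task therefore reduces to controlling $J(A)-J(a^{opt})$ for a well-chosen $A\in\mathcal{A}_M$, with a suitable anchor that makes the Lipschitz and value-error terms in the lemma appear naturally.

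To do so I would introduce the anchor $\tilde C:=\mathbb{E}_M[Q_n(X_n,a^{opt}(X_n))-f(X_n,a^{opt}(X_n))\Delta t]$ and add and subtract $f(X_n,a^{opt}(X_n))\Delta t$ and $Q_n(X_n,a^{opt}(X_n))$ inside $J(A)$. This cleanly splits $J(A)-\tilde C$ into a running-cost difference, bounded via the Lipschitz continuity of $f$ in its second argument (Assumption~4.1), and a value-function discrepancy, bounded directly by the absolute value of the integrand:
\begin{equation*}
|J(A)-\tilde C|\le [f]_L\,\Delta t\,\mathbb{E}_M[|A(X_n)-a^{opt}(X_n)|]+\mathbb{E}_M\bigl[|\hat Q_n(X_n,A(X_n))+g(X_N^{n,A})-Q_n(X_n,a^{opt}(X_n))|\bigr].
\end{equation*}
Applying this inequality twice---once to a generic $A\in\mathcal{A}_M$ and once to $a^{opt}$, for which the Lipschitz term collapses to $0$---and chaining via $J(A)-J(a^{opt})\le|J(A)-\tilde C|+|J(a^{opt})-\tilde C|$ produces exactly the factor-of-two prefactor that appears in the lemma, since the value-error summand at $a^{opt}$ has the same form as that at $A$. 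Taking the infimum over $A\in\mathcal{A}_M$ on the right-hand side yields the claimed estimate.

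The main obstacle is the lower bound on $\inf_{\mathbb{A}^\mathcal{X}}J$: because $g(X_N^{n,A})$ depends on the \emph{entire} trajectory driven by the Borelian policy $A$, not merely on $A(X_n)$, the infimum cannot be collapsed into an expectation of a pointwise infimum over a single action. The triangle-inequality detour through $\tilde C$ circumvents this difficulty by promoting $a^{opt}$ to an explicit Borelian competitor and paying for the loose inequality $\inf_{\mathbb{A}^\mathcal{X}}J\le J(a^{opt})$ with the factor $2$ in the final bound. A secondary concern is that all intermediate quantities stay finite; this is guaranteed by the boundedness part of Assumption~4.1 together with the truncation step~(\ref{truncated}) on $\hat Q_n$, so each expectation and absolute value in the decomposition is well-defined.
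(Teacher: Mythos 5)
Your first ingredient is sound and coincides with the paper's: the anchor $\tilde C=\mathbb{E}_M[Q_n(X_n,a^{opt}(X_n))-f(X_n,a^{opt}(X_n))\Delta t]$ is exactly the pivot the paper inserts in (\ref{approx two parts}), and the bound $|J(A)-\tilde C|\le h(A)$ with $h(A):=[f]_L\Delta t\,\mathbb{E}_M[|A(X_n)-a^{opt}(X_n)|]+\mathbb{E}_M[|\hat{Q}_n(X_n,A(X_n))+g(X_N^{n,A})-Q_n(X_n,a^{opt}(X_n))|]$ is correct and is the paper's (\ref{1-th part}). The gap is in how you eliminate the subtracted infimum $\inf_{A\in\mathbb{A}^\mathcal{X}}J(A)$. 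You invoke $\inf_{\mathbb{A}^\mathcal{X}}J\le J(a^{opt})$ and declare that the task reduces to controlling $J(A)-J(a^{opt})$; but that inequality points the wrong way. It gives $-\inf_{\mathbb{A}^\mathcal{X}}J\ge -J(a^{opt})$, hence $\inf_{A\in\mathcal{A}_M}\big(J(A)-J(a^{opt})\big)\le \varepsilon^{approx}_{n+1}$: the quantity you propose to bound is a \emph{lower} bound for $\varepsilon^{approx}_{n+1}$, so bounding it from above says nothing about $\varepsilon^{approx}_{n+1}$ itself. An upper bound on $\varepsilon^{approx}_{n+1}$ requires a \emph{lower} bound on $\inf_{\mathbb{A}^\mathcal{X}}J$, i.e., control of $\tilde C-J(A')$ uniformly over every Borelian policy $A'$ (a supremum over $\mathbb{A}^\mathcal{X}$), not the single evaluation at $a^{opt}$. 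That is precisely what the paper's Step 2, inequality (\ref{2-th part}), is devoted to; replacing this supremum by the one competitor $a^{opt}$ only shrinks the quantity further, and no multiplicative constant can repair an inequality used in the wrong direction. Your "main obstacle" paragraph correctly identifies that the $\mathbb{A}^\mathcal{X}$-infimum is the hard part, but the proposed detour does not actually address it.

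There is a second, smaller defect: even granting the reduction, your chain yields $\inf_{A\in\mathcal{A}_M}h(A)+h(a^{opt})$, where $h(a^{opt})=\mathbb{E}_M[|\hat{Q}_n(X_n,a^{opt}(X_n))+g(X_N^{n,a^{opt}})-Q_n(X_n,a^{opt}(X_n))|]$ is evaluated at the Borelian policy $a^{opt}$, which in general does not belong to $\mathcal{A}_M$. It is a fixed additive constant that cannot be pulled under the infimum over $A\in\mathcal{A}_M$; "having the same form" as the summand at $A$ is not enough to produce the factor $2$ inside the single infimum asserted by the lemma. To recover the stated bound one must, as in the paper, bound each bracket $\inf_{\mathcal{A}_M}J-\tilde C$ and $\tilde C-\inf_{\mathbb{A}^\mathcal{X}}J$ separately by $\inf_{A\in\mathcal{A}_M}h(A)$, the second using the inclusion $\mathcal{A}_M\subset\mathbb{A}^\mathcal{X}$ applied to the infimum of $h$ rather than of $J$.
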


\begin{proof}
See Appendix \ref{proof of lemma 2}.
\end{proof}

\newtheorem{theorem}[assumption 1]{Theorom}

Now we present the main result for our algorithm TP-DDPG.
\begin{theorem}
(\textbf{\mbox{Convergence analysis}}) Assume there exists an optimal feedback control $(a^{opt}(X_k))_{k=1}^n$ for the control problem with the optimal state-action value $Q_k$ for $k=1,2,\cdots, n$. Then, as $M\rightarrow\infty$,
\begin{equation}
    \begin{aligned}
        \inf_{A\in\mathcal{A}_M} \mathbb{E}_M&\Big[\Big|\hat{Q}_n(X_n,A(X_n))+g(X_N^{n,A})-Q_n(X_n,a^{opt}(X_n)\Big|\Big]\\
&= \mathcal{O}_{\mathbb{P}}\Bigg(\Big(\gamma^4_M\frac{K_M log(M)}{M}\Big)^{\frac{1}{2n}}+\Big(\frac{\gamma_M\sqrt{\log{(M)}}}{\sqrt{M}}\Big(\eta_M\gamma_M+[g]_L\frac{\rho_M-\rho_M^{N-n+1}}{1-\rho_M}\Big)\Big)^{\frac{1}{2n}}\\
&+\sup_{1\leq k\leq n}\inf_{A\in\mathcal{A}_M}\inf_{\Phi\in\mathcal{Q}}\Big(\mathbb{E}_M\big[\big|\Phi(X_k,A(X_k))+g(X_N^{n,A})-Q_k(X_k,a^{opt}(X_k))\big|\big]\Big)^{\frac{1}{2n}}\\
&+\sup_{0\leq k\leq n-1}\inf_{A\in\mathcal{A}_M}\Big(\mathbb{E}_M\big[\big|A(X_k)-a^{opt}(X_k)\big|\big])^{\frac{1}{2n}}+\Big(\big|\hat{Q}_0(X_0,\hat{a}_0(X_0))-Q_0(X_0,a^{opt}(X_0)\big|\Big)^{\frac{1}{2n}}\,,
    \end{aligned}
\end{equation}
where $\mathbb{E}_M$ denotes the expectation conditioned by the training set used to estimate the optimal policies $(\hat{a}_k)_{k=1}^n$. The notation $z_M=\mathcal{O}_\mathbb{P}(y_M)$ as $M\rightarrow \infty$ stands for that there exists $c>0$ such that $\mathbb{P}(|z_M|>c|y_M|)\rightarrow 0$ as $M$ goes to infinity.
\label{theorem 1}
\end{theorem}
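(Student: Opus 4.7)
The plan is to prove Theorem \ref{theorem 1} by induction on the time index $n$, combining Lemma \ref{lemma 1} and Lemma \ref{lemma 2} with a standard nonparametric-regression bound for the truncated critic $\hat{Q}_{n+1}$ and iterated Cauchy--Schwarz passages from $L^2$- to $L^1$-errors that produce the $1/(2n)$ exponent. First I would introduce the conditional regression target $\bar{Q}_{n+1}(X_{n+1},a) := -f(X_n,a)\,\Delta t + \hat{Q}_n(X_n,a)$, i.e.\ the quantity whose $L^2$-projection onto $\mathcal{Q}_M$ is being estimated by $\tilde{Q}_{n+1}$, and split the quantity of interest $\mathbb{E}_M|\hat{Q}_{n+1}(X_{n+1},A(X_{n+1}))+g(X_N^{n+1,A})-Q_{n+1}(X_{n+1},a^{opt}(X_{n+1}))|$ by triangle inequality into (i) a regression error $\mathbb{E}_M|\hat{Q}_{n+1}-\bar{Q}_{n+1}|$ and (ii) a Bellman residual $\mathbb{E}_M|\bar{Q}_{n+1}+g(X_N^{n+1,A})-Q_{n+1}(X_{n+1},a^{opt}(X_{n+1}))|$.

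For (i), I would use the empirical-process argument of Györfi et al.: combining the defining minimization of $\tilde{Q}_{n+1}$ over $\mathcal{Q}_M$, the truncation (\ref{truncated}), and a uniform law of large numbers over $\mathcal{Q}_M$ with covering-number complexity of order $\gamma_M^4 K_M\log(M)/M$ yields $\mathbb{E}_M[|\hat{Q}_{n+1}-\bar{Q}_{n+1}|^2]\lesssim \gamma_M^4 K_M\log(M)/M + \mathbb{E}\,\varepsilon_{n+1}^{esti} + \varepsilon_{n+1}^{approx}$; Lemma \ref{lemma 1} supplies the rate for $\mathbb{E}\,\varepsilon_{n+1}^{esti}$, which already carries the factor $\gamma_M\sqrt{\log M}/\sqrt{M}\cdot(\eta_M\gamma_M+[g]_L(\rho_M-\rho_M^{N-n+1})/(1-\rho_M))$ appearing in the theorem, and one Cauchy--Schwarz step converts this $L^2$-bound into the corresponding $L^1$-bound at the cost of a square root.

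For (ii), the dynamic-programming identity $Q_{n+1}(X_{n+1},a^{opt}(X_{n+1}))=Q_n(X_n,a^{opt}(X_n))-f(X_n,a^{opt}(X_n))\Delta t$ reduces the Bellman residual to the step-$n$ error $E_n:=\mathbb{E}_M|\hat{Q}_n(X_n,\hat{a}_n(X_n))+g(X_N^{n,\hat{a}_n})-Q_n(X_n,a^{opt}(X_n))|$, a Lipschitz contribution involving $[f]_L$ and $[g]_L$, and a terminal-prediction discrepancy $|g(X_N^{n+1,A})-g(X_N^{n+1,a^{opt}})|$. The last term is controlled by iterating the Lipschitz-contraction assumption $|F(x,a)-F(x',a')|\le\rho_M(|x-x'|+|a-a'|)$ over $N-n$ compositions of the dynamics, yielding exactly the geometric factor $(1-\rho_M^{N-n+1})/(1-\rho_M)$ visible in the theorem statement. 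Lemma \ref{lemma 2} then bounds the infimum over $A\in\mathcal{A}_M$ of the entire residual by the two approximation-type quantities $\inf_{A,\Phi}\mathbb{E}_M|\Phi(X_k,A(X_k))+g(X_N^{k,A})-Q_k(X_k,a^{opt}(X_k))|$ and $\inf_A\mathbb{E}_M|A(X_k)-a^{opt}(X_k)|$ that appear in the statement.

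Putting (i) and (ii) together gives a recursion for $E_n$ whose unrolling from the base case $n=0$---leaving the boundary residual $|\hat{Q}_0(X_0,\hat{a}_0(X_0))-Q_0(X_0,a^{opt}(X_0))|^{1/(2n)}$ in the final bound---produces the claimed $\mathcal{O}_\mathbb{P}$ rate with exponent $1/(2n)$. The main obstacle will be the non-locality of the terminal-prediction term $g(X_N^{n,A})$, which depends on $N-n$ compositions of $F$ through the current actor network $A$: controlling it requires both the contraction $\rho_M<1$ from the Lipschitz assumption on $F$ and a careful ghost-sample argument using $(X_k^{\prime(m)})_{0\le k\le n,\,1\le m\le M}$ so that the conditional expectation $\mathbb{E}_M$ (which conditions on the training history producing $\hat{a}_0,\ldots,\hat{a}_{n-1}$) remains valid after the repeated substitutions needed to expand these compositions.
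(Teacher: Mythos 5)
Your proposal follows essentially the same route as the paper's proof: you introduce the same auxiliary regression target $\bar{Q}_{n+1}(X_{n+1},a)=-f(X_n,a)\Delta t+\hat{Q}_n(X_n,a)$, split the error into the critic regression error (controlled via the covering-number bound of order $\gamma_M^4K_M\log(M)/M$) and a Bellman residual (controlled via Lemmas \ref{lemma 1} and \ref{lemma 2} together with the iterated $\rho_M$-contraction for the terminal-prediction term), and unroll the resulting recursion with the square-root losses that yield the $1/(2n)$ exponent and the $n=0$ boundary residual. This matches the paper's Steps 1--4 in both decomposition and key ingredients.
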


\begin{remark}
Note that $\Big(\gamma^4_M\frac{K_M log(M)}{M}\Big)^{\frac{1}{2n}}$ and $\Big(\frac{\gamma_M\sqrt{\log{(M)}}}{\sqrt{M}}(\eta_M\gamma_M+[g]_L\frac{\rho_M-\rho_M^{N-n+1}}{1-\rho_M})\Big)^{\frac{1}{2n}}$ represent the estimation error caused by empirically estimating the state-action value function and the control policy by neural networks, where $\Big(\frac{\gamma_M\sqrt{\log{(M)}}}{\sqrt{M}}[g]_L\frac{\rho_M-\rho_M^{N-n+1}}{1-\rho_M}\Big)$ corresponds to terminal prediction loss. It shows that the larger N is, the larger the error introduced by the terminal prediction. This is because accurately predicting the terminal state $s_N$ becomes harder as $N$ grows. The third term and fourth term denote the approximation error with respect to the value function and the optimal control by neural networks. The last term quantifies the difference between the estimated $\hat{Q}_0(X_0,\hat{a}_0(X_0))$ and $Q_0(X_0,a^{opt}(X_0)$. As the networks undergo updates over a series of episodes, the estimated $\hat{Q}_0$ gradually converges to $Q_0$.

    To clarify the meaning of the above items, we present the following Table \ref{error}, where
        \textcircled{1}$:=\Big(\gamma^4_M\frac{K_M log(M)}{M}\Big)^{\frac{1}{2n}}$,
        \textcircled{2}$:=\Big(\frac{\gamma_M\sqrt{\log{(M)}}}{\sqrt{M}}\big(\eta_M\gamma_M+[g]_L\frac{\rho_M-\rho_M^{N-n+1}}{1-\rho_M}\big)\Big)^{\frac{1}{2n}}$,
        \textcircled{3}$:=\mathop{sup}\limits_{0\leq k\leq n}\mathop{inf}\limits_{A\in\mathcal{A}_M}\mathop{inf}\limits_{\Phi\in\mathcal{Q}}\Big(\mathbb{E}\big[\big|\Phi(X_k,A(X_k))+g(X_N^{n,A})-Q_k(X_k,a^{opt}(X_k))\big|\big]\Big)^{\frac{1}{2n}}$, and 
        \textcircled{4}$:=\mathop{sup}\limits_{0\leq k\leq n-1}\mathop{inf}\limits_{A\in\mathcal{A}_M}\Big(\mathbb{E}\big[\big|A(X_k)-a^{opt}(X_k)\big|\big]\Big)^{\frac{1}{2n}}$.        

\begin{table*}[!h]
    \centering
    \caption{Explanation for the right of result (\ref{conclu})}
    \label{error}
    \setlength{\tabcolsep}{6mm}
    \begin{tabular}{ccc}
\toprule
Item & Error type & Causes\\
\hline
\specialrule{0em}{1pt}{1pt}\\
\textcircled{1} & Estimation error & Empirically estimating state-action function \\ 
\specialrule{0em}{1pt}{1pt}\\
\multirow{2}{*}{\textcircled{2}} & \multirow{2}{*}{Estimation error} &  \multirow{2}{*}{\makecell[c]{Empirically estimating policy function\\ and terminal prediction by actor network}} \\
\specialrule{0em}{1pt}{1pt}\\
\specialrule{0em}{1pt}{1pt}\\
\textcircled{3} & Approximation error & Approximating the state-action function by critic network\\
\specialrule{0em}{1pt}{1pt}\\
\textcircled{4} & Approximation error & Approximating the policy function by actor network\\
\specialrule{0em}{1pt}{1pt}\\
\bottomrule
\end{tabular}
\end{table*}
    
\end{remark}

\begin{remark}
    At each episode, the critical network (value of $\hat{Q}_0$) experiences significant influence from the condition $\hat{Q}_N=0$ in the last round, which ensures that the critic network is updated in a direction that progressively aligns with the actual cumulative running cost instead of other wrong orientations. Thus, the convergence of the critic network is intricately tied to the influence imparted from episodes. 
    
    Vivek and Bhatnagar \cite{vp2021finite} perform a convergence analysis related to the different episodes. Here we could have the similar way to analyze how the condition $\hat{Q}_N=0$ affects $\hat{Q}_0$ by updating the critic network in successive episodes. Thus, after a certain number of training episodes, when $n$ is large enough, the last term in the Theorem \ref{theorem 1} also converges to 0.
    
    Based on the experimental results shown in Section \ref{experiments}, we can see that $\hat{Q}_0$ achieves convergence after several episodes. Figure \ref{compare Q and J} demonstrates the consistency between the total cost and the summation of $\hat{Q}_0$ and $g(X_N)$.

\end{remark}

\begin{proof}
    First, we introduce the following forward deduction auxiliary procedure as
    \begin{equation}
        \begin{cases}
            \bar{Q}_0(x,a)=\hat{Q}_0(x,a),\,\,for \,\,x\in\mathcal{X},a\in \mathcal{A}_M\\
\bar{Q}_{n+1}(X_{n+1},\hat{a}_n(X_{n+1}))=\mathbb{E}\big[-f(X_n,\hat{a}_n(X_n)+\xi_n)\Delta t+\hat{Q}_n(X_n,\hat{a}_n(X_n)+\xi_n)\big|X_n,\hat{a}_n\big]\\
\qquad\qquad\qquad\qquad\quad\,\,=-f(X_n,\hat{a}_n(X_n)+)\Delta t+\hat{Q}_n(X_n,\hat{a}_n(X_n))\,,\,\,for \,\,X_{n},X_{n+1}\in\mathcal{X},\hat{a}_n\in \mathcal{A}_M\,.
        \end{cases}
    \end{equation}
    Notice that $\bar{Q}_{n+1}$ is the quantity estimated by $\hat{Q}_{n+1}$.

   \textbf{Step 1}. \quad 
   In this step, we aim to determine the bound of $$\mathbb{E}_M\Bigg[\Bigg|\bar{Q}_{n+1}(X_{n+1},\hat{a}_n(X_{n+1}))+g(X_N^{n+1,\hat{a}_n})-\inf_{a\in\mathbb{A}^\mathcal{X}}\Big\{\big[-f(X_n, a(X_n))\Delta t+\hat{Q}_n(X_n, a(X_n))+g(X_N^{n, a})\big]\Big\}\Bigg|^2\Bigg].$$
   
   Note the inequality
   \begin{equation}
       \begin{aligned}
           &\Big[\bar{Q}_{n+1}(X_{n+1},\hat{a}_n(X_{n+1}))+g(X_N^{n+1,\hat{a}_n})\Big]\\
           &\qquad\qquad\qquad\qquad-\inf_{a\in\mathbb{A}^\mathcal{X}}\Big\{\big[-f(x_n,a(x_n))\Delta t+\hat{Q}_n(x_n,a(x_n))+g(X_N^{n,a})\big]
\Big\}\geq 0\,,
       \end{aligned}
   \end{equation}
   that is because $\hat{a}_n$ cannot perform better than  the optimal strategy.

Moreover, we have
\begin{equation}
\begin{aligned}
        \mathbb{E}_M\Big[\bar{Q}_{n+1}(X_{n+1},\hat{a}_n(X_{n+1}))+g(X_N^{n+1,\hat{a}_n})\Big]&=\mathbb{E}_M\Big[-f(X_n,\hat{a}_n(X_n))\Delta t+\hat{Q}_n(X_n,\hat{a}_n(X_n))+g(X_N^{n,\hat{a}_n})\Big]\\
  &=\mathbb{E}_M\Big[-f(X_n,\hat{a}_n(X_n))\Delta t+\hat{Q}_n(X_n,\hat{a}_n(X_n))+g(X_N^{n,\hat{a}_n})\Big]\\
&-\frac{1}{M}\sum_{m=1}^M\Big[-f(X_n^{(m)},\hat{a}_n(X_n^{(m)}))\Delta t+\hat{Q}_n(X_n,\hat{a}_n(X_n^{(m)}))+g(X_N^{(m),n,\hat{a}_n})\Big]
\\&+\frac{1}{M}\sum_{m=1}^M\Big[-f(X_n^{(m)},\hat{a}_n(X_n^{(m)}))\Delta t+\hat{Q}_n(X_n,\hat{a}_n(X_n^{(m)}))+g(X_N^{(m),n,\hat{a}_n})\Big]\\
&\leq \varepsilon_{n+1}^{esti}+\frac{1}{M}\sum_{m=1}^M\Big[-f(X_n^{(m)},\hat{a}_n(X_n^{(m)}))\Delta t+\hat{Q}_n(X_n,\hat{a}_n(X_n^{(m)}))+g(X_N^{(m),n,\hat{a}_n})\Big]\,.\label{turn to estimate}
    \end{aligned}
\end{equation}
For any $A\in\mathcal{A}_M$,
\begin{equation}
    \begin{aligned}
        &\frac{1}{M}\sum_{m=1}^M\Big[-f(X_n^{(m)},A(X_n^{(m)})\Delta t+\hat{Q}_n(X_n^{(m)},A(X_n^{(m)}))+g(X_N^{(m),n,A})\Big]\\
&=\frac{1}{M}\sum_{m=1}^M\Big[-f(X_n^{(m)},A(X_n^{(m)})\Delta t+\hat{Q}_n(X_n^{(m)},A(X_n^{(m)}))+g(X_N^{(m),n,A})\Big]\\
&-\mathbb{E}_{M}\Big[-f(X_n,A(X_n))\Delta t+\hat{Q}_n(X_n,A(X_n))+g(X_N^{n,A})\Big]\\
&+\mathbb{E}_{M}\Big[-f(X_n,A(X_n))\Delta t+\hat{Q}_n(X_n,A(X_n))+g(X_N^{n,A})\Big]\\
&\leq \varepsilon_{n+1}^{esti}+\mathbb{E}_{M}\Big[-f(X_n,A(X_n))\Delta t+\hat{Q}_n(X_n,A(X_n))+g(X_N^{n,A})\Big]\,.\label{take infumum}
    \end{aligned}
\end{equation}
Recalling $\hat{a}_n=\mathop{\mbox{argmin}}\limits_{A\in\mathcal{A}_M}\frac{1}{M}\sum_{m=1}^M\big[\hat{Q}_n(X_n,A(X_n)))+g(X_N^{(m),n,A})\big]$, we have
\begin{equation}
    \begin{aligned}
          \frac{1}{M}\sum_{m=1}^M&\bigg[-f(X_n^{(m)},\hat{a}_n(X_n^{(m)}))\Delta t+\hat{Q}_n(X_n,\hat{a}_n(X_n^{(m)}))+g(X_N^{(m),n,\hat{a}_n})\bigg]\\
        &=\inf_{A\in\mathcal{A}_M}\frac{1}{M}\sum_{m=1}^M \bigg[-f(X_n^{(m)},\hat{a}_n(X_n^{(m)}))\Delta t+\hat{Q}_n(X_n,A(X_n^{(m)}))+g(X_N^{(m),n,A})\bigg]\\
&= \inf_{A\in \mathcal{A}_M}\bigg\{\frac{1}{M}\sum_{m=1}^M\Big[-f(X_n^{(m)},A(X_n^{(m)})\Delta t+\hat{Q}_n(X_n^{(m)},A(X_n^{(m)}))+g(X_N^{(m),n,A})\Big]\\&\qquad+\frac{1}{M}\sum_{m=1}^M\Big[-f(X_n^{(m)},\hat{a}_n(X_n^{(m)}))+f(X_n^{(m)},A(X_n^{(m)}))\Big]\Delta t\bigg\}\\
& \leq \inf_{A\in \mathcal{A}_M}\bigg\{\frac{1}{M}\sum_{m=1}^M\Big[-f(X_n^{(m)},A(X_n^{(m)})\Delta t+\hat{Q}_n(X_n^{(m)},A(X_n^{(m)}))+g(X_N^{(m),n,A})\Big]\}\\
& \qquad+\sup_{A\in\mathcal{A}_M}\frac{1}{M}\sum_{m=1}^M\Delta t\Big[-f(X_n^{(m)},\hat{a}_n(X_n^{(m)}))+f(X_n^{(m)},A(X_n^{(m)}))\Big]\,.
\label{need f-f} 
    \end{aligned}
\end{equation}
Taking the infimum over $A\in\mathcal{A}_M$ to the (\ref{take infumum}) on both sides, then with (\ref{need f-f}) and (\ref{turn to estimate}), we have
\begin{equation}
    \begin{aligned}
         & \mathbb{E}_M\Bigg[\Bigg|\bar{Q}_{n+1}(X_{n+1},\hat{a}_n(X_{n+1}))+g(X_N^{n+1,\hat{a}_n})-\inf_{a\in\mathbb{A}^\mathcal{X}}\Big\{\big[-f(X_n,a(X_n))\Delta t+\hat{Q}_n(X_n,a(X_n))+g(X_N^{n,a})\big]\Big\}\Bigg|^2\Bigg]\\
&\leq 2((N-n)\|f\|_{\infty}\Delta t+\|g\|_{\infty})(2\varepsilon_{n+1}^{esti}+\varepsilon_{n+1}^{approx}+\sup_{A\in\mathcal{A}_M}\frac{1}{M}\sum_{m=1}^M\Delta t\Big[-f(X_n^{(m)},\hat{a}_n(X_n^{(m)}))+f(X_n^{(m)},A(X_n^{(m)}))\Big]\\
&=2((N-n)\|f\|_{\infty}\Delta t+\|g\|_{\infty})(2\varepsilon_{n+1}^{esti}+\varepsilon_{n+1}^{approx}+Y(X_n^{(m)}))\,,
\label{need explain}
    \end{aligned}
\end{equation}
where we denote $Y(X_n^{(m)})=\sup_{A\in\mathcal{A}_M}\frac{1}{M}\sum_{m=1}^M\Delta t\Big[-f(X_n^{(m)},\hat{a}_n(X_n^{(m)}))+f(X_n^{(m)},A(X_n^{(m)}))\Big]$. Then we have
\begin{equation}
        \mathbb{E}[Y(X_n^{(m)})]\leq [f]_L\Delta t\frac{\gamma_M}{\sqrt{M}}\,.
\end{equation}
That inequality arises from the same arguments as the \textit{Step 3} and \textit{Step 4} in the proof of Lemma \ref{lemma 1}, which we show the details in Appendix \ref{appendix 3}.

\textbf{Step 2}. \quad 
In this step, we first show the following relation for $A\in\mathcal{A}_M$:
\begin{equation}
    \begin{aligned}
        \mathbb{E}_M&\bigg[\Big|\hat{Q}_{n+1}(X_{n+1},A(X_{n+1})-\bar{Q}_{n+1}(X_{n+1},A(X_{n+1}))\Big|^2\bigg]\\&=\mathcal{O}\bigg(\gamma^4_M\frac{K_M log(M)}{M}+\inf_{\Phi\in\mathcal{Q}_M}\mathbb{E}_M\Big[\Big|\Phi(X_{n+1},A(X_{n+1}))-\bar{Q}_{n+1}(X_{n+1},A(X_{n+1}))\Big|^2\Big]\bigg) \,.\label{need combine 1}
    \end{aligned}
\end{equation}
Fix $M\in \mathbb{N}^+$, let $x_1,x_2,\cdots, x_M\in\mathbb{R}^{d+u}$, and set $x^M=(x_1,\cdots,x_M)$. Define the distance $d(f,g)$ between $f:\mathbb{R}^{d+u}\rightarrow \mathbb{R}$ and $g:\mathbb{R}^{d+u}\rightarrow \mathbb{R}$ by
$$d(f,g)=\big(\frac{1}{M}\sum_{m=1}^M|f(x_m)-g(x_m)|^2\big)^{\frac{1}{2}}.$$
An $\varepsilon$-cover of $\mathcal{Q}$ is a set of functions $f_1,f_2,\cdots,f_L:\mathbb{R}^{d+u}\rightarrow \mathbb{R}$ such that
$$\min_{l=1,2,\cdots,L} d(f,f_l)\leq \varepsilon \,\,\,for \,\,f\in\mathcal{Q}.$$
Let $\mathcal{N}_2(\varepsilon,\mathcal{Q},x^M)$ denote the size $L$ of the smallest $\varepsilon$-cover of $\mathcal{Q}$ w.r.t. the distance $d$ and set by convention $\mathcal{N}_2(\varepsilon ,\mathcal{Q}, x^M)=+\infty$ if there does not exist any $\varepsilon$-cover of $\mathcal{Q}$ of finite size.

Take $\delta_M:=\gamma^4K_M\frac{logM}{M}$, let $\delta > \delta_M$, and denote
$$\Omega_g:=\Big\{f - g : f \in \mathcal{Q}_M,\frac{1}{M}\sum^M_{m=1}|f(x_m) - g(x_m)|^2\leq \frac{\delta_M}{\gamma_M^2}\Big\}.$$
By Appendix \ref{appendix 2}
\begin{equation} 
\begin{aligned}   
        &\int_{c_2\delta/\gamma^2_M}^{\sqrt{\delta}}\mbox{log}(\mathcal{N}_2(\frac{u}{4\gamma_M},\Omega_g,x_1^M))^{\frac{1}{2}}du\leq \int_{c_2\delta/\gamma^2_M}^{\sqrt{\delta}}\mbox{log}(\mathcal{N}_2(\frac{u}{4\gamma_M},\mathcal{Q}_M,x_1^M))^{\frac{1}{2}}du\\
        &\leq \int_{c_2\delta/\gamma^2_M}^{\sqrt{\delta}} \big((4(d+u)+9)K_M+1\big)^\frac{1}{2}\Big[\mbox{log}\big(\frac{48e\gamma_M^2(K_M+1)}{u}\big)\Big]^\frac{1}{2}du\\
        & \leq \int_{c_2\delta/\gamma^2_M}^{\sqrt{\delta}} \big((4(d+u)+9)K_M+1\big)^\frac{1}{2}\Big[\mbox{log}\big(48e(K_M+1)\frac{\gamma_M^4}{\delta}\big)\Big]^\frac{1}{2}du\\
        & \leq \sqrt{\delta}\big((4(d+u)+9)K_M+1\big)^\frac{1}{2}\Big[\mbox{log}\big(48e(K_M+1)\gamma_M^4\big)\Big]^\frac{1}{2}\\
        & \leq c_5\sqrt{\delta}\sqrt{K_M}\sqrt{\mbox{log}(M)}\,.
        \end{aligned}
\end{equation}
Since $\delta_M:=\gamma^4_MK_M\frac{\mbox{log}{(M)}}{M}<\delta$, we have $\sqrt{\delta}\sqrt{K_M}\sqrt{\mbox{log}}\leq  \frac{\sqrt{M}\delta}{\gamma_M^2}$. Then apply the Appendix \ref{appendix 1}, the following holds
\begin{equation}
    \begin{aligned}
        \mathbb{E}_M&[|\tilde{Q}_{n+1}(X_{n+1},A(X_{n+1}))-\bar{Q}_{n+1}(X_{n+1},A(X_{n+1}))|^2]\\&=\mathcal{O}_{\mathbb{P}}(\gamma^4_M\frac{K_M \mbox{log}(M)}{M}+\inf_{\Phi\in\mathcal{Q}}\mathbb{E}_M[|\Phi(X_{n+1},A(X_{n+1}))-\bar{Q}_{n+1}(X_{n+1},A(X_{n+1}))|^2])\,.
    \end{aligned}
\end{equation}
Note that $\mathbb{E}_M[|\hat{Q}(X_{n+1},A(X_{n+1}))-\bar{Q}(X_{n+1},A(X_{n+1}))|^2]\leq\mathbb{E}_M[|\tilde{Q}(X_{n+1},A(X_{n+1}))-\bar{Q}(X_{n+1},A(X_{n+1}))|^2]$ always holds, it turns out 
\begin{equation}
    \begin{aligned}
        \mathbb{E}_M&\Bigg[\Bigg|\hat{Q}_{n+1}(X_{n+1},A(X_{n+1}))-\bar{Q}_{n+1}(X_{n+1},A(X_{n+1}))\Bigg|^2\Bigg]\\&=\mathcal{O}_{\mathbb{P}}\Bigg(\gamma^4_M\frac{K_M \mbox{log}(M)}{M}+\inf_{\Phi\in\mathcal{Q}}\mathbb{E}\Big[\Big|\Phi(X_{n+1},A(X_{n+1}))-\bar{Q}_{n+1}(X_{n+1},A(X_{n+1}))\big|^2\Big]\bigg)\,.
    \end{aligned}
\end{equation}

\textbf{Step 3}. \quad 
For $\Phi\in\mathcal{Q}$, 
\begin{equation}
    \begin{aligned}
        \inf_{\Phi\in\mathcal{Q}}\Big\|&\Phi(X_{n+1},A(X_{n+1}))-\bar{Q}_{n+1}(X_{n+1},A(X_{n+1}))\Big\|_{M,2}\\
&\leq \inf_{\Phi\in\mathcal{Q}}\Big\|\Phi(X_{n+1},A(X_{n+1}))+g(X_N^{n+1,A})-Q_{n+1}(X_{n+1},a^{opt}(X_{n+1}))\Big\|_{M,2}\\
&\,\,\,\,+\Big\|Q_{n+1}(X_{n+1},a^{opt}(X_{n+1}))-\bar{Q}_{n+1}(X_{n+1},A(X_{n+1}))-g(X_N^{n+1,A})\Big\|_{M,2}\,, \label{split two}
    \end{aligned}
\end{equation}
where $\|\cdot\|_{M,p}=(\mathbb{E}_M[|\cdot|^p])^{\frac{1}{p}}$. 

In this step, we intend to get the bound of $$\inf_{A\in\mathcal{A}_M}\big\|Q_{n+1}(X_{n+1},a^{opt}(X_{n+1}))-\bar{Q}_{n+1}(X_{n+1},A(X_{n+1}))-g(X_N^{n+1,A})\big\|_{M,2}.$$

Taking the infimum over $A\in\mathcal{A}_M$ to the last term of (\ref{split two}), and split it as
\begin{equation}
    \begin{aligned}
        &\inf_{A\in\mathcal{A}_M}\big\|Q_{n+1}(X_{n+1},a^{opt}(X_{n+1}))-\bar{Q}_{n+1}(X_{n+1},A(X_{n+1}))-g(X_N^{n+1,A})\big\|_{M,2}\\&\leq
\big\|Q_{n+1}(X_{n+1},a^{opt}(X_{n+1}))-\inf_{a\in\mathbb{A}^\mathcal{X}}\big\{\big[-f(X_n,a(X_n))\Delta t+\hat{Q}_n(X_n,a(X_n))+g(X_N^{n,a})\big]\big\}\big\|_{M,2}\\
&\qquad+\inf_{A\in\mathcal{A}_M}\big\|\inf_{a\in\mathbb{A}^\mathcal{X}}\{ \big[-f(X_n,a(X_n))\Delta t+\hat{Q}_n(X_n,a(X_n))+g(X_N^{n,a})\big]\}-\bar{Q}_{n+1}(X_{n+1},A(X_{n+1}))-g(X_N^{n+1,A})\big\|_{M,2}\\
& \leq \big\|Q_{n+1}(X_{n+1},a^{opt}(X_{n+1}))-\inf_{a\in\mathbb{A}^\mathcal{X}}\big\{\big[-f(X_n,a(X_n))\Delta t+\hat{Q}_n(X_n,a(X_n))+g(X_N^{n,a})\big]\big\}\big\|_{M,2}\\
&\qquad+\big\|\inf_{a\in\mathbb{A}^\mathcal{X}}\big\{ \big[-f(X_n,a(X_n))\Delta t+\hat{Q}_n(X_n,a(X_n))+g(X_N^{n,a})\big]\big\}-\bar{Q}_{n+1}(X_{n+1},\hat{a}_n(X_{n+1}))-g(X_N^{n+1,\hat{a}_n})\big\|_{M,2}\,. \label{Q-barQ}
    \end{aligned}
\end{equation}
For the first part on the right of the above inequality
\begin{equation}
    \begin{aligned}
        &\mathbb{E}_M\big[\big|Q_{n+1}(X_{n+1},a^{opt}(X_{n+1}))-\inf_{a\in\mathbb{A}^\mathcal{X}}\big\{\big[-f(X_n,a(X_n))\Delta t+\hat{Q}_n(X_n,a(X_n))+g(X_N^{n,a})\big]\big\}\big|\big]\\
        &=\mathbb{E}_M\big[\big|-f(X_n,a^{opt}(X_n))\Delta t+Q_n(X_n,a^{opt}(X_n))-\inf_{a\in\mathbb{A}^\mathcal{X}}\big\{\big[-f(X_n,a(X_n))\Delta t+\hat{Q}_n(X_n,a(X_n))+g(X_N^{n,a})\big]\big\}\big|\big]\\
        & \leq \inf_{A\in\mathbb{A}^X}\Big\{[f]_L\Delta t\mathbb{E}_M\Big[\Big|A(X_n)-a^{opt}(X_n)\Big|\Big]+\mathbb{E}_M\Big[\Big|\hat{Q}_n(X_n,A(X_n))+g(X_N^{n,A})-Q_n(X_n,a^{opt}(X_n))\Big|\Big]\Big\}\\
&\leq \inf_{A\in\mathcal{A}_M}\Big\{[f]_L\Delta t\mathbb{E}_M\Big[\Big|A(X_n)-a^{opt}(X_n)\Big|\Big]+\mathbb{E}_M\Big[\Big|\hat{Q}_n(X_n,A(X_n))+g(X_N^{n,A})-Q_n(X_n,a^{opt}(X_n))\Big|\Big]\Big\} \,,\label{need combine 2} 
    \end{aligned}
\end{equation}
which can be justified by the same arguments used to prove the \textit{Step 2} of Lemma \ref{lemma 2}.

Notice that
\begin{equation*}
\begin{aligned}
    |Q_{n+1}(X_{n+1},a^{opt}(X_{n+1}))-\hat{Q}_{n+1}(X_{n+1},A(X_{n+1}))-g(X_N^{n+1,A})|\leq 2((N-n)\|f\|_{\infty}\Delta t+\|g\|_{\infty})\,.
\end{aligned}
\end{equation*}

Plug (\ref{need explain}), (\ref{need combine 2}) and above inequality to (\ref{Q-barQ}), we have
\begin{equation}
    \begin{aligned}
        &\inf_{A\in\mathcal{A}_M}\big\|Q_{n+1}(X_{n+1},a^{opt}(X_{n+1}))-\bar{Q}_{n+1}(X_{n+1},A(X_{n+1}))-g(X_N^{n+1,A})\big\|_{M,2}\leq\\
        &\sqrt{\inf_{A\in\mathcal{A}_M}\Big\{[f]_L\Delta t\mathbb{E}_M\Big[\Big|A(X_n)-a^{opt}(X_n)\Big|\Big]+\mathbb{E}_M\Big[\Big|\hat{Q}_n(X_n,A(X_n))+g(X_N^{n,A})-Q_n(X_n,a^{opt}(X_n))\Big|\Big]\Big\}H}\\
&+\sqrt{\big(2\varepsilon_{n+1}^{esti}+\varepsilon_{n+1}^{approx}+Y(X_n^{(m)})\big)H}\,. \label{need combine 3}
    \end{aligned}
\end{equation}
where $H=2((N-n)\|f\|_{\infty}\Delta t+\|g\|_{\infty})$.

\textbf{Step 4}. \quad 
Combine (\ref{need explain}), (\ref{split two}), (\ref{need combine 1}), (\ref{need combine 2}) and (\ref{need combine 3}), the following holds
\begin{equation}
    \begin{aligned}
        &\big\|\hat{Q}_{n+1}(X_{n+1},A)+g(X_N^{n+1,A})-Q_{n+1}(X_{n+1},a^{opt}(X_{n+1}))\big\|_{M,2}\\&\leq \big\|\hat{Q}_{n+1}(X_{n+1},A)-\bar{Q}_{n+1}(X_{n+1},A)\big\|_{M,2}+\big\|\bar{Q}_{n+1}(X_{n+1},A(X_{n+1}))+g(X_N^{n+1,A})-Q_{n+1}(X_{n+1},a^{opt}(X_{n+1}))\big\|_{M,2}\\
        &\leq \mathcal{O}_\mathbb{P}\bigg(\gamma_M^4\frac{K_Mlog(M)}{M}+\inf_{\Phi\in\mathcal{Q}}\mathbb{E}\Big[\Big|\Phi(X_{n+1},A(X_{n+1}))+g(X_N^{n+1,A})-{Q}_{n+1}(X_{n+1},a^{opt}(X_{n+1}))\Big|\Big]+\\
        & \sqrt{\inf_{A\in\mathcal{A}_M}\Big\{[f]_L\Delta t\mathbb{E}_M\Big[\Big|A(X_n)-a^{opt}(X_n)\Big|\Big]+\mathbb{E}_M\Big[\Big|\hat{Q}_n(X_n,A(X_n))+g(X_N^{n,A})-Q_n(X_n,a^{opt}(X_n))\Big|\Big]\Big\}H}\\        &+\sqrt{\big(2\varepsilon_{n+1}^{esti}+\varepsilon_{n+1}^{approx}+Y(X_n^{(m)})\big)H}\bigg).
    \end{aligned}
\end{equation}
By recursion, the following equation is obtained
\begin{equation}
    \begin{aligned}
        \inf_{A\in\mathcal{A}_M} &\mathbb{E}_M\Big[\Big|\hat{Q}_n(X_n,A(X_n))+g(X_N^{n,A})-Q_n(X_n,a^{opt})\Big|\Big]\\
&= \mathcal{O}_{\mathbb{P}}\Bigg(\Big(\gamma^4_M\frac{K_M \mbox{log}(M)}{M}\Big)^{\frac{1}{2n}}+\Big(\frac{\gamma_M\sqrt{\log{(M)}}}{\sqrt{M}}\big(\eta_M\gamma_M+[g]_L\frac{\rho_M-\rho_M^{N-n+1}}{1-\rho_M}\big)\Big)^{\frac{1}{2n}}\\
&+\sup_{1\leq k\leq n}\inf_{A\in\mathcal{A}_M}\inf_{\Phi\in\mathcal{Q}}\Big(\mathbb{E}_M\big[\big|\Phi(X_k,A(X_k))+g(X_N^{n,A})-Q_k(X_k,a^{opt}(X_k))\big|\big]\Big)^{\frac{1}{2n}}\\
&+\sup_{0\leq k\leq n-1}\inf_{A\in\mathcal{A}_M}\Big(\mathbb{E}_M\big[\big|A(X_k)-a^{opt}(X_k)\big|\big]\Big)^{\frac{1}{2n}}+\Big(\big|\hat{Q}_0(X_0,\hat{a}_0(X_0))-Q_0(X_0,a^{opt}(X_0)\big|\Big)^{\frac{1}{2n}}.\label{conclu}
    \end{aligned}
\end{equation}
\end{proof}

\section{Experiments}\label{experiments}

\setlength{\parindent}{2em} In this section, we apply TP-DDPG to compute the most probable transition paths for three examples, one-dimensional linear potential stochastic system, two-dimensional Maier-Stein system and three-dimensional lactose operon model. For the one-dimensional system and the other two nonlinear systems, our neural networks has one hidden layer has 15 neurons and 30 neurons respectively. In the actor network, we employ the Tanh activation function for the output layer and the ReLU activation function for the hidden layer. For the critic network, we utilize the Arctan activation function for the hidden layer and no activation function for the output layer. The learning rate of Adam optimizer is $10^{-3}$. All the experiments are run on A6000. Moreover, we employ penalty factors $\lambda$ in the predicted terminal cost $\mathcal{L}_{pred}$ to force the agent to reach the target terminal point at terminal time $T$.

\subsection{A Stochastic System with Linear Potential}

\setlength{\parindent}{2em} Consider the following stochastic differential equation with a linear potential,
\begin{equation}
    \begin{cases}
        dX_t=-X_tdt+dB_t\,,\\
        X_0=0, X_T=x_1\in\mathbb{R}\,.\label{linaer potential}
    \end{cases}
\end{equation}
The Onsager–Machlup action functional is by (\ref{OM functional})
$$S^{OM}_T(x,\dot{x})=\frac{1}{2}\int_0^T (|u_t|^2-1)dt.$$
By Euler–Lagrange equation, the most probable transition pathway satisfied the following two-point boundary value problem:
\begin{equation}
    \begin{cases}
        \ddot{x}=x\,,\\
        x(0)=x_0,x(T)=x_1.\label{two-point boundary value problem}
    \end{cases}
\end{equation}
By the method of constant variation, we have
\begin{equation}
    \begin{aligned}
        x(t)=-\frac{x_1-x_0e^{-T}}{e^{T}-e^{-T}}e^t+\frac{x_0e^{T}-x_1}{e^{T}-e^{-T}}e^{-t}\,.
    \end{aligned}
\end{equation}

For system (\ref{linaer potential}), $x=0$ is the unique stable point. We derive the most probable transition path from stable point $x_0=0$ to $x_1=2$ and from $x_0=0$ to $x_1=6$. Comparing the transition pathway obtained by our algorithm TP-DDPG with the solution of the two-point boundary value problem, the result is shown in figure \ref{departure from 0}. The blue line in figure \ref{0 to 2} and \ref{0 to 6} show the most probable transition pathway, and the purple dashed line stand for the true solution of the two-point boundary value problem (\ref{two-point boundary value problem}) by Euler–Lagrange equation. The pathway we plot is the average of the pathways corresponding to the 100th-300th episodes for figure \ref{0 to 2} and 300th-400th episodes for figure \ref{0 to 6}. It shows that our method works well in catching the dynamic system's most probable transition pathway.

\begin{figure*}[htb]
\centering
\subfigure[The most probable transition pathway from 0 to 2] {
 \label{0 to 2}
\includegraphics[width=0.22\columnwidth]{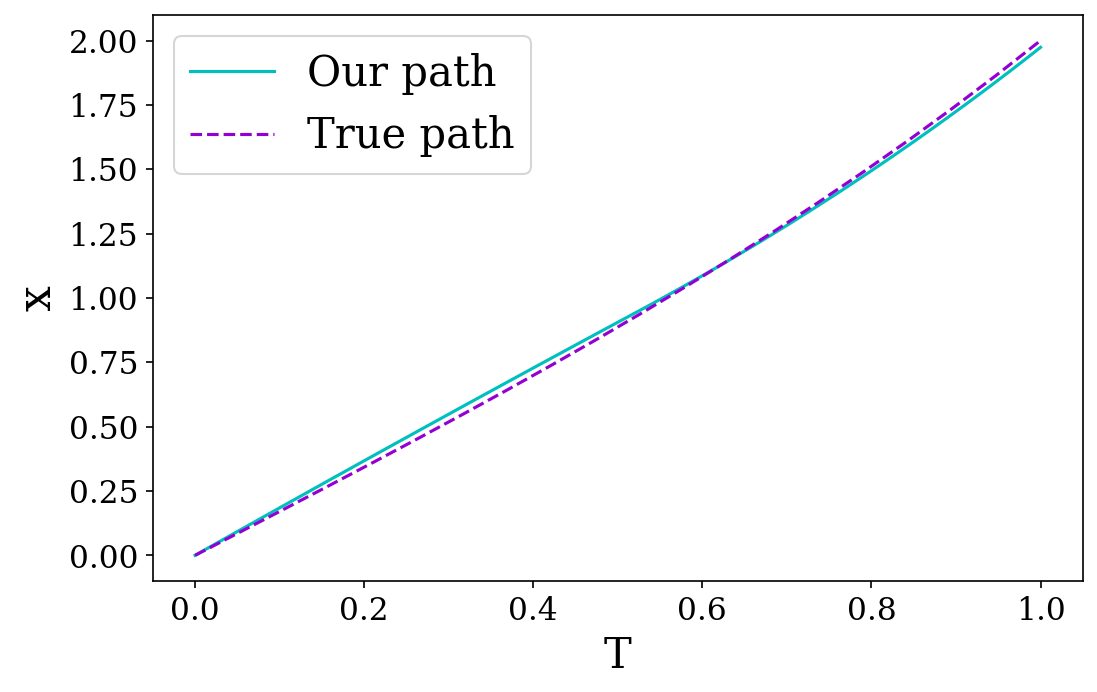}
}
\subfigure[Accumulative running cost] {
\label{0 to 2 actor}
\includegraphics[width=0.22\columnwidth]{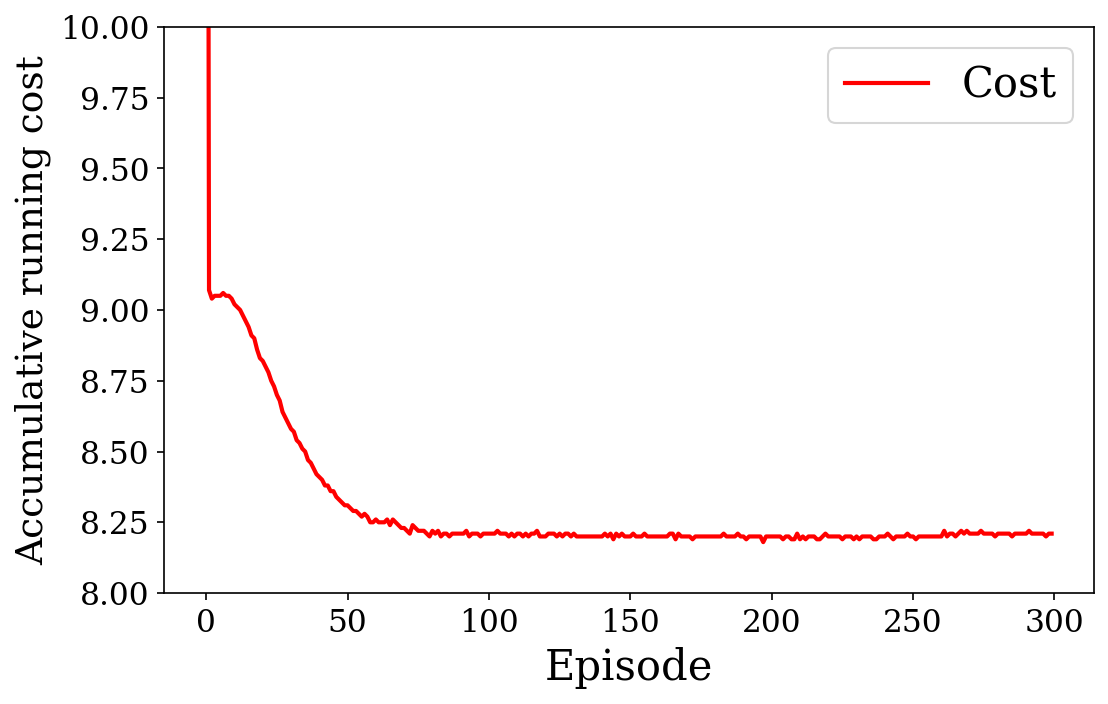}
}
\subfigure[Critic loss] {
\label{0 to 2 critic}
\includegraphics[width=0.22\columnwidth]{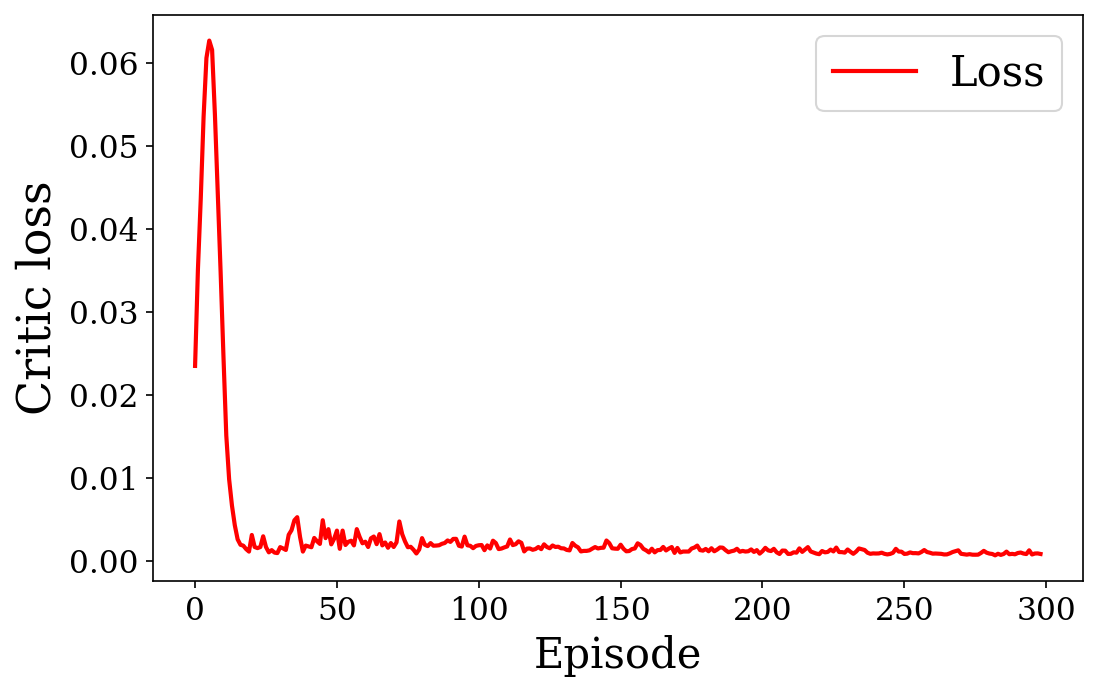}
}
\subfigure[Terminal loss] {
 \label{0 to 2 terminal}
\includegraphics[width=0.22\columnwidth]{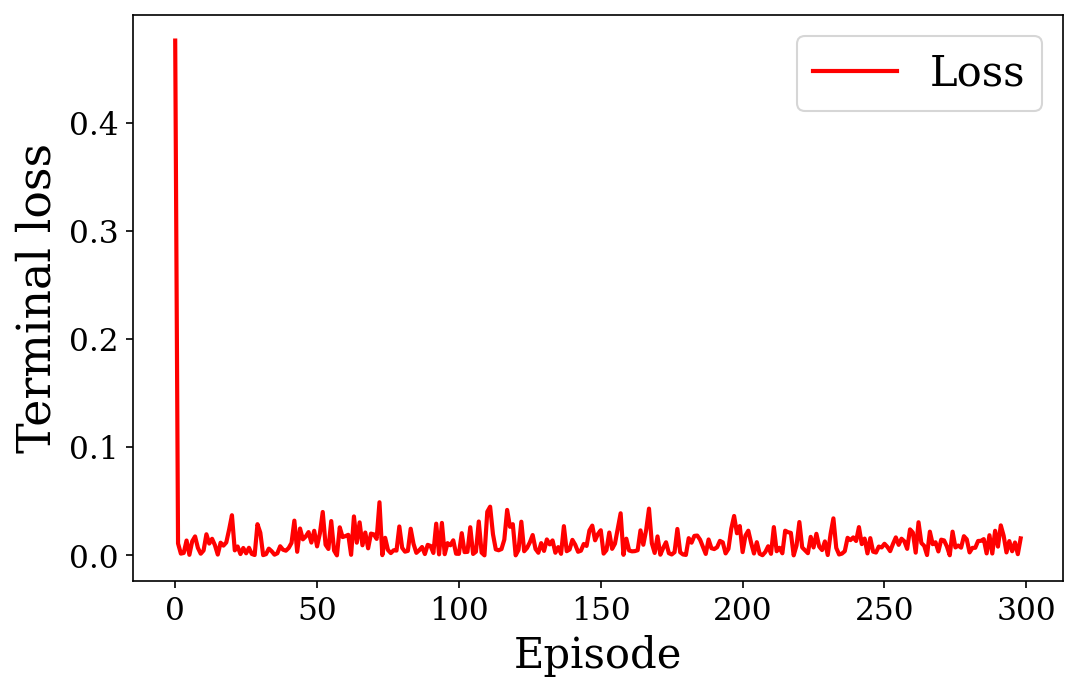}
}

\subfigure[The most probable transition pathway from 0 to 6] {
\label{0 to 6}
\includegraphics[width=0.22\columnwidth]{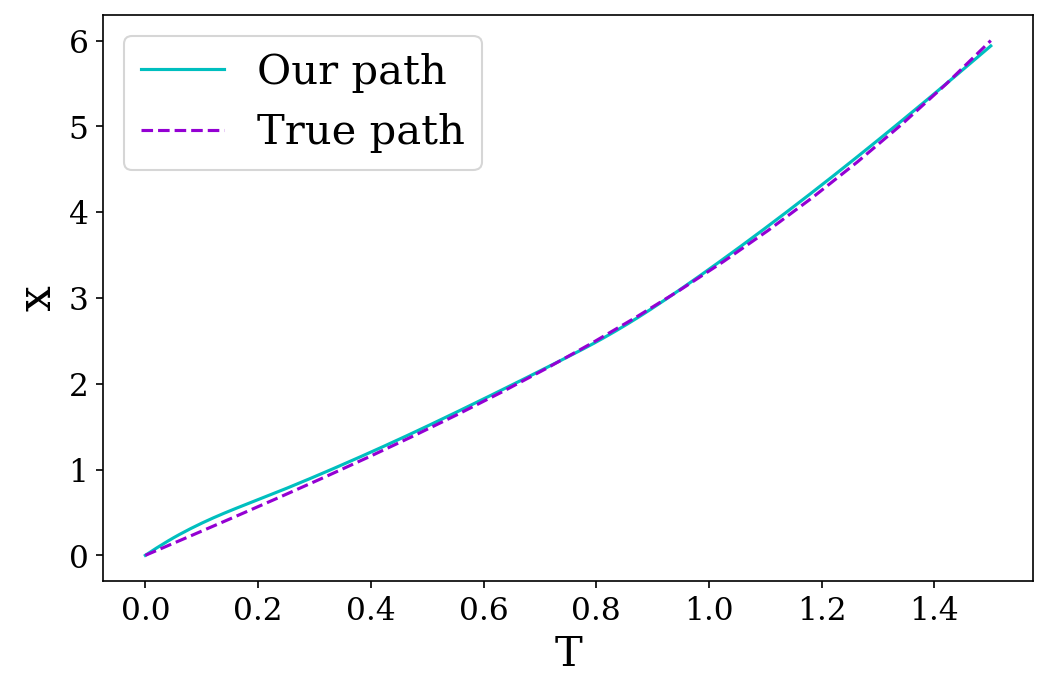}
}
\subfigure[Accumulative running cost] {
\label{0 to 6 actor}
\includegraphics[width=0.22\columnwidth]{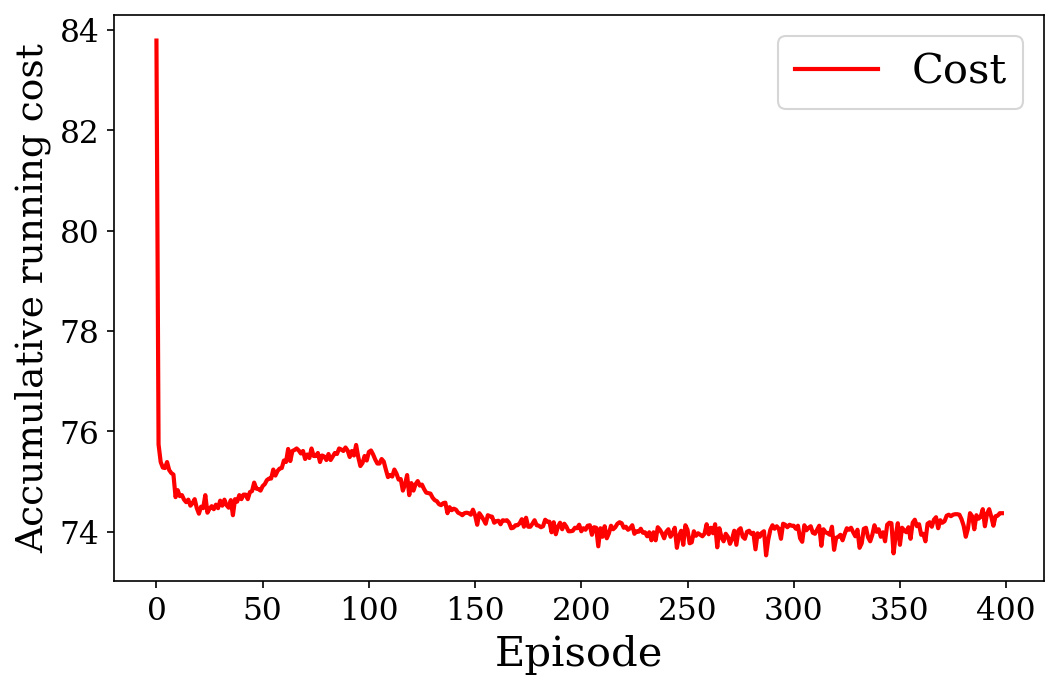}
}
\subfigure[Critic loss] {
\label{0 to 6 critic}
\includegraphics[width=0.22\columnwidth]{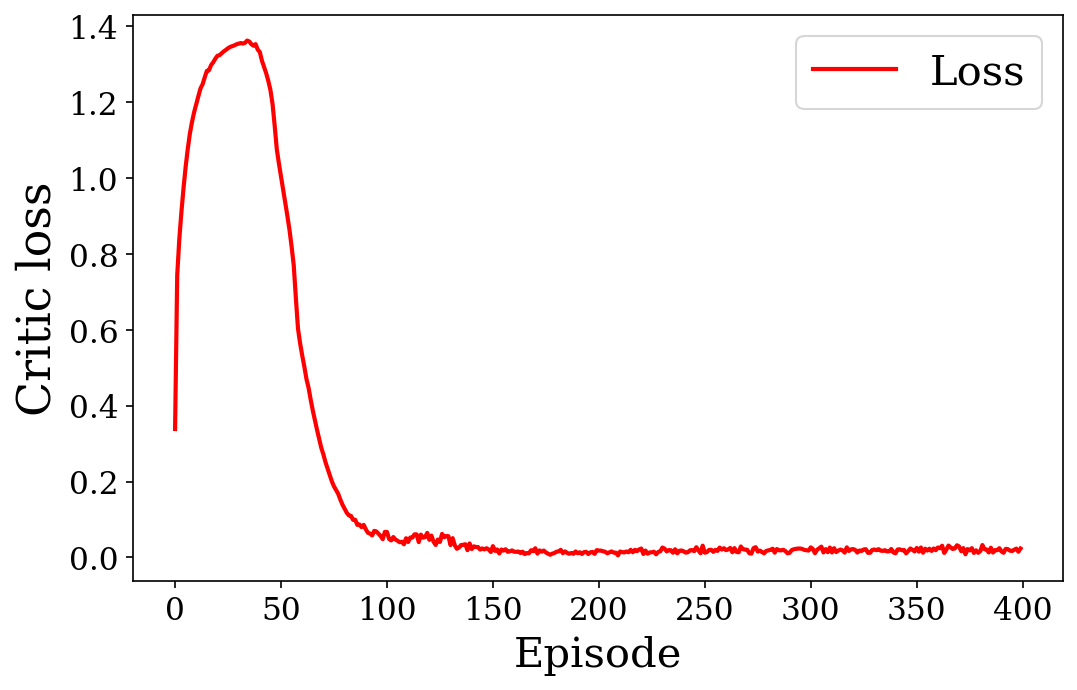}
}
\subfigure[Terminal loss] {
\label{0 to 6 terminal}
\includegraphics[width=0.22\columnwidth]{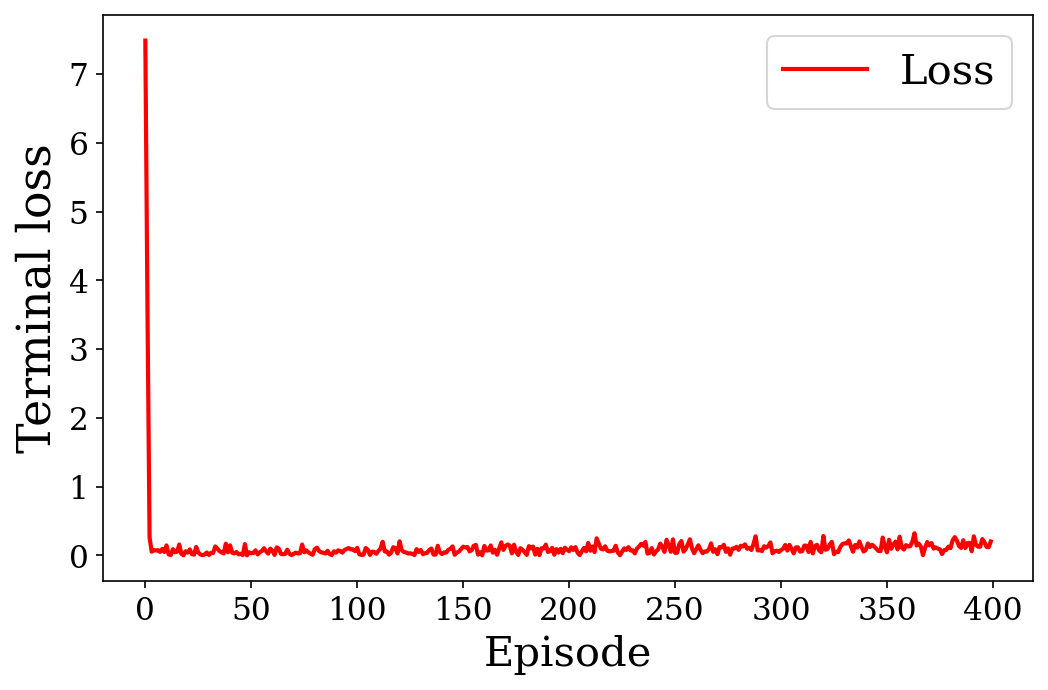}
}
\caption{The most probable transition pathway for the linear potential stochastic system: (a) departure from point $x_0=0$ to $x_1=2$, time $T=1.0$; (e) departure from point $x_0=0$ to $x_1=6$, time $T=1.5$. The blue line stands for the transition path learned by TP-DDPG, and the purple line represents the true solution.}
\label{departure from 0}
\end{figure*}

The accumulative running cost $\sum_{n=0}^{N-1}r_n=\sum_{n=0}^{N-1}f(X_n,a_n)\Delta t$ represents the Onsager–Machlup action functional, Figure \ref{0 to 2 actor}, \ref{0 to 6 actor} show that the running cost converges in 90-th and 160-th episodes for two cases.

Figure \ref{0 to 2 critic}, \ref{0 to 6 critic} shows the critic loss with respects to episodes, it can be seen that the critic loss reaches convergence at 130-th episodes and 170-th episodes for the two cases respectively. We find that the critical loss increases at first, possibly because the terminal loss dominates the actor network's update during this time. This leads the actor network to update toward minimizing terminal loss rather than decreasing the Q value estimated by the critic network.

The terminal loss in Figure \ref{0 to 6 terminal}, Figure \ref{0 to 2 terminal} show the $\mathcal{L}_{pred}=g(\hat{s}_N^{0,A})$ over episodes, which is the predicted terminal cost in timestep $n=0$ for every episode. That shows our algorithm can force the agent to reach the terminal point quickly in several episodes.

\begin{figure*}[htb]
\centering
\subfigure[The most probable transition pathway] {
 \label{beta=1 path}
\includegraphics[width=0.49\columnwidth]{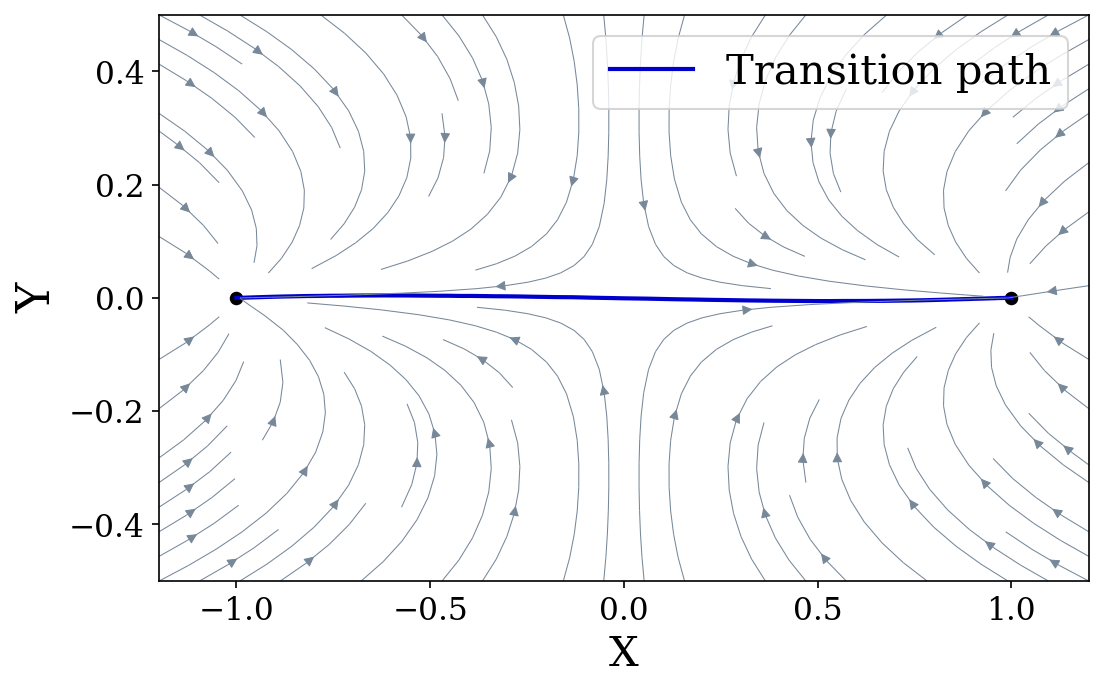}
}
\subfigure[Accumulative running cost] {
\label{beta=1 actor}
\includegraphics[width=0.48\columnwidth]{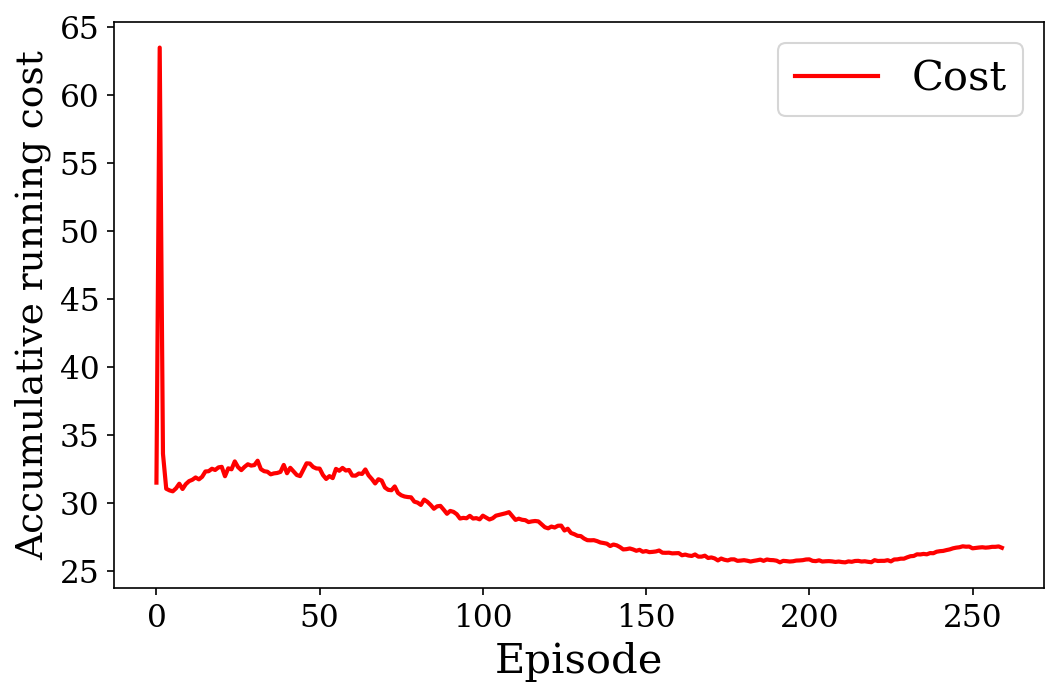}
}

\subfigure[Critic loss] {
\label{beta=1 critic}
\includegraphics[width=0.48\columnwidth]{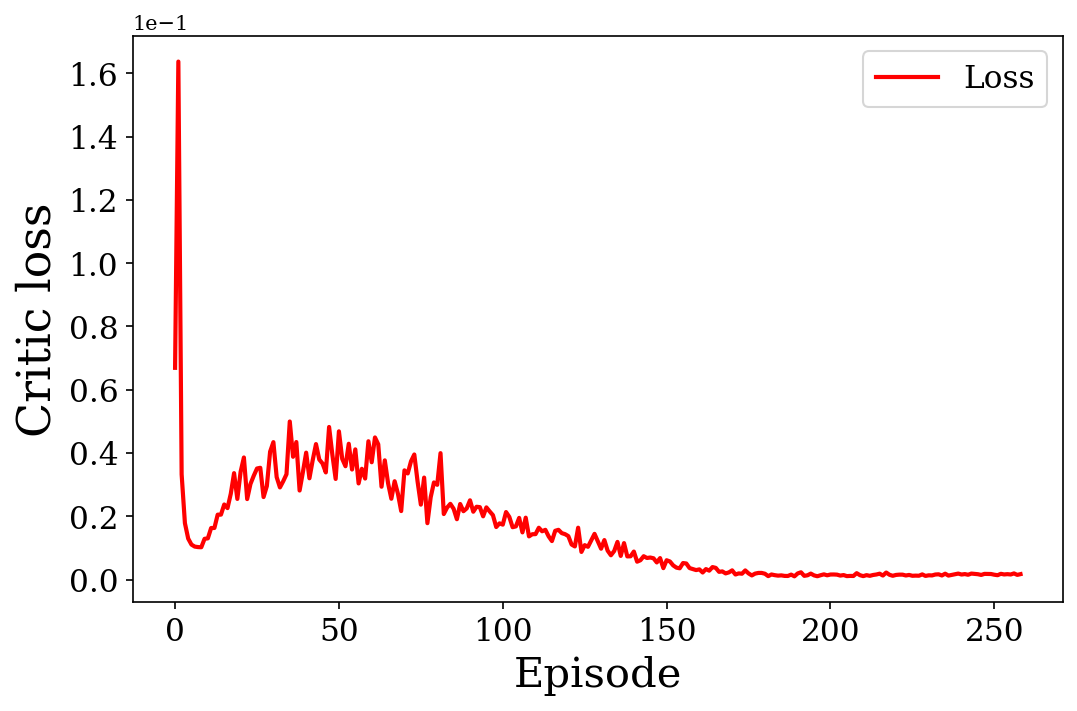}
}
\subfigure[Terminal loss] {
 \label{beta=1 terminal}
\includegraphics[width=0.48\columnwidth]{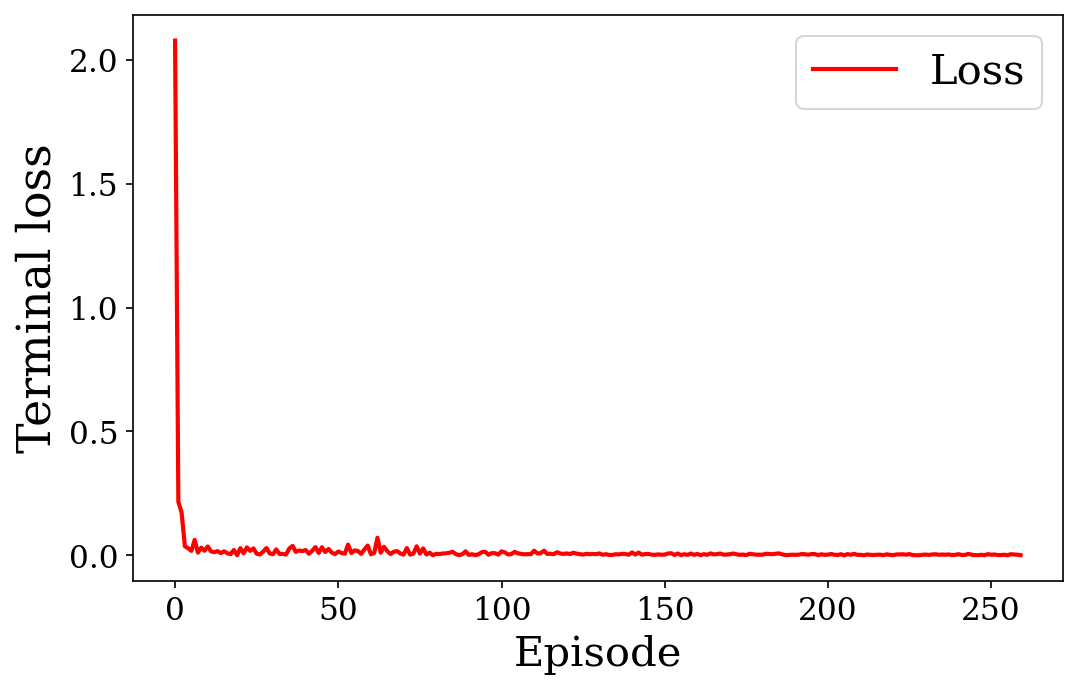}
}
\caption{ Maier-Stein system with $\beta=1$, noise intensity $\varepsilon=0.15$ and $T=5$.}
\label{Maier-Stein system beta=1}
\end{figure*}

\subsection{The Maier–Stein System}
\setlength{\parindent}{2em} Consider the Maier–Stein system \cite{1993The} in $\mathbb{R}^2$ that
    \begin{equation}
        \begin{cases}
            dX_t=b(X_t)dt+\varepsilon dB_t\,,\\
            X_0=(-1,0),X_T=(1,0)\in\mathbb{R}^2\,,
        \end{cases}    
    \end{equation}
where 
$$
\begin{aligned}
	b(X_t)=\left(\begin{array}
	{c}
        x - x^3-\beta xy^2 \\
	-(1+x^2)y
	\end{array}\right)\,,
	  \end{aligned}
$$
and the $\varepsilon$ is a positive constant representing the noise
intensity. Convert the problem into an optimal control problem (\ref{optimal control problem}), with the Onsager–Machlup action functional by (\ref{OM functional})
$$S^{OM}(X,\dot{X})=\frac{1}{2}\int_0^T [|u_t|^2+\nabla\cdot b(X_t)]dt\,.$$
This system has two metastable points $(\pm 1, 0)$ and one saddle point $(0, 0)$.

For $\beta=1$, we choose $T=5$, noise intensity $\varepsilon=0.15$ and the number of time interval $N=100$ to obtain the most probable transition pathway by TP-DDPG. We present the computed most probable transition pathway corresponding to the average of 200th-260th episodes in Figure \ref{beta=1 path}, which is a straight line through the saddle point $(0,0)$, connecting two metastable points $(-1,0)$ and $(1,0)$. Figure \ref{beta=1 actor} stands for the accumulative running $\sum_{t=0}^Tr(s_t,a_t)=\sum_{t=0}^T(a_t^2+\nabla \cdot b(s_t))\Delta t$ with respect to the episodes. Obviously, the accumulative running cost decreases as the number of episodes increases, which is consistent with the goal of our algorithm. That means the Onsager–Machlup action functional is getting smaller until converging. Figure \ref{beta=1 critic} shows the critic loss with respect to episodes, it clearly meets convergence at the 170-th episodes. That means the critic is able to approximate the accumulative running cost exactly after 170 episodes. From Figure \ref{beta=1 terminal} we can see that the agent could reach the terminal point robustly after several episodes.

\begin{figure*}[htb]
\centering
\subfigure[The most probable transition pathway] {
 \label{Maier-Stein system path}
\includegraphics[width=0.48\columnwidth]{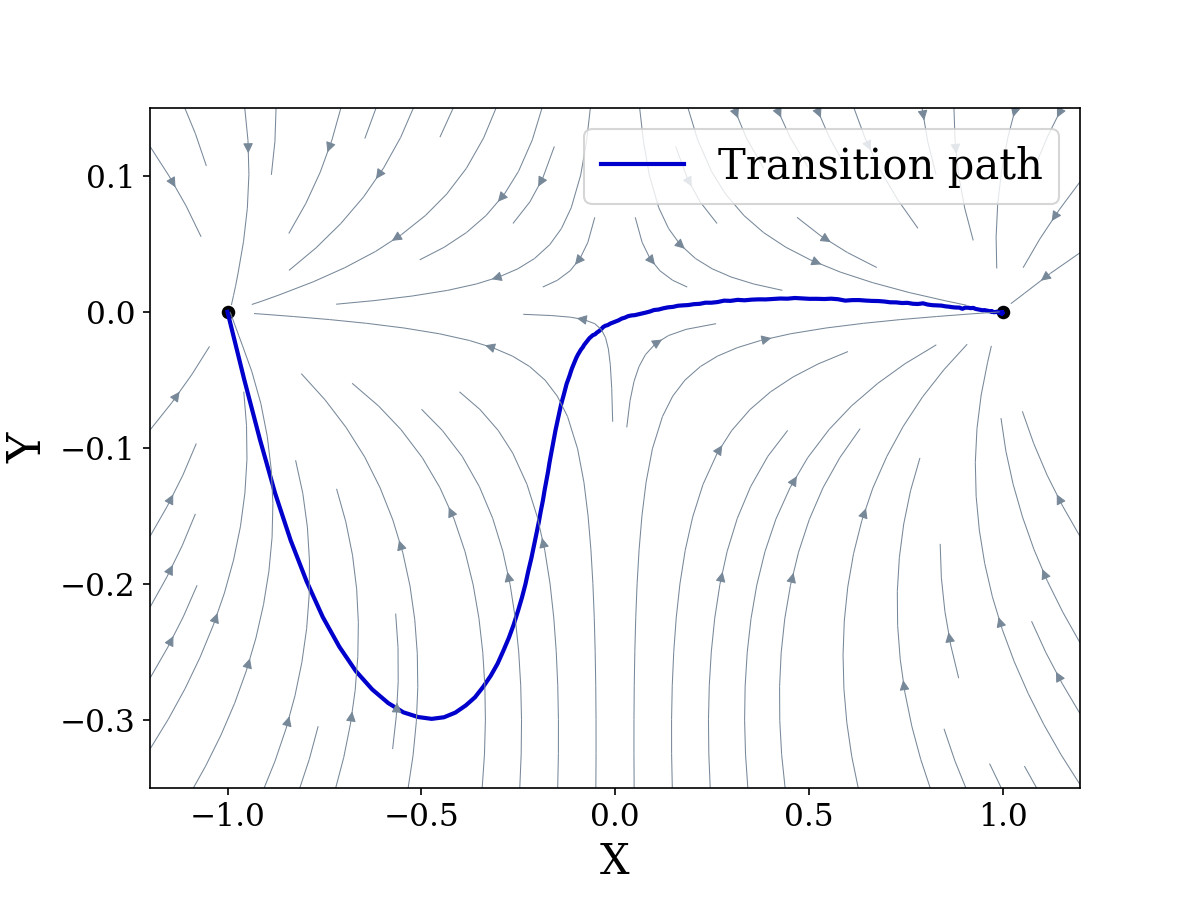}
}
\subfigure[Accumulative running cost] {
\label{Maier-Stein actor}
\includegraphics[width=0.48\columnwidth]{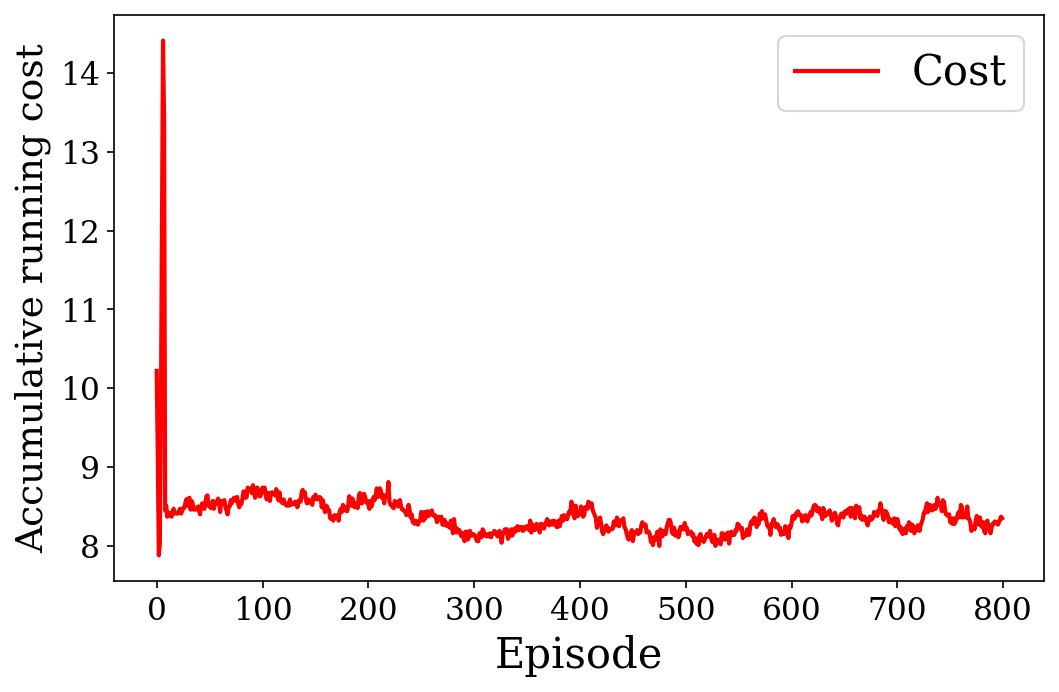}
}

\subfigure[Critic loss] {
\label{Maier-Stein critic}
\includegraphics[width=0.465\columnwidth]{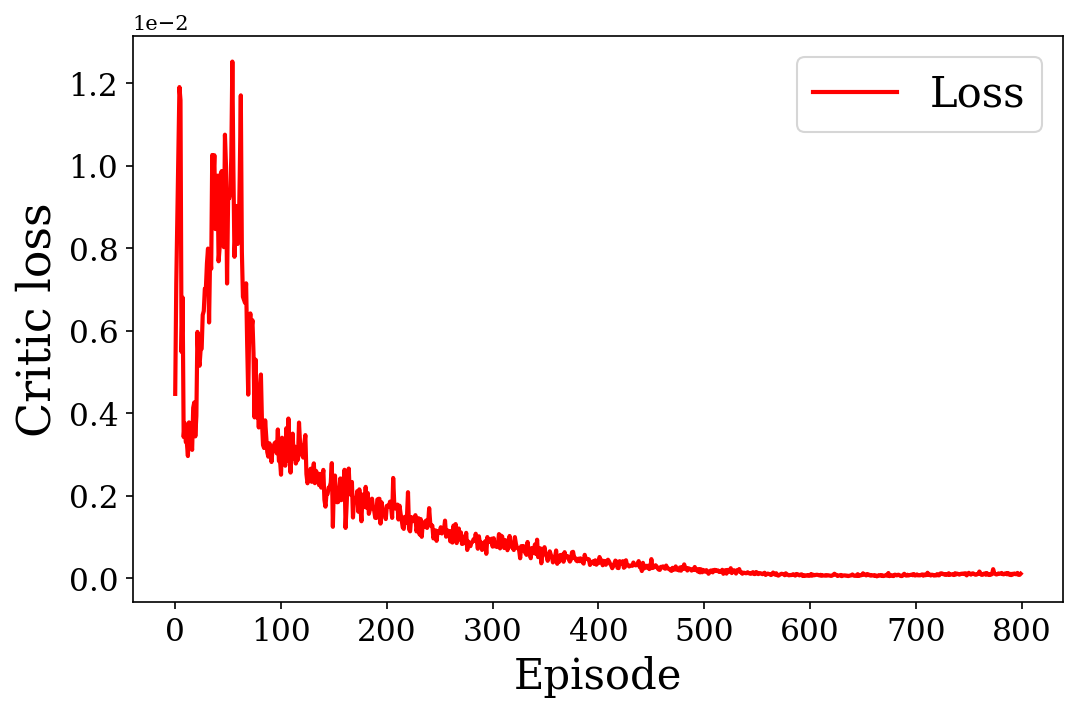}
}
\subfigure[Terminal loss] {
 \label{Maier-Stein terminal}
\includegraphics[width=0.48\columnwidth]{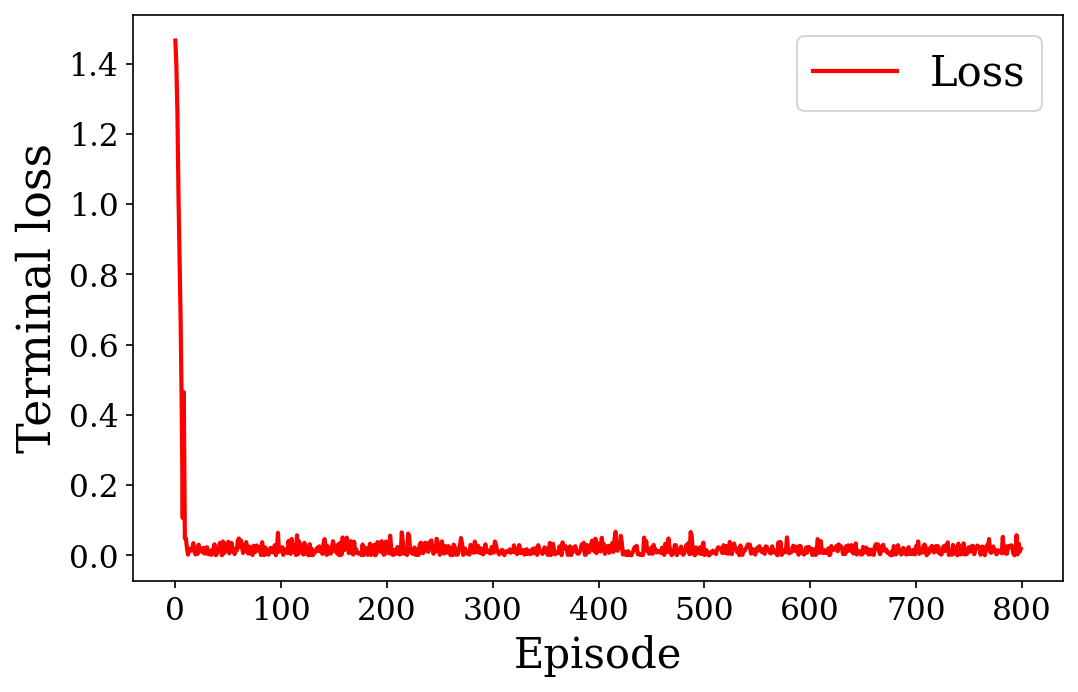}
}
\caption{ Maier-Stein system with $\beta=10$, noise intensity $\varepsilon=0.2$, and $T=10$.}
\label{Maier-Stein system}
\end{figure*}

For $\beta=10$, we choose $T=10$, noise intensity $\varepsilon=0.2$, and the number of time interval $N=200$ to obtain the most probable transition pathway by TP-DDPG. The results are shown in figure \ref{Maier-Stein system}. We present the pathway corresponding to the average of 600th-800th episodes as the critic loss becomes steadier after 500 episodes. In figure \ref{Maier-Stein system path}, the transition path departs from metastable point $(-1,0)$, passes through saddle point $(0,0)$ and then reaches metastable point $(1,0)$. Figure \ref{Maier-Stein terminal} shows that the transition pathway can reach the metastable point $x_1=(1,0)$ in 10 episodes with the confine of the terminal loss.

In the following, we consider the effects of the number of time interval $N$ on the value of terminal loss, under fixed $T$ and noise intensity $\varepsilon$, where $\beta=1$. The green dashed line, the pink dashed line, the blue dashed line and the red dashed line in the Figure \ref{terminal compare} correspond to the variation of the terminal prediction loss with the interactions of training at $N=20, 40, 80$ and $160$ respectively. Obviously, the terminal loss converges slower as $N$ increases. For $N=20$, the corresponding terminal loss converges earlier than the other three. Terminal loss of $N=160$ has the slowest convergence among these four.

Moreover, for $N=20,40,80,160$ and $\beta=1$, we demonstrate three statistical indicators of terminal loss in 20th-100th episodes in Table \ref{mean std},  considering the average terminal loss of $n=0,1,\cdots, 10$. Here all four scenarios have been converged after the 20-th episode. This suggests that the terminal loss will converge to a smaller value for the same timestep $n$ as $N$ decreases. That is consistent with the convergence analysis related to the terminal prediction loss $\Big(\frac{\gamma_M\sqrt{\log{(M)}}}{\sqrt{M}}[g]_L\frac{\rho_M-\rho_M^{N-n+1}}{1-\rho_M}\Big)^{\frac{1}{2n}}$ in Theorem \ref{theorem 1}, which implies that when the number of time intervals $N$ increases, the terminal prediction loss also increases under a given fixed value of $n$.

\begin{figure}[htb]
    \centering
    \includegraphics[width=0.8\columnwidth]{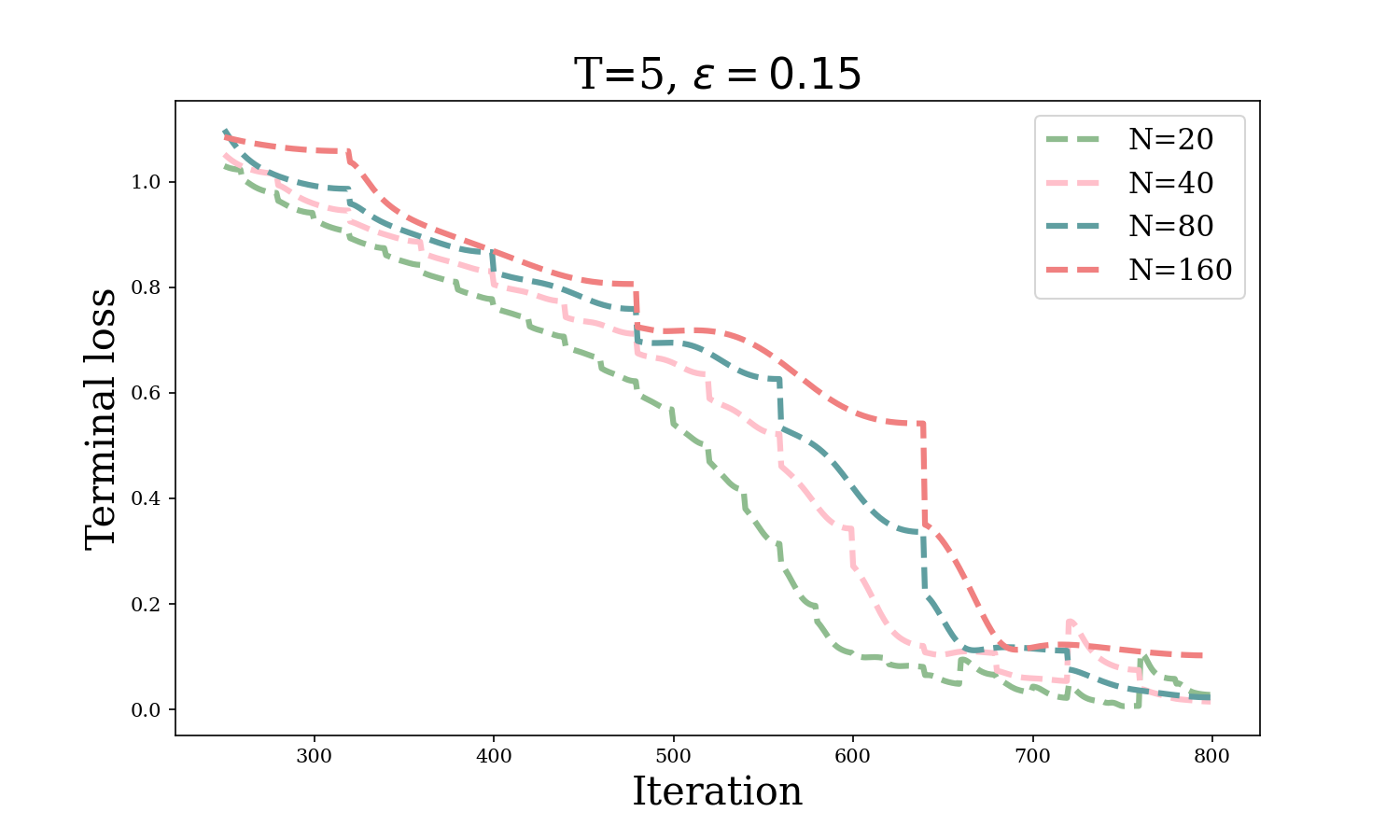}
    \caption{Terminal loss with respect to the number of predictions for different $N$ under the same $T$ and noise intensity.}
    \label{terminal compare}
\end{figure}

\begin{table*}[t]
    \centering
    \caption{The mean, standard deviation and maximum value of terminal loss for different $N$.}
    \label{mean std}
    \setlength{\tabcolsep}{6mm}
    \begin{tabular}{ccccccc}
\toprule
\textbf{\textbf{T}}   & $\bm{\varepsilon}$            & \textbf{N}  & \textbf{Mean}  & \textbf{Std} & \textbf{Max} \\ \midrule
\specialrule{0em}{1pt}{1pt}
\multirow{4}{*}{5} & \multirow{4}{*}{0.15} & 20 & $2.147\times 10^{-3}$ & $1.348\times 10^{-3}$  & $8.20\times 10^{-3}$  \\
\specialrule{0em}{1pt}{1pt}
                    &  & 40 & $2.637\times 10^{-3}$    & $2.536\times 10^{-3}$  & $1.38\times 10^{-2}$  \\
                    \specialrule{0em}{1pt}{1pt}
                    &  & 80 & $2.835\times 10^{-3}$    & $2.128\times 10^{-3}$  & $1.045\times 10^{-2}$  \\ 
                    \specialrule{0em}{1pt}{1pt}
                    &  & 160 & $3.424\times 10^{-3}$   &  $2.455\times 10^{-3}$ & $1.20\times 10^{-2}$  \\
                    \specialrule{0em}{1pt}{1pt}
\hline
\specialrule{0em}{1pt}{1pt}
\multirow{4}{*}{10} & \multirow{4}{*}{0.15} & 20 & $9.445\times 10^{-4}$ & $6.075\times 10^{-4}$  & $3.35\times 10^{-3}$  \\
\specialrule{0em}{1pt}{1pt}
                    &  & 40 & $1.812\times 10^{-3}$    & $1.184\times 10^{-3}$  & $7.65\times 10^{-3}$  \\
                    \specialrule{0em}{1pt}{1pt}
                    &  & 80 & $2.410\times 10^{-3}$    & $1.602\times 10^{-3}$  & $8.4\times 10^{-3}$  \\ 
                    \specialrule{0em}{1pt}{1pt}
                    &  & 160 & $2.984\times 10^{-3}$   &  $2.134\times 10^{-3}$ & $1.13\times 10^{-2}$  \\
\bottomrule
\end{tabular}
\end{table*}

\begin{figure*}[htb]
\centering
\subfigure[The most probable transition pathway] {
 \label{3 dim transition}
\includegraphics[height=0.38\columnwidth]{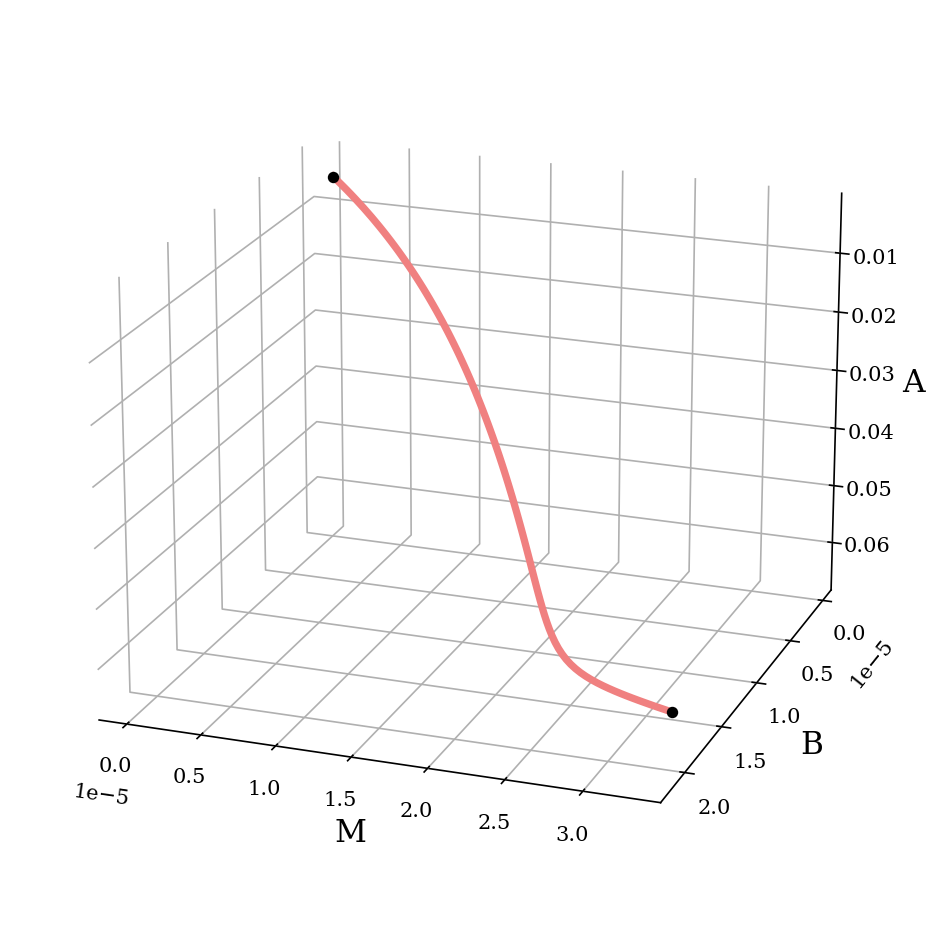}
}
\subfigure[Accumulative running cost] {
\label{3 dim total reward}
\includegraphics[width=0.48\columnwidth]{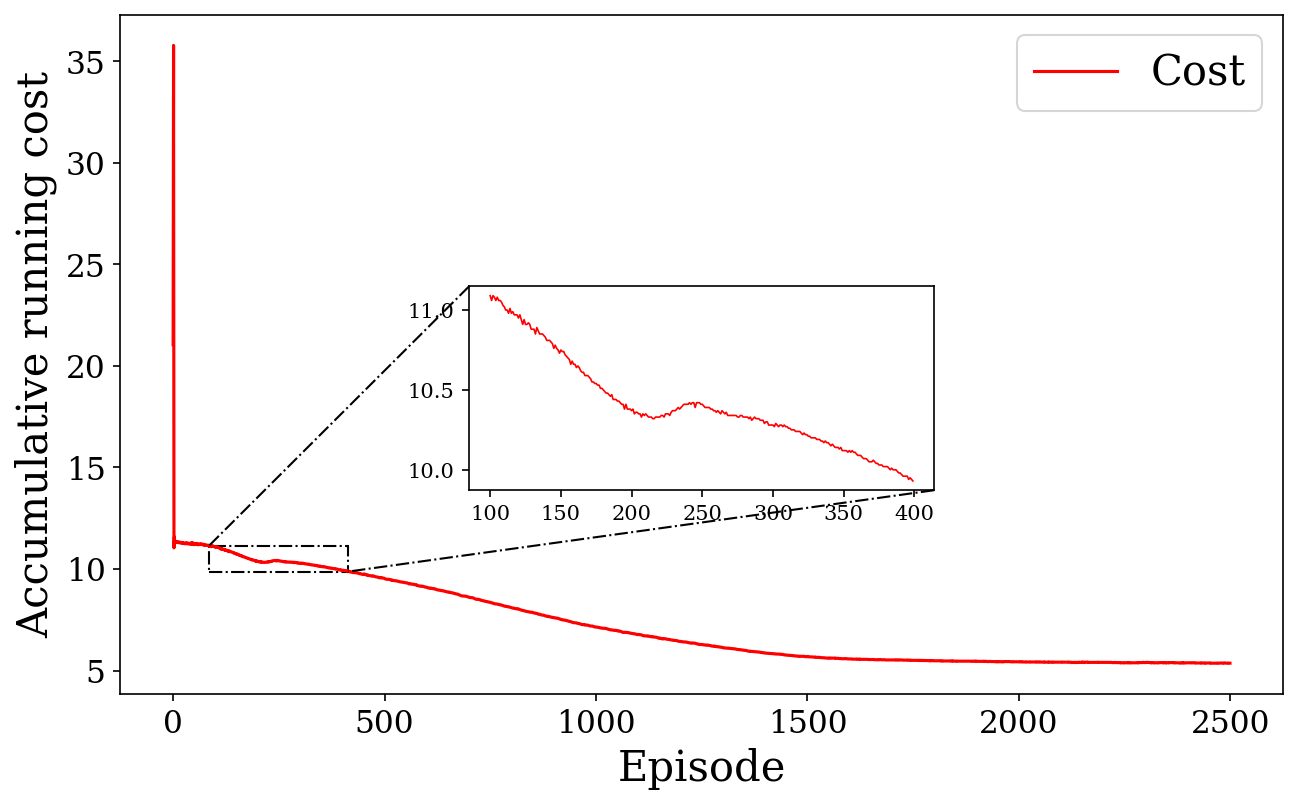}
}

\subfigure[Critic loss] {
\label{3 dim critic}
\includegraphics[width=0.48\columnwidth]{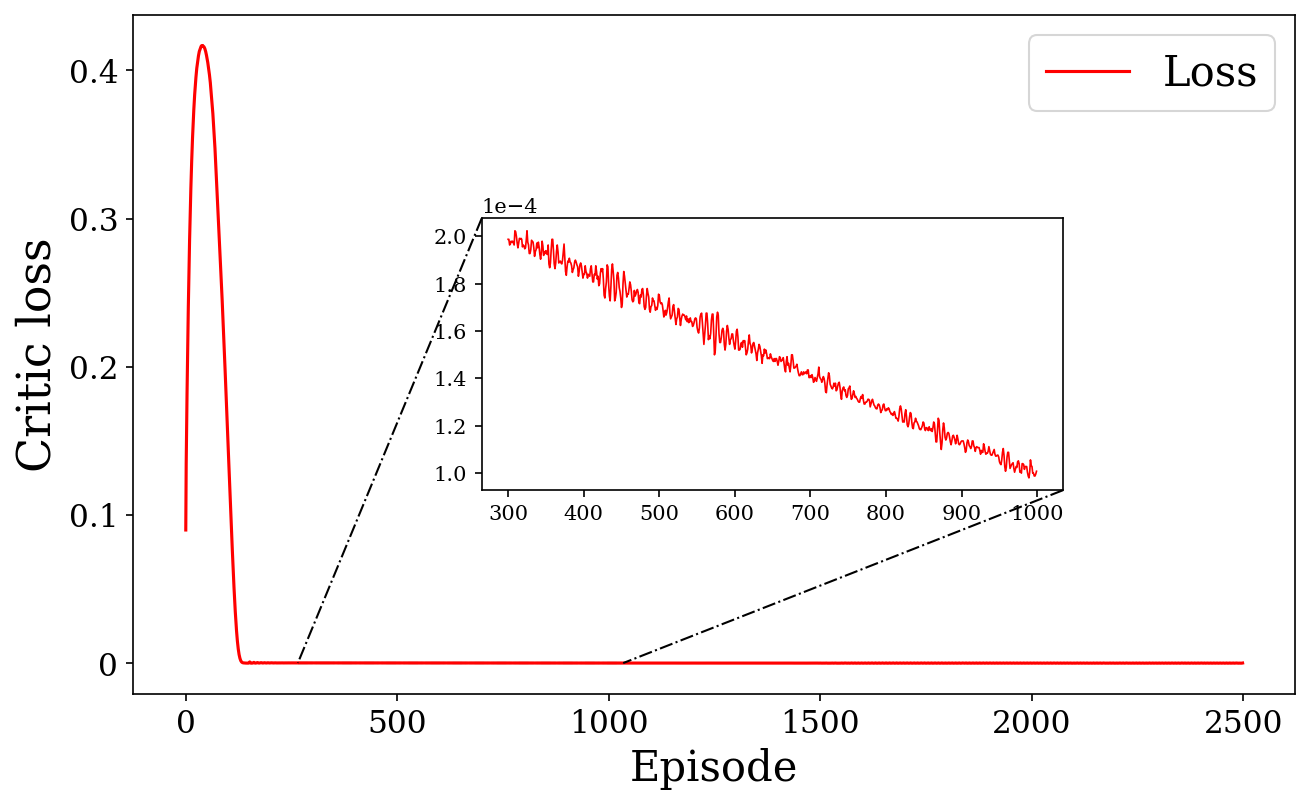}
}
\subfigure[Terminal loss] {
 \label{3 dim terminal}
\includegraphics[width=0.48\columnwidth]{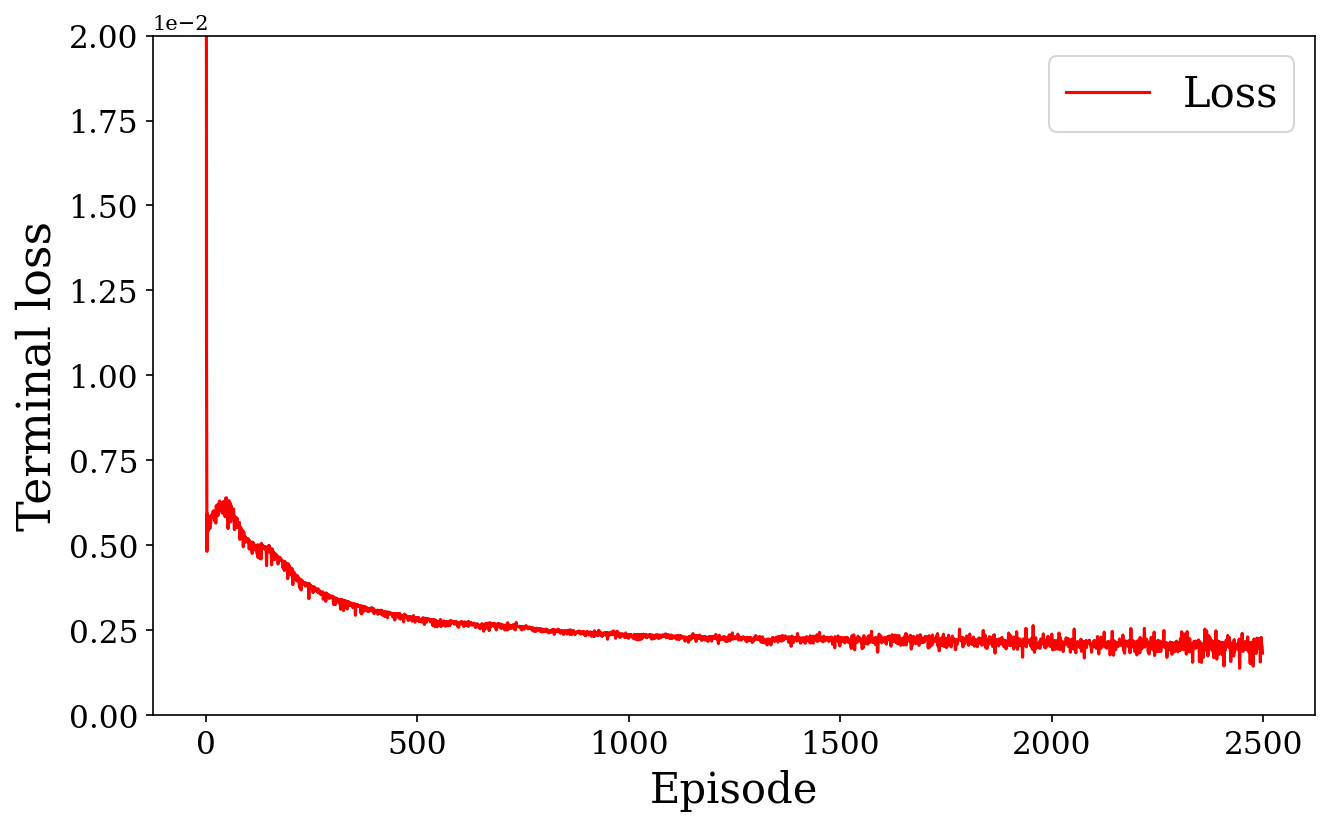}
}
\caption{Lactose operon model with noise intensity $\varepsilon=0.01$ and $T=3$.}
\label{3 dim}
\end{figure*}

\begin{figure}[htb]
    \centering
    \includegraphics[width=1.0\columnwidth]{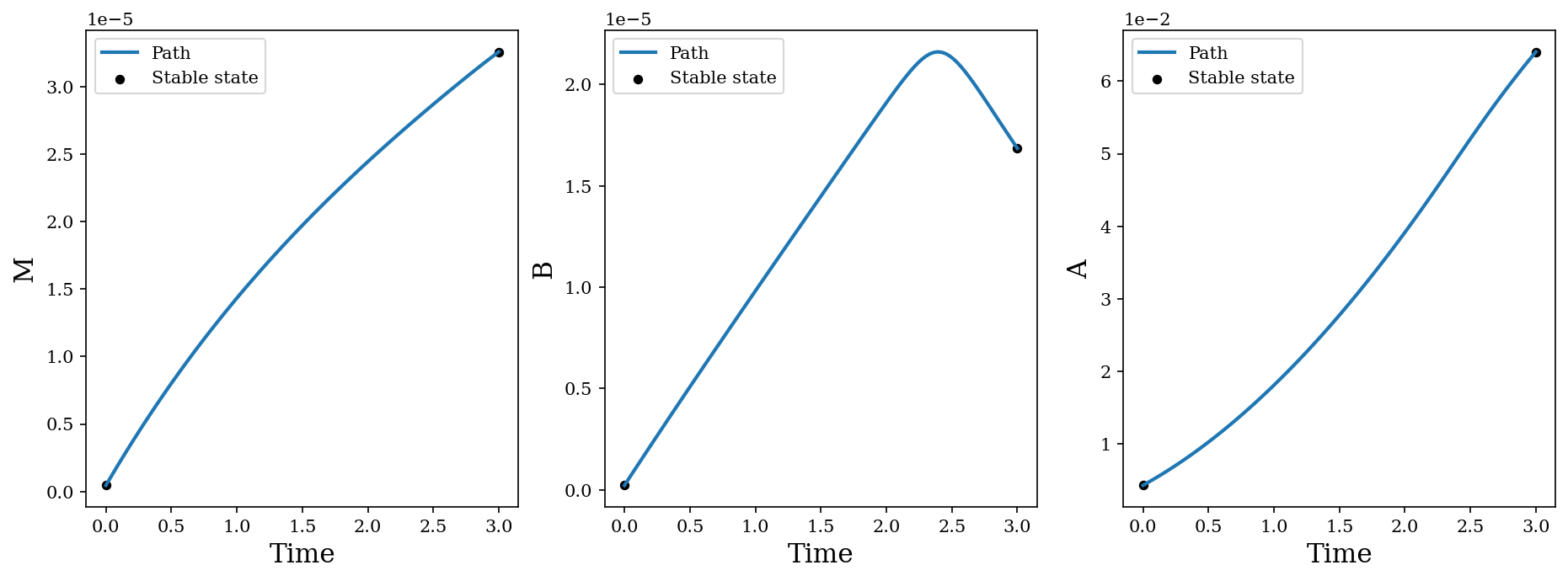}
    \caption{The three components of transition pathway over time.}
    \label{3 dim in 3}
\end{figure}

\begin{figure}[htb]
    \centering
    \includegraphics[width=0.58\columnwidth]{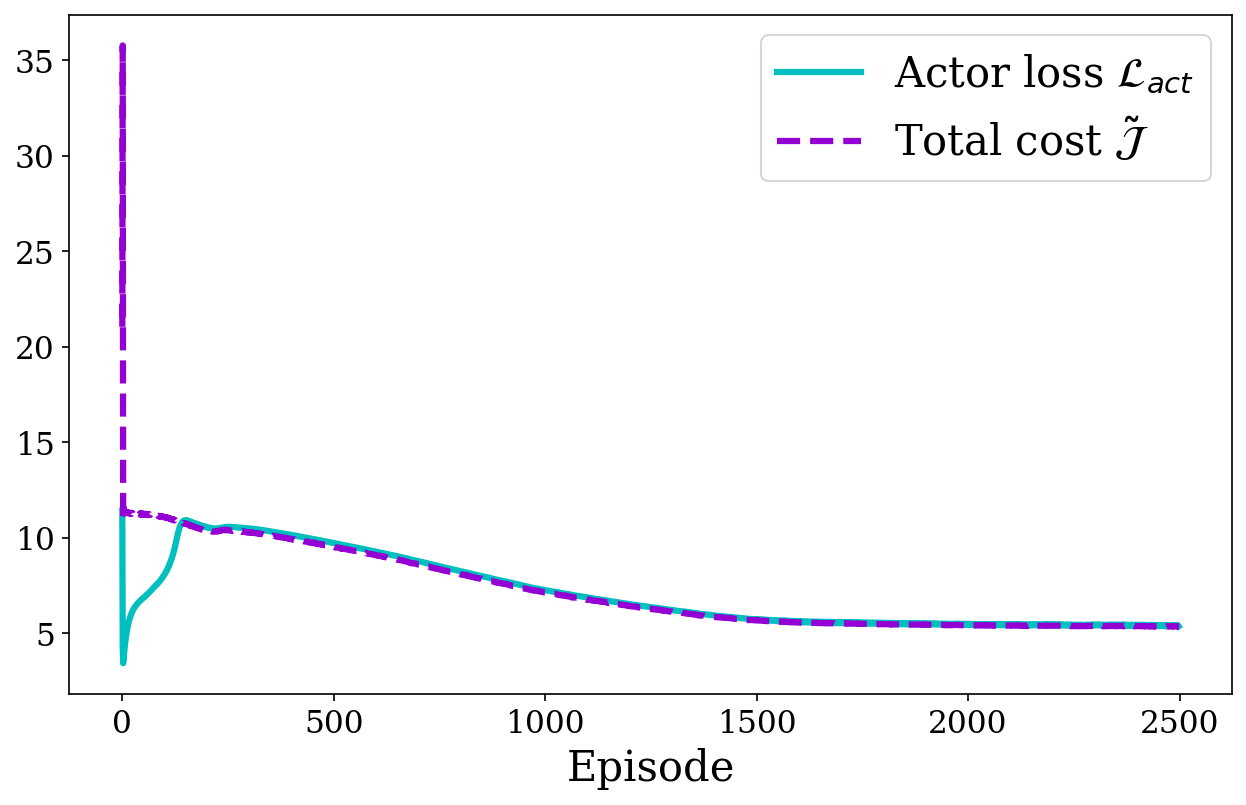}
    \caption{The actor loss $\mathcal{L}_{act}$ and total cost $\mathcal{\tilde{J}}$ with respect to episodes, where the actor loss is defined in (\ref{define actorl loss}), the total cost is defined in (\ref{define total cost}).}
    \label{compare Q and J}
\end{figure}

\subsection{Lactose Operon Model}
\setlength{\parindent}{2em} Yildirim and Mackey \cite{yildirim2003feedback} discover the bistability in the lactose operon dynamics. Yildirim and Necmettin \cite{yildirim2004dynamics} simplify the model by investigating only the function of the role of $\beta$-galactosidase in the operon regulation and ignoring that of lactose permease. Denote $M$ as mRNA concentration, $\beta$ as the galactosidase concentration, and A represents the concentration of allolactose. Consider the following three-dimensional dynamical system,
\begin{large}
\begin{equation}
    \begin{cases}
        \displaystyle\frac{dM}{dt}=\alpha_M\displaystyle\frac{1+K_1(e^{-\mu \tau_M}A_{\tau_M})^n}{K+K_1(e^{-\mu \tau_M}A_{\tau_M})^n}-\tilde{\gamma}_MM+\varepsilon dB_t\,,\\
        \\
        \displaystyle\frac{dB}{dt}=\alpha_B\displaystyle e^{-\mu \tau_B}M_{\tau_B}-\tilde{\gamma}_BB+\varepsilon dB_t\,,\\
        \\
        \displaystyle\frac{dA}{dt}=\alpha_AB\displaystyle\frac{L}{K_L+L}-\beta_AB\displaystyle\frac{A}{K_A+A}-\tilde{\gamma}_AA+\varepsilon dB_t\,,
    \end{cases}\label{lactose operon model}
\end{equation}
\end{large}
where $\tilde{\gamma}_M=\gamma_M+\mu$, $\tilde{\gamma}_B=\gamma_B+\mu$, $\tilde{\gamma}_A=\gamma_A+\mu$, and the $\varepsilon$ is a positive constant representing the noise intensity. The parameters are shown in Table \ref{parameter}.

The lactose operon dynamics is a bistable system, which is a significant complicated feature resulting from the nonlinearity of the associated processes. Investigating the transition pathways of the bistability (\ref{lactose operon model}) can help us to understand the dynamic behavior of this molecular regulatory system. 

 By formulating the problem as an optimal control problem (\ref{optimal control problem}), we can derive the Onsager-Machlup action functional
 $$S^{OM}(X,\dot{X})=\frac{1}{2}\int_0^T [|u_t|^2+\nabla\cdot b(X_t)]dt$$
 through the application of (\ref{OM functional}). The lactose operon model (\ref{lactose operon model}) has two stable states $(M_1^*,B_1^*,A_1^*)=(4.57\times 10^{-7}, 2.29\times 10^{-7}, 4.27\times 10^{-3})$ and $(M_2^*,B_2^*,A_2^*)=(3.28\times 10^{-5}, 1.65\times 10^{-5}, 6.47\times 10^{-2})$.
 
 Figure \ref{3 dim transition} shows the most probable transition pathway which represents the average path from 2000-th to 2500-th episodes from $(M_1^*,B_1^*,A_1^*)$ to $(M_2^*,B_2^*,A_2^*)$ determined by our method in three-dimensional space. The accumulative running cost, critic loss and terminal loss with respect to episodes are shown in Figure \ref{3 dim total reward}, \ref{3 dim critic} and \ref{3 dim terminal}. It shows that our accumulative running cost gets convergence after 1600 episodes, which stands for that we find a transition pathway corresponding to a minimization of the Onsager–Machlup Action Functional. The critic loss and terminal loss converge after 200-th and 1500-th episodes respectively. Moreover, we illustrate the three components of the transition pathway over time in Figure \ref{3 dim in 3}. The left one depicts the evolution of the mRNA concentration over time, the middle one and the right one display the changes in the concentration of $\beta $ galactosidase and allolactose as time goes by. In Figure \ref{compare Q and J}, we illustrate the actor loss defined in (\ref{define actorl loss}) as $\mathcal{L}_{act}=\hat{Q}(s_0,a_0)+\mathcal{L}_{pred}$ and the total cost defined in (\ref{define total cost}) as $\tilde{\mathcal{J}}=\sum_{i=0}^{N-1}f(X_i,u_i)\Delta t+g(X_N)$ with respect to episodes. It obvious that our networks could estimate the total cost $\tilde{\mathcal{J}}$ exactly after 500 episodes. This demonstrates that the TP-DDPG algorithm can converge to the minimum of this finite control issue (\ref{optimal control problem}).

\begin{table*}[htb]
\centering
\caption{The parameters for reduced lactose operon model.}
\label{parameter}
\begin{tabular}{cc}
   \toprule
   Parameter & Value  \\
   \midrule
   $\mu_{max}$ & $3.47\times 10^{-2}\,\,min^{-1}$ \\
   $\mu$ & $3.03\times 10^{-2}\,\,min^{-1}$ \\
   $\alpha_M$ & $997\,\, nM-min^{-1}$ \\
   $\alpha_B$ & $1.66\times 10^{-2}\,\,min^{-1}$\\
   $\alpha_A$ & $1.76 \times 10^{4}\,\,min^{-1}$\\
   $\gamma_M$ & $0.411\,\,min^{-1}$\\
   $\gamma_B$ & $8.33 \times 10^{-4}\,\,min^{-1}$ \\
   $\gamma_A$ & $1.35\times 10^{-2}\,\,min^{-1}$ \\
   $n$ & $2$ \\
   $K$ & $7200$ \\
   $K_1$ & $2.52\times 10^{-2}\,(\mu M)^{-2}$ \\
   $K_L$ & $0.97 \,mM$\\
   $K_A$ & $1.95 \,mM$\\
   $\beta_A$ & $2.15\times 10^4 \,\,min^{-1}$\\
   $\tau_M$ & $0.10\,min$\\
   $\tau_B$ & $2.00\,min$\\
   \bottomrule
\end{tabular}
\end{table*}

\section{Conclusion}

\setlength{\parindent}{2em} We propose an algorithm based on deep reinforcement learning that can be utilized to solve the finite-horizon control problem in a forward way, and this further leads to a new method to compute the most probable transition paths in stochastic dynamical systems. In this paper, deep reinforcement learning is used in conjunction with the terminal prediction method to deal with the most probable transition pathway problem in stochastic dynamical systems. The Onsager–Machlup functional measures the probability of rare events and we convert the transition path issue into an optimal control problem with a finite horizon. In addition, the convergence analysis of the proposed algorithm is performed. Moreover, we conduct three experiments to illustrate the effectiveness of our algorithm. In the one-dimensional linear potential stochastic system, we compare the transition path obtained by our algorithm and the true solution of two-point boundary value problem. The result illustrates that our method works well in catching the stochastic dynamic system’s most probable transition pathway. In the three-dimensional lactose operon model, it can be seen that our method can be utilized to deal with high dimensional transition pathway problems. Compared to the method in \cite{wei2022optimal}, our approach requires fewer computational costs and works more efficiently. This method in \cite{wei2022optimal} requires a grid for the full space including time intervals with the number of residual points $N_T = 10000$, while our method solely requires the partitioning of time intervals much less than $N_T$ (Table \ref{mean std}). For different stochastic dynamical systems, the method proposed in \cite{chen2023detecting} requires deriving different Pontryagin’s Maximum Principle associated with the corresponding systems, while we could handle these by directly substituting the environment function of TP-DDPG.

\section*{ACKNOWLEDGMENTS}
We would like to thank Huifang Huang, Pengbo Li and Wei Wei for their helpful discussions. This work was supported by the National Key Research and Development Program of China (No. 2021ZD0201300), the National Natural Science Foundation of China (No. 12141107), the Fundamental Research Funds for the Central Universities (5003011053).

\section{CRediT authorship contribution statemen}
\textbf{Jin Guo:} Conceptualization, Methodology, Software, Writing – original draft, Formal analysis. \textbf{Ting Gao:} Methodology, Software, Formal analysis, Writing - review \& editing, Resources, Funding Support. \textbf{Peng Zhang:} Software, Visualization, Methodology, Formal analysis. \textbf{Jiequn Han:} Methodology, Formal analysis, Writing - review \& editing. \textbf{Jinqiao Duan:} Project administration, Writing - review \& editing, Supervision, Funding Support.

\section{Declaration of competing interest}
The authors declare that they have no known competing financial interests or personal relationships that could have appeared to influence the work reported in this paper.


\bibliographystyle{unsrt}
\bibliography{main}

\begin{appendices}
\section{Appendix 1}\label{appendix 1}
\textbf{Lemma} \,\,\,Let $(X,Y)$ be a random variable. Assume $|Y|\leq L$ a.s. and let $m(x)=\mathbb{E}[Y|X=x]$. Assume $Y-m(X)$ is sub-Gaussian in the sense that $\mathop{\mbox{max}}\limits_{m=1,2,\cdots,M}c^2\mathbb{E}[e^{(Y-m(X))^2/c^2}-1|X]\leq\sigma^2$ a.s. for some $c,\sigma >0$. Let $\gamma_M, L\geq 1$ and assume that the regression function is bounded by $L$ and that $\gamma_M\xrightarrow{M\rightarrow +\infty} +\infty$. Set 
\begin{equation*}
    \hat{m}_M:=\mathop{\mbox{argmin}}_{\Phi\in \mathcal{Q}_M}\frac{1}{M}\sum_{m=1}^M|\Phi(x_i)-Y_m|^2
\end{equation*}
for some $\mathcal{Q}_M$ of functions $\Phi:\mathbb{R}\rightarrow [-\gamma_M,\gamma_M]$ and some random variable $Y_1,\cdots,Y_M$ which are bounded by $L$, and denote
$$\Omega_g:=\Big\{f - g : f \in \mathcal{Q}_M,\frac{1}{M}\sum^M_{m=1}|f(x_m) - g(x_m)|^2\leq \frac{\delta_M}{\gamma_M^2}\Big\}.$$
Then there exist constants $c_1,c_2>0$ which depend only on $\sigma$ and $c$ such that for any $\delta_M>0$ with
\begin{equation*}
\begin{aligned}
    \delta_M \xrightarrow{M\rightarrow +\infty} 0, \frac{M\delta_M}{\gamma_M} \xrightarrow{M\rightarrow +\infty} +\infty \,, \\
    c_1\frac{\sqrt{M}\delta}{\gamma_M^2}\geq 
    \int_{c_2\delta/\gamma^2_M}^{\sqrt{\delta}}log\Big(\mathcal{N}_2\Big(\frac{u}{4\gamma_M},\Omega_g,x_1^M\Big)\Big)^{\frac{1}{2}}du\,,
    \end{aligned}
\end{equation*}
for all $\delta\geq \delta_M$ and all $g\in \mathcal{Q}_M\cup \{m\}$ we have as $M\rightarrow +\infty$:
\begin{equation*}
    \mathbb{E}[|\hat{m}_M(X)-m(X)|^2]=\mathcal{O}_\mathbb{P}\Big(\delta_M+\inf_{\Phi\in\mathcal{Q}_M}\mathbb{E}[|\Phi(X)-m(X)|^2]\Big)\,.
\end{equation*}
\begin{proof}
    The proof is in \cite{kohler2006nonparametric} (Corollary 1).
\end{proof}

In the proof of \textbf{Theorem} \ref{theorem 1}, let $m(X)=\bar{Q}_{n+1}$,  $Y=-f(X_n,\hat{a}_n(X_n)+\xi_n)+\hat{Q}_n(X_n,\hat{a}_n(X_n)+\xi_n)$, and $\hat{m}(X)=\hat{Q}_{n+1}$. Then the following holds:
\begin{equation*}
    \begin{aligned}
        |Y-m(X)|&=|-f(X_n,\hat{a}_n(X_n)+\xi_n)+\hat{Q}_n(X_n,\hat{a}_n(X_n)+\xi_n)-\bar{Q}_{n+1}|\\
        &\leq |[f]_L\xi_n+\eta_M\gamma_M\xi_n|\,.
    \end{aligned}
\end{equation*}
Thus we have
\begin{equation*}
    \begin{aligned}
        c^2\mathbb{E}[e^{(Y-m(X))^2/c^2}-1|X]\leq  c^2\mathbb{E}[e^{([f]_L\xi_n+\eta_M\gamma_M\xi_n)^2/c^2}-1|X]
    \end{aligned}
\end{equation*}
Let $c=2\sigma_{act}([f]_L+\eta_M\gamma_M)$, since $\xi_n$ follows the distribution $N(0,\sigma^2_{act})$, we can obtain that
\begin{equation*}
    \begin{aligned}
         c^2\mathbb{E}[e^{([f]_L\xi_n+\eta_M\gamma_M\xi_n)^2/c^2}-1|X]&= 4\sigma^2_{act}([f]_L+\eta_M\gamma_M)^2\bigg[\int_{-\infty}^\infty \frac{1}{\sqrt{2\pi}\sigma}e^{\frac{x^2}{4\sigma^2_{act}}}\cdot e^{-\frac{x^2}{2\sigma^2_{act}}}dx-1\bigg]\\
        & =  4\sigma^2_{act}([f]_L+\eta_M\gamma_M)^2\bigg[\sqrt{2}\int_{-\infty}^\infty \frac{1}{\sqrt{2\pi}\sigma_{act}}e^{-\frac{x^2}{2\sigma^2_{act}}}dx-1\bigg]\\
        &=4\sigma^2_{act}([f]_L+\eta_M\gamma_M)^2(\sqrt{2}-1)\,.
    \end{aligned}
\end{equation*}
It is evident from the above analysis that $-f(X_n,\hat{a}_n(X_n)+\xi_n)+\hat{Q}_n(X_n,\hat{a}_n(X_n)+\xi_n)-\bar{Q}_{n+1}$ satisfies the sub-Gaussian condition.

\section{Appendix 2}\label{appendix 2}
\textbf{Lemma}\,\,\,For every $\varepsilon>0$, we have
\begin{equation*}
    \mathcal{N}_2\big(\varepsilon,\mathcal{Q}_M,(X_n^{(m)})_{1\leq m \leq M}\big)\leq\bigg(\frac{12e\gamma_M(K_M+1)}{\varepsilon}\bigg)^{(4(d+u)+9)K_M+1}\,,
\end{equation*}
where the class of neural networks $\mathcal{Q}_M$ is defined in section \ref{Convergence analysis}.
\begin{proof}
    We refer to Section 5 of \cite{kohler2010pricing} for proof. Replace $\mathcal{G}_2$ with $\mathcal{G}_2=\{\sigma_A(a^Tx +b): a \in\mathbb{R}^{d+u}, b \in \mathbb{R}\}$in the proof of Lemma 5.1 in \cite{kohler2010pricing}, where $\sigma_A$ is Arctan activate function, then we can obtain the above result.
\end{proof}

\section{Appendix 3}\label{appendix 3}
As the same procedure in Lemma \ref{lemma 1}, we introduce i.i.d. Rademacher random variables $(r_m)_{1\leq m\leq M}$. For $\varepsilon>0$, the following holds,
\begin{equation*}
\begin{aligned}
        &\mathbb{P}\bigg[\sup_{A\in\mathcal{A}_M}\Big|\frac{1}{M}\sum_{m=1}^M\big[f(X_n^{(m)},A(X_n^{(m)})-f(X_n^{(m)},\hat{a}_n(X_n^{(m)}))\big]\Big|>\varepsilon\bigg]\\
&\leq \mathbb{P}\bigg[\sup_{A\in\mathcal{A}_M}\Big|\frac{1}{M}\sum_{m=1}^M r_m\big[f(X_n^{(m)},A(X_n^{(m)})-f(X_n^{(m)},\hat{a}_n(X_n^{(m)}))\big]\Big|>\varepsilon\bigg]\,.
\end{aligned}
\end{equation*}
Consider a process subject to control 0 at and after timestep $n$, by (\ref{bound of A}) in \textit{Step 4} in the proof of Lemma \ref{lemma 1}, we have
\begin{equation*}
    \begin{aligned}        &\mathbb{E}\bigg[\sup_{A\in\mathcal{A}_M}\Big|\frac{1}{M}\sum_{m=1}^Mr_m\big[f(X_n^{(m)},A(X_n^{(m)})-f(X_n^{(m)},\hat{a}_n(X_n^{(m)}))\big]\Big|\bigg]\\
&\leq \mathbb{E}\bigg[\sup_{A\in\mathcal{A}_M}\Big|\frac{1}{M}\sum_{m=1}^Mr_m\big[f(X_n^{(m)},A(X_n^{(m)})-f(X_n^{(m)},0)\big]\Big|\bigg]+\mathbb{E}\bigg[\Big|\frac{1}{M}\sum_{m=1}^Mr_m\big[f(X_n^{(m)},\hat{a}_n(X_n^{(m)})-f(X_n^{(m)},0)\big]\Big|\bigg] \\
&\leq 2\mathbb{E}\bigg[\sup_{A\in\mathcal{A}_M}\Big|\frac{1}{M}\sum_{m=1}^Mr_m\big[f(X_n^{(m)},A(X_n^{(m)})-f(X_n^{(m)},0)\big]\Big|\bigg]\leq 2[f]_L\frac{1}{M}\mathbb{E}\Big[\sup_{A\in\mathcal{A}_M}\Big|\sum_{m=1}^Mr_m A(X_n^{(m)})\Big|\Big]\leq [f]_L\frac{2\gamma_M}{\sqrt{M}}\,.
    \end{aligned}
\end{equation*}

\section{Appendix 4} \label{proof of lemma 1}
The \textbf{proof of Lemma \ref{lemma 1}} is demonstrated in four steps.
 
\textbf{Step 1. Symmetrization by a ghost sample}. Define $(X^{'(m)}_k)_{1\leq m\leq M, 0\leq k\leq n}$ as a copy of $(X^{(m)}_k)_{1\leq m\leq M, 0\leq k\leq n}$, that means $(X^{'(m)}_k)_{1\leq m\leq M, 0\leq k\leq n}$ is generated from an independent copy of the exogenous noises $(\varepsilon^{'(m)}_k)_{1\leq m\leq M, 0\leq k\leq n}$. Moreover, $(X^{'(m)}_k)_{m=1}^M$ follows the same control $\hat{a}_k$ at time $k=0,1,\cdots,n-1$, and control $A$ at time n such that $X_{k+1}^{'(m)}=F(X_k^{'(m)},\hat{a}_k,\varepsilon^{'(m)}_k)$.

Take $\varepsilon >0$ and let $A^*\in\mathcal{A}_M$ satisfy 
\begin{equation}
    \begin{aligned}
        A^*=\mathop{argmax}\limits_{A\in \mathcal{A}_M}\Bigg|&\frac{1}{M}\sum_{m=1}^M\bigg[-f\big(X_n^{(m)},A(X_n^{(m)})\big)\Delta t+\hat{Q}_n\big(X_n^{(m)},A(X_n^{(m)}\big)+g\big(X_N^{(m),n,A}\big)]\\
        &-\mathbb{E}_M[-f(X_n,A(X_n))\Delta t+\hat{Q}_n(X_n,A(X_n)+g(X_N^{n,A})]\Bigg|>\varepsilon\,.
    \end{aligned}
\end{equation}

Applying Chebyshev’s inequality, we infer that
\begin{equation}
    \begin{aligned}
        \mathbb{P}_M\Bigg[\bigg|&\mathbb{E}_M\big[-f(X'_n,A^*(X'_n))\Delta t+\hat{Q}_n(X'_n,A^*(X'_n))+g(X_N^{'n,A^*})\big]\\
        &-\frac{1}{M}\sum_{m=1}^M\big[-f(X_n^{'(m)},A^*(X_n^{'(m)}))\Delta t+\hat{Q}_n(X_n^{'(m)},A^*(X_n^{'(m)}))+g(X_N^{'(m),n,A^*})\big]\bigg|>\frac{\varepsilon}{2}\Bigg]\\
&\leq \frac{1}{M(\varepsilon/2)^2}Var_M[-f(X_n^{'(m)},A^*(X_n^{'(m)}))\Delta t+\hat{Q}_n(X_n^{'(m)},A^*(X_n^{'(m)}))+g(X_N^{'(m),n,A^*})]\,.\label{Chebyshev’s inequality}
    \end{aligned}
\end{equation}

Since  $\Big|-f(X_n^{'(m)},A^*(X_n^{'(m)}))\Delta t+\hat{Q}_n(X_n^{'(m)},A^*(X_n^{'(m)}))+g(X_N^{'(m),n,A^*})\Big|\leq (N-n+1)\|f\|_{\infty}\Delta t+\|g\|_{\infty}$, we have
\begin{equation}
    \begin{aligned}
        &\mbox{Var}_M\bigg[-f(X_n^{'(m)},A^*(X_n^{'(m)}))\Delta t+\hat{Q}_n(X_n^{'(m)},A^*(X_n^{'(m)}))+g(X_N^{'(m),n,A^*})\bigg]\\
&=\mbox{Var}_M\Bigg[-f(X_n^{'(m)},A^*(X_n^{'(m)}))\Delta t+\hat{Q}_n(X_n^{'(m)},A^*(X_n^{'(m)}))+g(X_N^{'(m),n,A^*})-\frac{(N-n+1)\|f\|_{\infty}+\|g\|_{\infty}}{2}\Bigg]\,.
\end{aligned}
\end{equation}
When $-f(X_n^{'(m)},A^*(X_n^{'(m)}))\Delta t+\hat{Q}_n(X_n^{'(m)},A^*(X_n^{'(m)}))+g(X_N^{'(m),n,A^*})>0$, we have
\begin{equation}
\begin{aligned}
&Var_M\Bigg[-f(X_n^{'(m)},A^*(X_n^{'(m)}))\Delta t+\hat{Q}_n(X_n^{'(m)},A^*(X_n^{'(m)}))+g(X_N^{'(m),n,A^*})-\frac{(N-n+1)\|f\|_{\infty}+\|g\|_{\infty}}{2}\Bigg]\\
    &\leq \mathbb{E}\Bigg[\bigg(-f(X_n^{'(m)},A^*(X_n^{'(m)}))\Delta t+\hat{Q}_n(X_n^{'(m)},A^*(X_n^{'(m)}))+g(X_N^{'(m),n,A^*})-\frac{(N-n+1)\|f\|_{\infty}\Delta t+\|g\|_{\infty}}{2}\bigg)^2\Bigg]\\
    & (\mbox{Var}(X)=E(X^2)-(EX)^2\leq EX^2)\\
&\leq \Big(\frac{(N-n+1)\|f\|_{\infty}\Delta t+\|g\|_{\infty}}{2}\Big)^2\,.
    \end{aligned}
\end{equation}
If $-f(X_n^{'(m)},A^*(X_n^{'(m)}))\Delta t+\hat{Q}_n(X_n^{'(m)},A^*(X_n^{'(m)}))+g(X_N^{'(m),n,A^*})<0$, replace $-\frac{(N-n+1)\|f\|_{\infty}\Delta t+\|g\|_{\infty}}{2}$ by $+\frac{(N-n+1)\|f\|_{\infty}\Delta t+\|g\|_{\infty}}{2}$.

For $M>2\frac{((N-n+1)\|f\|_{\infty}+\|g\|_{\infty})^2}{\varepsilon^2}$, equation (\ref{Chebyshev’s inequality}) becomes
\begin{equation}
    \begin{aligned}
        \mathbb{P}_M\bigg[\Big|&\mathbb{E}_M[-f(X'_n,A^*(X'_n))\Delta t+\hat{Q}_n(X'_n,A^*(X'_n))+g(X_N^{'n,A^*})]\\
        &-\frac{1}{M}\sum_{m=1}^M[-f(X_n^{'(m)},A^*(X_n^{'(m)}))\Delta t+\hat{Q}_n(X_n^{'(m)},A^*(X_n^{'(m)}))+g(X_N^{'(m),n,A^*})]\Big|>\frac{\varepsilon}{2}\bigg]\\
&\leq \frac{1}{2}\,.
    \end{aligned}
\end{equation}
That is
\begin{equation}
    \begin{aligned}
        \mathbb{P}\bigg[\bigg|&\mathbb{E}_M[-f(X'_n,A^*(X'_n))\Delta t+\hat{Q}_n(X'_n,A^*(X'_n))+g(X_N^{'n,A^*})]\\
        &-\frac{1}{M}\sum_{m=1}^M[-f(X_n^{'(m)},A^*(X_n^{'(m)}))\Delta t+\hat{Q}_n(X_n^{'(m)},A^*(X_n^{'(m)}))+g(X_N^{'(m),n,A^*})]\bigg|\leq \frac{\varepsilon}{2}\bigg]\\
&\geq \frac{1}{2}\,.
    \end{aligned}
\end{equation}

Hence, the following holds,
\begin{equation}
    \begin{aligned}
        \mathbb{P}\Bigg[&\sup_{A\in \mathcal{A}_M}\bigg|\frac{1}{M}\sum_{m=1}^M\bigg[-f(X_n^{(m)},A(X_n^{(m)}))\Delta t+\hat{Q}_n(X_n^{(m)},A(X_n^{(m)})+g(X_N^{(m),n,A})\\
        &-(-f(X_n^{'(m)},A(X_n^{'(m)}))\Delta t+\hat{Q}_n(X_n^{'(m)},A(X_n^{'(m)})+g(X_N^{'(m),n,A}))\bigg]\bigg|>\frac{\varepsilon}{2}\Bigg]\\
\geq \mathbb{P}\Bigg[&\bigg|\frac{1}{M}\sum_{m=1}^M\big[-f(X_n^{(m)},A^*(X_n^{(m)}))\Delta t+\hat{Q}_n(X_n^{(m)},A^*(X_n^{(m)})+g(X_N^{(m),n,A^*})\\
&-(-f(X_n^{'(m)},A^*(X_n^{'(m)}))\Delta t+\hat{Q}_n(X_n^{'(m)},A^*(X_n^{'(m)})+g(X_N^{'(m),n,A^*}))\big]\bigg|>\frac{\varepsilon}{2}\Bigg]\\
\geq \mathbb{P}\Bigg[&\bigg|\frac{1}{M}\sum_{m=1}^M\big[-f(X_n^{(m)},A^*(X_n^{(m)}))\Delta t+\hat{Q}_n(X_n^{(m)},A^*(X_n^{(m)})+g(X_N^{(m),n,A^*})\big]\\
&-\mathbb{E}_M\big[-f(X_n,A^*(X_n))\Delta t+\hat{Q}_n(X_n,A^*(X_n)+g(X_N^{n,A^*})\big]\bigg|>\varepsilon,\\
&\bigg|\frac{1}{M}\sum_{m=1}^M\big[-f(X_n^{'(m)},A^*(X_n^{'(m)}))\Delta t+\hat{Q}_n(X_n^{'(m)},A^*(X_n^{'(m)})+g(X_N^{'(m),n,A^*}\big]\\
&-\mathbb{E}_M\big[-f(X_n,A^*(X_n))\Delta t+\hat{Q}_n(X_n,A^*(X_n)+g(X_N^{n,A^*})\big]\bigg|\leq\frac{\varepsilon}{2}\Bigg]\\
\geq \frac{1}{2}\mathbb{P}\Bigg[&\bigg|\frac{1}{M}\sum_{m=1}^M\big[-f(X_n^{(m)},A^*(X_n^{(m)}))\Delta t+\hat{Q}_n(X_n^{(m)},A^*(X_n^{(m)})+g(X_N^{(m),n,A^*})\big]\\
&-\mathbb{E}_M\big[-f(X_n,A^*(X_n))\Delta t+\hat{Q}_n(X_n,A^*(X_n)+g(X_N^{n,A^*})\big]\bigg|>\varepsilon\Bigg]\\
& (Due \,\,to\,\, independence \,\,of \,\,X_n^{(m)}\,\,and\,\, X_n^{'(m)})\,\,\\
\geq \frac{1}{2}\mathbb{P}\Bigg[&\sup_{A\in \mathcal{A}_M}\bigg|\frac{1}{M}\sum_{m=1}^M[-f(X_n^{(m)},A(X_n^{(m)}))\Delta t+\hat{Q}_n(X_n^{(m)},A(X_n^{(m)})+g(X_N^{(m),n,A})]\\
&-\mathbb{E}_M[-f(X_n,A(X_n))\Delta t+\hat{Q}_n(X_n,A(X_n)+g(X_N^{n,A})]\bigg|>\varepsilon\Bigg]\,.
    \end{aligned}
\end{equation}

That is
\begin{equation}
    \begin{aligned}
        \mathbb{P}\Bigg[&\sup_{A\in \mathcal{A}_M}\bigg|\frac{1}{M}\sum_{m=1}^M[-f(X_n^{(m)},A(X_n^{(m)}))\Delta t+\hat{Q}_n(X_n^{(m)},A(X_n^{(m)})+g(X_N^{(m),n,A})]\\
        &-\mathbb{E}_M[-f(X_n,A(X_n))\Delta t+\hat{Q}_n(X_n,A(X_n)+g(X_N^{n,A})]\bigg|>\varepsilon\Bigg]\\
\leq &2\mathbb{P}\Bigg[\sup_{A\in \mathcal{A}_M}\bigg|\frac{1}{M}\sum_{m=1}^M\big[-f(X_n^{(m)},A(X_n^{(m)}))\Delta t+\hat{Q}_n(X_n^{(m)},A(X_n^{(m)})+g(X_N^{(m),n,A})\\
&-(-f(X_n^{'(m)},A(X_n^{'(m)}))\Delta t+\hat{Q}_n(X_n^{'(m)},A(X_n^{'(m)})+g(X_N^{'(m),n,A}))\big]\bigg|>\frac{\varepsilon}{2}\Bigg]\,. \label{combine 1}
    \end{aligned}
\end{equation}

\textbf{Step 2}.    \quad Let $M'=\sqrt{2}\frac{(N-n+1)\|f\|_{\infty}\Delta t+\|g\|_{\infty}}{\sqrt{M}}$, then
\begin{equation}
    \begin{aligned}
        &\mathbb{E}\Bigg[\sup_{A\in \mathcal{A}_M}\Bigg|\frac{1}{M}\sum_{m=1}^M\Big[-f(X_n^{(m)},A(X_n^{(m)}))\Delta t+\hat{Q}_n(X_n^{(m)},A(X_n^{(m)})+g(X_N^{(m),n,A})\Big]\\
        &\qquad\qquad -\mathbb{E}_M\Big[-f(X_n,A(X_n))\Delta t+\hat{Q}_n(X_n,A(X_n)+g(X_N^{n,A})\Big]\Bigg|\Bigg]\\
&=\int_0^{\infty}\mathbb{P}\Bigg[\sup_{A\in \mathcal{A}_M}\bigg|\frac{1}{M}\sum_{m=1}^M\Big[-f(X_n^{(m)},A(X_n^{(m)}))\Delta t+\hat{Q}_n(X_n^{(m)},A(X_n^{(m)})+g(X_N^{(m),n,A})\Big]\\
 &\qquad\qquad-\mathbb{E}_M\Big[-f(X_n,A(X_n))\Delta t+\hat{Q}_n(X_n,A(X_n)+g(X_N^{n,A})\Big]\bigg|>\varepsilon\Bigg]d\varepsilon\\
&=\int_0^{M'}\mathbb{P}\Bigg[\sup_{A\in \mathcal{A}_M}\bigg|\frac{1}{M}\sum_{m=1}^M\big[-f(X_n^{(m)},A(X_n^{(m)}))\Delta t+\hat{Q}_n(X_n^{(m)},A(X_n^{(m)})+g(X_N^{(m),n,A})\big]\\
 &\qquad\qquad -\mathbb{E}_M\big[-f(X_n,A(X_n))\Delta t+\hat{Q}_n(X_n,A(X_n)+g(X_N^{n,A})\big]\bigg|>\varepsilon\Bigg]d\varepsilon\\
&\qquad+\int_{M'}^{\infty}\mathbb{P}\Bigg[\sup_{A\in \mathcal{A}_M}\bigg|\frac{1}{M}\sum_{m=1}^M\big[-f(X_n^{(m)},A(X_n^{(m)}))\Delta t+\hat{Q}_n(X_n^{(m)},A(X_n^{(m)})+g(X_N^{(m),n,A})\big]\\
 &\qquad\qquad-\mathbb{E}_M\big[-f(X_n,A(X_n))\Delta t+\hat{Q}_n(X_n,A(X_n)+g(X_N^{n,A})\big]\bigg|>\varepsilon\Bigg]d\varepsilon\\ 
&\leq \sqrt{2}\frac{(N-n+1)\|f\|_{\infty}\Delta t+\|g\|_{\infty}}{\sqrt{M}}
+4\int_{0}^{\infty}\mathbb{P}\Bigg[\sup_{A\in \mathcal{A}_M}\bigg|\frac{1}{M}\sum_{m=1}^M\big[-f(X_n^{(m)},A(X_n^{(m)}))\Delta t+\hat{Q}_n(X_n^{(m)},A(X_n^{(m)})\\
 &\qquad\qquad+g(X_N^{(m),n,A})-\big(-f(X_n^{'(m)},A(X_n^{'(m)}))\Delta t+\hat{Q}_n(X_n^{'(m)},A(X_n^{'(m)})+g(X_N^{'(m),n,A})\big)\big]\bigg|>\varepsilon\Bigg] \,.\\
  &\qquad(from\,\,(\ref{combine 1})\,\, and \,\,by \,\,variable \,\,substitution)\label{combine 2}
    \end{aligned}
\end{equation}

\textbf{Step 3. Introduce Rademacher random variables} \quad
Let $(r_m)_{1\leq m\leq M}$ be i.i.d. Rademacher random variables, which are independent uniform random variables taking values in $\{-1,\,\, +1\}$. For $\varepsilon>0$ the following holds
\begin{equation}
    \begin{aligned}
        &\mathbb{P}\Bigg[\sup_{A\in \mathcal{A}_M}\bigg|\frac{1}{M}\sum_{m=1}^M\bigg[-f(X_n^{(m)},A(X_n^{(m)}))\Delta t+\hat{Q}_n(X_n^{(m)},A(X_n^{(m)})+g(X_N^{(m),n,A})\\
        &\qquad\qquad-(-f(X_n^{'(m)},A(X_n^{'(m)}))\Delta t+\hat{Q}_n(X_n^{'(m)},A(X_n^{'(m)})+g(X_N^{'(m),n,A}))\bigg]\bigg|>\varepsilon\Bigg]\\
&\leq \mathbb{P}\Bigg[\sup_{A\in \mathcal{A}_M}\bigg|\frac{1}{M}\sum_{m=1}^Mr_m\bigg[-f(X_n^{(m)},A(X_n^{(m)}))\Delta t+\hat{Q}_n(X_n^{(m)},A(X_n^{(m)})+g(X_N^{(m),n,A})\\
&\qquad\qquad -(-f(X_n^{'(m)},A(X_n^{'(m)}))\Delta t+\hat{Q}_n(X_n^{'(m)},A(X_n^{'(m)})+g(X_N^{'(m),n,A}))\bigg]\bigg|>\varepsilon\Bigg]\\
&\leq \mathbb{P}\Bigg[\sup_{A\in \mathcal{A}_M}\bigg|\frac{1}{M}\sum_{m=1}^Mr_m\bigg[-f(X_n^{(m)},A(X_n^{(m)}))\Delta t+\hat{Q}_n(X_n^{(m)},A(X_n^{(m)})+g(X_N^{(m),n,A})\bigg]\bigg|>\frac{\varepsilon}{2}\Bigg]\\
&\quad +\mathbb{P}\Bigg[\sup_{A\in \mathcal{A}_M}\bigg|\frac{1}{M}\sum_{m=1}^Mr_m\bigg[-f(X_n^{'(m)},A(X_n^{'(m)}))\Delta t+\hat{Q}_n(X_n^{'(m)},A(X_n^{'(m)})+g(X_N^{'(m),n,A})\bigg]\bigg|>\frac{\varepsilon}{2}\Bigg]\\
&\leq 2\mathbb{P}\Bigg[\sup_{A\in \mathcal{A}_M}\bigg|\frac{1}{M}\sum_{m=1}^Mr_m\bigg[-f(X_n^{(m)},A(X_n^{(m)}))\Delta t+\hat{Q}_n(X_n^{(m)},A(X_n^{(m)})+g(X_N^{(m),n,A})\bigg]\bigg|>\frac{\varepsilon}{2}\Bigg]\,.
    \end{aligned}
\end{equation}

Through variable substitution, we have
\begin{equation}
    \begin{aligned}
        &\mathbb{E}\Bigg[\sup_{A\in \mathcal{A}_M}\bigg|\frac{1}{M}\sum_{m=1}^M\bigg[-f(X_n^{(m)},A(X_n^{(m)}))\Delta t+\hat{Q}_n(X_n^{(m)},A(X_n^{(m)})+g(X_N^{(m),n,A})\\
        &\qquad\qquad\qquad\qquad -(-f(X_n^{'(m)},A(X_n^{'(m)}))\Delta t+\hat{Q}_n(X_n^{'(m)},A(X_n^{'(m)})+g(X_N^{'(m),n,A}))\bigg]\bigg|\Bigg]\\
&\leq 4\mathbb{E}\Bigg[\sup_{A\in \mathcal{A}_M}\bigg|\frac{1}{M}\sum_{m=1}^Mr_m\bigg[-f(X_n^{(m)},A(X_n^{(m)}))\Delta t+\hat{Q}_n(X_n^{(m)},A(X_n^{(m)})+g(X_N^{(m),n,A})\bigg]\bigg|\Bigg]\,.  \label{combine 3}
    \end{aligned}
\end{equation}

\textbf{Step 4}.    \quad
Consider a process subject to control 0 at and after timestep $n$.
\begin{equation}
    \begin{aligned}
        &\mathbb{E}\Bigg[\sup_{A\in \mathcal{A}_M}\bigg|\frac{1}{M}\sum_{m=1}^Mr_m\big[-f(X_n^{(m)},A(X_n^{(m)}))\Delta t+\hat{Q}_n(X_n^{(m)},A(X_n^{(m)})+g(X_N^{(m),n,A})\big]\bigg|\Bigg]\\
&\leq \mathbb{E}\Bigg[\bigg|\frac{1}{M}\sum_{m=1}^Mr_m\big[-f(X_n^{(m)},0))\Delta t+\hat{Q}_n(X_n^{(m)},0)+g(X_N^{(m),n,0})\big]\bigg|\Bigg]\\
& +\mathbb{E}\Bigg[\sup_{A\in \mathcal{A}_M}\bigg|\frac{1}{M}\sum_{m=1}^Mr_m\big[-f(X_n^{(m)},A(X_n^{(m)})) \Delta t+\hat{Q}_n(X_n^{(m)},A(X_n^{(m)})+g(X_N^{(m),n,A})
 \\
 &\qquad\qquad-(-f(X_n^{(m)},0))\Delta t+\hat{Q}_n(X_n^{(m)},0)+g(X_N^{(m),n,0}))\big]\bigg|\Bigg]\,.\label{divide two parts}
    \end{aligned}
\end{equation}

For the first term of the right of (\ref{divide two parts}), by the Cauchy-Schwarz inequality, we have
\begin{equation}
    \begin{aligned}
        &\mathbb{E}\Bigg[\bigg|\frac{1}{M}\sum_{m=1}^Mr_m\big[-f(X_n^{(m)},0))\Delta t+\hat{Q}_n(X_n^{(m)},0)+g(X_N^{(m),n,0})\big]\bigg|\Bigg]\\
&\leq \frac{1}{M}\mathbb{E}\sqrt{\sum_{m=1}^Mr_m^2\cdot\sum_{m=1}^M\big[-f(X_n^{(m)},0))\Delta t+\hat{Q}_n(X_n^{(m)},0)+g(X_N^{(m),n,0})\big]^2}\\
&\leq \frac{1}{\sqrt{M}}\mathbb{E}\frac{\sum_{m=1}^M\big|-f(X_n^{(m)},0))\Delta t+\hat{Q}_n(X_n^{(m)},0)+g(X_N^{(m),n,0})\big|}{M}\\
&\leq \frac{1}{\sqrt{M}}\mathbb{E}\frac{M\mathop{max}\limits_{1\leq m\leq M}\big|-f(X_n^{(m)},0))\Delta t+\hat{Q}_n(X_n^{(m)},0)+g(X_N^{(m),n,0})\big|}{M}\\
&\leq \frac{(N-n+1)\|f\|_{\infty}\Delta t+\|g\|_{\infty}}{\sqrt{M}}\,. \label{the 1-th}
    \end{aligned}
\end{equation}

For the second term, by the Lipschitz continuity of $f$, we have
\begin{equation}
    \begin{aligned}
        &\mathbb{E}\Bigg[\sup_{A\in \mathcal{A}_M}\bigg|\sum_{m=1}^Mr_m\Big[-f(X_n^{(m)},A(X_n^{(m)}))\Delta t+\hat{Q}_n(X_n^{(m)},A(X_n^{(m)})+g(X_N^{(m),n,A})\\
        &\qquad\qquad -\big(-f(X_n^{(m)},0)\Delta t+\hat{Q}_n(X_n^{(m)},0)+g(X_N^{(m),n,0})\big)\Big]\bigg|\Bigg]\\
&\leq [f]_L\Delta t\mathbb{E}\Bigg[\sup_{A\in \mathcal{A}_M}\Big|\sum_{m=1}^Mr_mA(X_n^{(m)})\Big|\Bigg]+\mathbb{E}\Bigg[\sup_{A\in \mathcal{A}_M}\Big|\sum_{m=1}^Mr_m\Big(\hat{Q}_n(X_n^{(m)},A(X_n^{(m)})-\hat{Q}_n(X_n^{(m)},0)\Big)\Big|\Bigg]\\
&\qquad\qquad +\mathbb{E}\Bigg[\sup_{A\in\mathcal{A}_M}\bigg|\sum_{m=1}^Mr_m(g(X_N^{(m),n,A})-g(X_N^{(m),n,0}))\bigg|\Bigg]\\
&\leq ([f]_L\Delta t+\eta_M\gamma_M)\mathbb{E}\Bigg[\sup_{A\in \mathcal{A}_M}\Big|\sum_{m=1}^Mr_mA(X_n^{(m)})\Big|\Bigg]
+\mathbb{E}\Bigg[\sup_{A\in\mathcal{A}_M}\Big|\sum_{m=1}^Mr_m\big(g(X_N^{(m),n,A})-g(X_N^{(m),n,0})\big)\Big|\Bigg]\,.\label{g-g}
    \end{aligned}
\end{equation}

Moreover,
\begin{equation}
    \begin{aligned}
        \mathbb{E}[\sup_{A\in \mathcal{A}_M}|\sum_{m=1}^Mr_mA(X_k^{(m)})|]&\leq\gamma_M\mathbb{E}[\sup_{\|v\|_2\leq \frac{1}{R}}|\sum_{m=1}^Mr_m(v^TX_k^{(m)})_+|]\\
&\leq \gamma_M\mathbb{E}[\sup_{\|v\|_2\leq \frac{1}{R}}|\sum_{m=1}^Mr_m(v^TX_k^{(m)})|]\,,
    \end{aligned}
\end{equation}
where $R > 0$ is a bound for the state space(more details can be seen in \cite{bach2017breaking}). By the Cauchy-Schwarz inequality, we have
\begin{equation}
    \begin{aligned}
        \mathbb{E}[\sup_{A\in \mathcal{A}_M}|\sum_{m=1}^Mr_mA(X_k^{(m)})|]\leq\gamma_M\mathbb{E}\bigg[\sup_{\|v\|_2\leq \frac{1}{R}}\Big|\sum_{m=1}^Mr_m(v^TX_k^{(m)})\Big|\bigg]\leq \frac{\gamma_M}{R}\sqrt{\mathbb{E}\Big[\sum_{m=1}^Mr_mX_k^{(m)}\Big]^2}\leq \gamma_M\sqrt{M}\,.\label{bound of A}
    \end{aligned}
\end{equation}

For the last term in the right of the inequation (\ref{g-g})
\begin{equation}
    \begin{aligned}
        &|g(X_N^{(m),n,A})-g(X_N^{(m),n,0})|\leq[g]_L|X_N^{(m),n,A}-X_N^{(m),n,0}|
\leq [g]_L\rho_M\Big(|X_{N-1}^{(m),n,A}-X_{N-1}^{(m),n,0}|+|A(X_{N-1}^{(m),n,A})|\Big)\\
&\leq [g]_L\rho_M(|\rho_M(|X_{N-2}^{(m),n,A}-X_{N-2}^{(m),n,0}|+|A(X_{N-2}^{(m),n,A}|)+|A(X_{N-1}^{(m),n,A})|)\\
&\leq [g]_L(\Pi_{k=n+2}^N\rho_M(\rho_M(|X_n-X_n|+|A(X_n)|)+\Pi_{k=n+2}^N\rho_M|A(X_k^{(m),n,A}|+\cdots+\rho_M|A(X_{N-1}^{(m),n,A})|)\\
&=[g]_L(\rho_M^{N-n}|A(X_n^{(m)})|+\rho_M^{N-n-1}|A(X_k^{(m),n,A})|+\cdots+\rho_M|A(X_{N-1}^{(m),n,A})|)\,.
    \end{aligned}
\end{equation}

Then,
\begin{equation}
    \begin{aligned}
        &\mathbb{E}\bigg[\sup_{A\in\mathcal{A}_M}\Big|\sum_{m=1}^Mr_m(g(X_N^{(m),n,A})-g(X_N^{(m),n,0}))\Big|\bigg]\\
        & \leq [g]_L(\rho_M^{N-n}+\rho_M^{N-2}+\cdots+\rho_M)\gamma_M\sqrt{M}\\
&=[g]_L\frac{\rho_M-\rho_M^{N-n+1}}{1-\rho_M}\gamma_M\sqrt{M}\,.
    \end{aligned}
\end{equation}

Thus the following holds
\begin{equation}
    \begin{aligned}
        &\mathbb{E}\Bigg[\sup_{A\in \mathcal{A}_M}\bigg|\sum_{m=1}^Mr_m\Big[-f(X_n^{(m)},A(X_n^{(m)}))\Delta t+\hat{Q}_n(X_n^{(m)},A(X_n^{(m)})+g(X_N^{(m),n,A})\\
        &\qquad\qquad-\Big(-f(X_n^{(m)},0))\Delta t+\hat{Q}_n(X_n^{(m)},0)+g(X_N^{(m),n,0})\Big)\Big]\bigg|\Bigg]\\
&\leq ([f]_L\Delta t+\eta_M\gamma_M+[g]_L\frac{\rho_M-\rho_M^{N-n+1}}{1-\rho_M})\gamma_M\sqrt{M}\,.\label{the 2-th}
    \end{aligned}
\end{equation}

Plug (\ref{the 1-th}) and (\ref{the 2-th}) to (\ref{divide two parts}), we have
\begin{equation}
    \begin{aligned}
        \mathbb{E}\Bigg[\sup_{A\in \mathcal{A}_M}\bigg|\frac{1}{M}\sum_{m=1}^Mr_m\Big[-f(X_n^{(m)},A(X_n^{(m)}))\Delta t+\hat{Q}_n(X_n^{(m)},A(X_n^{(m)})+g(X_N^{(m),n,A})\Big]\bigg|\Bigg]\\
\leq \frac{(N-n+1)\|f\|_{\infty}\Delta t+\|g\|_{\infty}}{\sqrt{M}}+\bigg([f]_L\Delta t+\eta_M\gamma_M+[g]_L\frac{\rho_M-\rho_M^{N-n+1}}{1-\rho_M}\bigg)\frac{\gamma_M}{\sqrt{M}}\,. \label{combine 4}
    \end{aligned}
\end{equation}

\textbf{Step 5}.    \quad
Combine (\ref{combine 1}), (\ref{combine 2}),(\ref{combine 3}) and (\ref{combine 4}), the following holds
\begin{equation}
    \begin{aligned}
        \mathbb{E}\varepsilon^{esti}_{n+1}\leq (\sqrt{2}+16)\frac{(N-n+1)\|f\|_{\infty}\Delta t+\|g\|_{\infty}}{\sqrt{M}}+16\Big([f]_L\Delta t+\eta_M\gamma_M+[g]_L\frac{\rho_M-\rho_M^{N-n+1}}{1-\rho_M}\Big)\frac{\gamma_M}{\sqrt{M}}\,.
    \end{aligned}
\end{equation}

This completes the proof of Lemma 1.

\section{Appendix 5} \label{proof of lemma 2}
The \textbf{proof of Lemma \ref{lemma 2}}.

    We divide the error into two parts
    \begin{equation}
        \begin{aligned}
            \varepsilon_{n+1}^{approx}=&\inf_{A\in \mathcal{A}_M}\big[\mathbb{E}_M[-f(X_n,A(X_n))\Delta t+\hat{Q}_n(X_n,A(X_n))+g(X_N^{n,A})]\\
            &\qquad-\mathbb{E}_M[-f(X_n,a^{opt}(X_n))\Delta t+Q_{n}(X_n,a^{opt}(X_n))]\\
&+\mathbb{E}_M[-f(X_n,a^{opt}(X_n))\Delta t+Q_{n}(X_n,a^{opt}(X_n))]\\
&\qquad -\inf_{A\in\mathbb{A}^\mathcal{X}}\mathbb{E}_M[-f(X_n,A(X_n))\Delta t+\hat{Q}_n(X_n,A(X_n))+g(X_N^{n,A})]\,.\label{approx two parts}
        \end{aligned}
    \end{equation}

\textbf{Step 1}. \quad
For the first term, take $A\in \mathcal{A}_M$,
\begin{equation}
    \begin{aligned}
        &\mathbb{E}_M\bigg[-f(X_n,A(X_n))\Delta t+\hat{Q}_n(X_n,A(X_n))+g(X_N^{n,A})\bigg]-\mathbb{E}_M\Big[-f(X_n,a^{opt}(X_n))\Delta t+Q_{n}(X_n,a^{opt}(X_n))\Big]\\
&\leq [f]_L\Delta t\mathbb{E}_M|A(X_n)-a^{opt}(X_n)|+\mathbb{E}_M\Big[\Big|\hat{Q}_n(X_n,A(X_n))+g(X_N^{n,A})-Q_n(X_n,a^{opt}(X_n)\Big|\Big]\,.
\label{1-th part}
    \end{aligned}
\end{equation}

\textbf{Step 2}. \quad
For the second term,
\begin{equation}
    \begin{aligned}
        & \mathbb{E}_M\big[-f(X_n,a^{opt}(X_n))\Delta t+Q_{n}(X_n,a^{opt}(X_n))\big]-\inf_{A\in\mathbb{A}^\mathcal{X}}\mathbb{E}_M\big[-f(X_n,A(X_n))\Delta t+\hat{Q}_n(X_n,A(X_n))+g(X_N^{n,A})\big]\\
&\leq  \inf_{A\in\mathbb{A}^\mathcal{X}}\Big\{[f]_L\Delta t\mathbb{E}_M|A(X_n)-a^{opt}(X_n)|+\mathbb{E}_M\Big[\Big|\hat{Q}_n(X_n,A(X_n))+g(X_N^{n,A})-Q_n(X_n,a^{opt}(X_n)\Big|\Big]\Big\}\\
&\leq  \inf_{A\in\mathcal{A}_M}\Big\{[f]_L\Delta t\mathbb{E}_M|A(X_n)-a^{opt}(X_n)|+\mathbb{E}_M\Big[\Big|\hat{Q}_n(X_n,A(X_n))+g(X_N^{n,A})-Q_n(X_n,a^{opt}(X_n)\Big|\Big]\Big\}\,.\\
&\quad(the \,\,class\,\, of\,\, \mathcal{A}_M\,\, is\,\, not\,\, dense\,\, in\,\, the\,\, set\,\, \mathbb{A}^X\,\, of\,\, all\,\, Borelian\,\, functions)
\label{2-th part}
    \end{aligned}
\end{equation}

\textbf{Step 3}. \quad
Plugging (\ref{1-th part}) and (\ref{2-th part}) into (\ref{approx two parts}), we have
\begin{equation}
    \begin{aligned}
        \varepsilon_{n+1}^{approx}\leq &\inf_{A\in\mathcal{A}_M}\Big\{2[f]_L\Delta t\mathbb{E}_M\Big[\Big|A(X_n)-a^{opt}(X_n)\Big|\Big]+2\mathbb{E}_M\Big[\Big|\hat{Q}_n(X_n,A(X_n))+g(X_N^{n,A})-Q_n(X_n,a^{opt}(X_n))\Big|\Big]\Big\}\,.
    \end{aligned}
\end{equation}

\end{appendices}

\end{document}